\documentclass[a4paper,11pt]{article}
\usepackage{latexsym}
\usepackage{amsmath,amssymb,amsfonts,amsthm}
\usepackage{hyperref}
\usepackage{subcaption}
\usepackage{empheq}
\usepackage{float}
\usepackage{xcolor}
\hypersetup{
    colorlinks=true,
    citecolor=blue,
    linkcolor=red,
    urlcolor=green,
}
\usepackage[utf8]{inputenc}
\usepackage{tikz}
\usetikzlibrary{positioning}
\usetikzlibrary{calc,shapes.callouts,shapes.arrows}
\usepackage{pxpgfmark}
\title{
Geodesic Property of Greedy Algorithms
for Optimization Problems
on Jump Systems and Delta-matroids
}
\author{Norito Minamikawa%
\footnote{
Department of Economics and Business Administration,
Tokyo Metropolitan University,
Tokyo 192-0397, Japan
  \texttt{n.minamikawa@tmu.ac.jp}}
}
\setlength{\textheight}{230mm}
\setlength{\textwidth}{160mm}
\setlength{\oddsidemargin}{-2mm}
\setlength{\evensidemargin}{-2mm}
\setlength{\topmargin}{-3mm}
\newtheorem{thm}{Theorem}
\newtheorem{lem}[thm]{Lemma}
\newtheorem{rmk}[thm]{Remark}
\newtheorem{pro}[thm]{Proposition}
\newtheorem{cor}[thm]{Corollary}
\newtheorem{cla}{Claim}
\newtheorem{example}[thm]{Example}

\newcommand{\inc}{{\rm inc}}
\newcommand{\norm}[1]{\lVert#1\rVert}
\newcommand{\supp}[1]{ {\rm supp} ( #1 ) }
\newcommand{\dom}{{\rm dom}}

\newcommand{\univec}{U}
\def\AlgorithmText#1{\rm \textbf{Algorithm} #1} 
\numberwithin{equation}{section}
\numberwithin{thm}{section}
\begin{document}

\maketitle

\begin{abstract}
The concept of jump system, introduced by Bouchet and Cunningham (1995), 
is a set of integer points satisfying a certain exchange property.
We consider the minimization of
a separable convex function on a jump system. 
It is known that the problem can be solved 
by a greedy algorithm.
In this paper, we are interested in 
whether the greedy algorithm has the geodesic property, which means that
the trajectory of solution generated by the algorithm
is a geodesic from the initial solution to a nearest optimal solution.
 We show that a special implementation of the greedy algorithm
enjoys the geodesic property, while the original algorithm does not.
 As a corollary to this, we present a new greedy algorithm
for linear optimization on a delta-matroid and show that the algorithm
has the geodesic property.
\end{abstract}

\section{Introduction}
\label{sec:intro}

 The concept of jump system, introduced by Bouchet and Cunningham \cite{Bouchet95},  
is defined as a set of integer lattice points with a certain exchange property.
 This concept is a generalization of matroid \cite{Lawler76}, delta-matroid \cite{Bouchet87}, 
integral base polyhedron \cite{Fujishige05}, and integral bisubmodular polyhedron
\cite{Bouchet95}.
 A non-trivial example of jump system can be obtained from
the set of degree sequences of subgraphs of a given graph.
 A precise definition of jump system is given in  Section~\ref{subsec:SC1}.

In this paper, 
we consider the minimization of a separable convex function 
on a jump system:
\begin{quote}
    {\bf (JSC)} Minimize $f(x) = \sum_{i=1}^{n}{f_i(x(i))}$ subject to $x \in J$,
\end{quote}
where $f_i: \mathbb{Z} \to \mathbb{R}$ is a univariate convex function
for each $i = 1, 2, \ldots, n$
and $J \subseteq \mathbb{Z}^n$ is a jump system.
 The problem (JSC) has various examples \cite{Berczi12,Ibaraki88,Katoh13}, including
 the minsquare graph factor problem \cite{Apollonio04,Apollonio09} described below. 

\medskip 

\noindent \textbf{Minsquare graph factor problem.} 
Suppose that we are given an undirected graph $G = (V,E)$ 
that may contain loops and parallel edges
and a positive integer $k$. 
In the minsquare graph factor problem, 
we aim at finding a subgraph $H$ with exactly $k$ edges
that minimizes $\sum_{v \in V} x_H(v)^2$, 
where $x_H(v)$ is the degree of the vertex $v$ 
in the subgraph (i.e., the number of edges in $H$ incident to $v$).

We may also consider a variant of the minsquare graph factor problem, 
where we are given integers $d(v)$ for $v \in V$,
and find a subgraph $H$ of $G$ (without any constraint on the number of edges in $H$) 
that minimizes the function $f_{\rm MGF}(x) = \sum_{v \in V} { \{ x_H(v)-d(v)\} }^2$.
This variant can be formulated as the minimization of the separable convex function
 $f_{\rm MGF}$ on the jump system $J_{\rm MGF}$ given by
\[
    J_{\rm MGF} = \{x_H \mid H \mbox{ is a subgraph of } G\}.
\]

\medskip

 A greedy algorithm for finding an optimal solution of
the problem (JSC) is proposed by Ando et al.~\cite{Ando95}.
 It is shown
that a vector $x^* \in J$ is an optimal solution
of (JSC) if and only if 
\[
    f(x^*) \le f(x^*+s+t) \qquad (\forall s, t \in \univec \cup \{ {\bf 0} \}
    \mbox{ with }x^* + s + t \in J),
\]
where $U$ is the set of unit vectors, i.e.,
the set of $\{+1, 0, -1\}$-vector with a single nonzero component.
 Hence, an optimal solution of (JSC)
with a bounded jump system $J$ can be found
in a finite number of iterations by iteratively 
updating the current vector $x \in J$
to $x+s+t$ for some $s, t \in \univec \cup \{ {\bf 0} \}$
satisfying $x+s+t \in J$ and $f(x+s+t) < f(x)$.

 To obtain a nontrivial upper bound for the number of iterations,
a clever choice of the unit vector $s$ is made in the greedy algorithm
of \cite{Ando95}.
 In each iteration of the algorithm, the vector $s \in U$ 
is selected that minimizes
the value $f(x+s)$ under the condition that
$x+ s+ t \in J$ and $f(x+ s+ t) < f(x)$ hold for some $t \in U \cup \{{\bf 0}\}$.
 The ``greedy'' choice of $s$ makes it possible to
obtain a non-trivial upper bound
in terms of the ``size'' of 
a jump system \cite{Ando95}
(see also Theorem~\ref{th:iteration_algo}).
 
In this paper, we are interested in 
whether the greedy algorithm for (JSC) in \cite{Ando95}
has the geodesic property; the geodesic property
means that
the trajectory of solution generated by the algorithm
is a geodesic from the initial solution to a nearest optimal solution.
 More precisely, we say that
an iterative algorithm for a discrete optimization problem
has the geodesic property
if the equation 
\[
    \mu(x+d) =\mu(x)-\norm{d}
\]
holds whenever the current vector $x$ is updated to $x+d$,
where $\norm{\cdot}$ is an appropriately chosen norm
such as L$_1$-norm and L$_\infty$-norm
and $\mu(y)$ denotes the distance (with respect to the norm $\norm{\cdot}$)
from a vector $y$ to a nearest optimal solution.
 By definition, the geodesic property guarantees the efficiency of an algorithm
in finding an optimal solution, and makes it possible
to provide an upper bound for the number of iterations of the algorithm
in terms of the distance between the initial solution
and an optimal solution;
see Section~\ref{subsec:related} for more accounts on the geodesic property
of greedy algorithms for discrete optimization problems.

 It turns out that 
the greedy algorithm in \cite{Ando95} 
does not enjoy the geodesic property in its general form;
we provide in Section \ref{subsec:SC1} a concrete example of (JSC)
to show this fact.
 Our contribution in this paper is to devise
a special implementation of the greedy 
algorithm that enjoys the geodesic property
with respect to L$_1$-norm
 (see Section \ref{subsec:our_algo}).
 The modification made in our implementation is quite simple:
in each iteration of the greedy algorithm
we first select the vector $s \in U$ in a greedy manner, as mentioned above,
and then take $t ={\bf 0}$ if $x+s \in J$;
otherwise,
we also select $t \in U$ in a greedy manner,
i.e., we choose $t \in U$
that minimizes the value $f(x+s+t)$ under the constraint
$x + s +t \in J$. 

 While the modification in our implementation is simple,
our proof of the geodesic property is quite long
and therefore divided into two sections;
Section \ref{sec:proofs} gives the outline of the whole proof,
and Section \ref{sec:proof_lemmas} provides detailed proofs
of some key lemmas used in Section \ref{sec:proofs}.

 In the proof we need to prove that the following equations
hold in each iteration of the refined greedy algorithm:
\begin{align*}
& \mu(x+s) = \mu(x) - \norm{s}_1\ ( = \mu(x) - 1),\\
& \mu(x+s+t) = \mu(x+s) - \norm{t}_1.
\end{align*} 
 The former equation is relatively easy to prove.
 Indeed, a slight modification of the proof for some related statement
in \cite{Shioura07}
can be used to prove this equation;
a detailed proof is given in Section \ref{sec:ap}
in Appendix.
 In addition, the latter equation is trivial if $t$ is the zero vector.

 In contrast, the equation $\mu(x+s+t) = \mu(x+s) - \norm{t}_1 \ 
(= \mu(x+s) - 1)$ with $t \in U$ 
is much harder to prove, and
a proof by contradiction is given in this paper.
 The following is a brief outline of the proof.
 Assume that the equation does not hold
for some $x = x_0$, $s =s^* \in U$, and $t = - \chi_h$.
 Then, it holds that $x^*(h) \ge x_0(h)+s(h)$ 
for every optimal solution $x^*$ of (JSC) that is nearest to $x_0$.
 With this fact, we can show that the trajectory of
the current vector generated by the algorithm with the initial vector $x_0$
contains two consecutive vectors $x', x''$ such that
$x'(h) < x_0(h)+s^*(h) \le x''(h)$.
 By using this inequality and 
key lemmas to be proved in Section~\ref{sec:proof_lemmas},
we derive a contradiction.

 As a corollary to the geodesic property of the greedy algorithm  for (JSC),
we present a new greedy algorithm
for linear optimization on delta-matroids that 
has the geodesic property (see Section \ref{subsec:sp_delta}).
 The concept of delta-matroid is introduced in \cite{Bouchet87,Chandrasekaran88,Dress86}
as a generalization of matroid,
and various greedy algorithms have been proposed
for linear optimization on delta-matroids \cite{Bouchet87,Chandrasekaran88,Kabadi05}.
 It is known \cite{Bouchet95} that minimization of a linear function on a delta-matroid
can be seen as a special case of the problem (JSC)
with a jump system consisting of $\{0,1\}$-vectors. 
 Hence, we can obtain another greedy algorithm for delta-matroids
by specializing the greedy algorithm for (JSC),
which also satisfies the geodesic property.

Throughout the paper, let $n$ be a positive integer with $n \geq 2$,
and put $N = \{1, 2, \ldots, n \}$.
We denote by $\mathbb{R}$ the set of real numbers, and 
by $\mathbb{Z}$ (resp., by $\mathbb{Z}_+$) the sets of integers (resp., non-negative integers).
We also denote by ${\bf 0}$ the zero vector in $\mathbb{R}^n$.
For $x = (x(i) \mid i \in N) \in \mathbb{R}^n$, we define
\[
    x(V) = \sum_{i \in V}x(i), \quad
    \norm{x}_1 = \sum_{i \in N}|x(i)|.
\]

\section{Preliminaries}
\label{sec:SC}

 We explain the two discrete optimization problems discussed
in this paper, 
separable convex function minimization on jump systems
and linear optimization on delta-matroids,
and present some existing algorithm for the problems.
 We also review several discrete optimization problems
for which certain greedy algorithms enjoy the geodesic property.

\subsection{Separable Convex Minimization on Jump Systems}
\label{subsec:SC1}

 A nonempty set $J \subseteq \mathbb{Z}^N$ of
integral vectors is called a jump system if it satisfies the condition (J-EXC):

\medskip

\noindent {\bf (J-EXC)}
For any $x,y \in J$ and for any $s \in \inc(x, y)$, 
if $x + s \notin J$ then there exists
$t \in \inc(x + s, y)$ such that $x + s + t \in J$,

\medskip

\noindent 
where $\chi_i \in \{0, 1\}^n$  $(i=1,2,\ldots, n)$
is  the $i$-th characteristic vector,
$U$ is the set of unit vectors, i.e.,
\[
 \univec = \{ + \chi_i, -\chi_i \mid i \in N \},
\] 
and 
\begin{align*}
   \inc (x,y) & = \{s \in \univec \mid \norm{(x + s) - y}_1 = \norm{x-y}_1 - 1 \}.
\end{align*}
 By definition, we have  
\[
   \inc (x,y) = \inc(\textbf{0}, y-x)
  = \{+\chi_i \mid i \in N,\ y(i)-x(i)>0\} \cup \{-\chi_i \mid i \in N,\ y(i)-x(i)<0\}.
\]

We consider the problem (JSC) of 
minimizing a separable convex function on a jump system:
\begin{quote}
{\bf (JSC)} Minimize $f(x) = \sum_{i=1}^{n}{f_i(x(i))}$ subject to $x \in J$,
\end{quote}
where $f_i: \mathbb{Z} \to \mathbb{R}$ is a univariate convex function
for each $i = 1, 2, \ldots, n$
and $J \subseteq \mathbb{Z}^n$ is a jump system.

It is known that the global optimality of the problem (JSC) is 
characterized by a local optimality:

\begin{thm}[{\cite[Corollary 4.2]{Ando95}}]
\label{th:min_SC}
A vector $x^* \in J$ is an optimal solution of {\rm (JSC)} if and only if 
$f(x^*) \leq f(x^* + s + t)$ for all $s, t \in \univec \cup \{ {\bf 0} \}$
such that $x^* + s + t \in J$.
\end{thm}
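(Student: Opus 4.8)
The plan is to prove Theorem~\ref{th:min_SC} by establishing both directions, with the nontrivial direction being a descent-type argument that uses the exchange axiom (J-EXC) together with convexity of the separable objective. The ``only if'' direction is immediate: if $x^*$ is optimal, then in particular $f(x^*) \le f(y)$ for every $y \in J$, so in particular for every $y$ of the form $x^* + s + t$ with $s, t \in U \cup \{\mathbf{0}\}$ lying in $J$. All the work is in the converse: assuming the local optimality condition, we must show $f(x^*) \le f(y)$ for an arbitrary $y \in J$.

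For the ``if'' direction, I would argue by contradiction. Suppose $x^*$ satisfies the local condition but is not optimal, and pick a counterexample $y \in J$ with $f(y) < f(x^*)$ that is \emph{nearest} to $x^*$ in L$_1$-distance (among all such bad $y$); if there are ties, refine by some secondary criterion if needed. Since $y \ne x^*$, there is an incremental direction $s \in \inc(x^*, y)$, say $s = +\chi_i$ with $y(i) > x^*(i)$ (the case $s = -\chi_i$ is symmetric). Consider $x^* + s$. If $x^* + s \in J$, set $t = \mathbf{0}$; otherwise apply (J-EXC) to get $t \in \inc(x^* + s, y)$ with $x^* + s + t \in J$. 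In either case $x^* + s + t \in J$, so the local optimality hypothesis gives $f(x^* + s + t) \ge f(x^*)$. The goal is to derive from convexity that $f(y') < f(x^*)$ for $y' := x^* + s + t$ or for some vertex strictly L$_1$-closer to $x^*$ than $y$ is, contradicting minimality of $\|y - x^*\|_1$.

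The key computational step is the convexity estimate. Because $f$ is separable with each $f_i$ convex, moving one coordinate of $x^*$ a unit step \emph{toward} $y$ changes $f$ by an amount bounded above by the corresponding coordinate-difference $f_i(y(i)) - f_i(y(i) - 1)$ style quantity evaluated near $y$; symmetrically, the per-coordinate marginal cost of the step from $x^*$ is at most the marginal gain available near $y$. Concretely, I would compare the two walks from $x^*$ to $y$: one that starts with the step $s$ (then $t$) and then proceeds, versus the ``telescoping'' bound on $f(y) - f(x^*)$ as a sum of per-coordinate convex increments. Writing $\delta := f(x^* + s + t) - f(x^*) \ge 0$, convexity of the relevant $f_i$'s (at most two coordinates are touched by $s + t$, and both moves are toward $y$) yields $f(y) - f(x^* + s + t) \le f(y) - f(x^*) - \delta' $ for an appropriate nonnegative $\delta'$, hence $f(y') < f(x^*)$ where $y' = x^* + s + t$, while $\|y' - y\|_1 < \|x^* - y\|_1$ since $s$ and $t$ are both incremental toward $y$. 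This contradicts the choice of $y$ as a nearest bad vector, completing the proof.

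The main obstacle I anticipate is handling the case $x^* + s \notin J$ cleanly: here $t$ is forced by (J-EXC) to be some incremental direction toward $y$, but it need not equal $-s$ or lie in the ``same'' coordinate, so one must verify that $\|x^* + s + t - y\|_1 = \|x^* - y\|_1 - 2$ (genuinely closer by two) and, crucially, that the convexity bookkeeping still gives $f(x^* + s + t) \le f(x^*) + [\text{amount of progress toward } f(y)]$ even when $s$ and $t$ act on two \emph{different} coordinates. This requires a careful two-coordinate convexity lemma: if $a,b$ are the two coordinates moved, then $f_a(x^*(a)+s(a)) - f_a(x^*(a))$ plus $f_b(x^*(b)+t(b)) - f_b(x^*(b))$ is at most $[f_a(y(a)) - f_a(y(a) - s(a))] + [f_b(y(b)) - f_b(y(b) - t(b))]$, each term nonpositive by convexity since we are stepping toward $y$. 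Assembling this bound with the local optimality inequality $f(x^*+s+t) \ge f(x^*)$ forces the progress terms to be nonnegative, and combined with $f(y) < f(x^*)$ this yields the strict inequality $f(x^*+s+t) < f(x^*)$ needed for the contradiction—hence this two-coordinate estimate is the technical heart of the argument.
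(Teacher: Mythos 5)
First, a point of comparison: the paper does not prove Theorem~\ref{th:min_SC} at all --- it is quoted from \cite[Corollary~4.2]{Ando95} --- so there is no in-paper proof to match your argument against. Judged on its own terms, your ``only if'' direction is fine, but the ``if'' direction has a genuine gap in its final step, and it is exactly the step you flag as the technical heart.

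After obtaining $x^*+s+t\in J$ from (J-EXC) and invoking the local hypothesis $f(x^*+s+t)\ge f(x^*)$, the two-coordinate convexity estimate you describe is Proposition~\ref{pr:sep}(i) applied twice, and it reads
\[
 f(x^*)+f(y)\ \ge\ f(x^*+s+t)+f(y-s-t).
\]
Combined with $f(x^*+s+t)\ge f(x^*)$ this yields $f(y-s-t)\le f(y)<f(x^*)$, i.e.\ information about the point $y-s-t$; it cannot yield the strict inequality $f(x^*+s+t)<f(x^*)$ you claim to extract, since that would contradict the very hypothesis you just substituted in --- the inequality simply does not run in that direction. The only way to convert $f(y-s-t)<f(x^*)$ into a contradiction with the minimality of $\norm{y-x^*}_1$ is to know that $y-s-t\in J$, and (J-EXC) gives no such guarantee: it is one-sided, producing $x^*+s+t\in J$ while saying nothing about $y-s-t$. (In the paper's own $J_1$, with $x^*=(1,1)$, $y=(3,0)$, $s=+\chi_1$, one has $x^*+s\in J_1$ but $y-s=(2,0)\notin J_1$.) This asymmetry is precisely what makes the local-to-global statement for jump systems nontrivial, in contrast to jump M-convexity, where the exchange property explicitly involves $f(y-s-t)$ and hence its feasibility. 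Your auxiliary remark that $\norm{y'-y}_1<\norm{x^*-y}_1$ ``contradicts the choice of $y$ as a nearest bad vector'' is also a non sequitur: $y$ was chosen nearest to $x^*$, and $y'=x^*+s+t$ is not a bad vector (its value is at least $f(x^*)$ by hypothesis). Closing the argument requires an iterated exchange argument with carefully chosen extremal witnesses, of the kind carried out in the Appendix for Theorem~\ref{th:ex_min_cut}, rather than a single application of (J-EXC) plus convexity.
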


 Based on this optimality condition,
the following greedy algorithm for (JSC) is proposed by
Ando et al. \cite{Ando95}.
 In each iteration of the greedy algorithm, the current vector
$x$ is updated to $x + s^* + t^*$ by using two vectors
$s^* \in \univec$ and $t^* \in \univec \cup \{ {\bf 0} \}$
satisfying
\begin{align}
&    s^* \in \arg \min \{ f(x + s) \mid s \in \univec,~ \exists t \in \univec \cup \{ {\bf 0} \} \notag \\
& \hspace*{50mm} \mbox{ such that } x + s + t \in J \mbox{ and } f(x + s + t) < f(x) \},
    \label{eq:descent_s}
\\
& x + s^* + t^* \in J, \qquad f(x + s^* + t^*) < f(x).
    \label{eq:descent_t}
\end{align}
It is assumed that
an initial vector $x_0 \in J$ is given in advance.

\medskip

\noindent \AlgorithmText{\textsc{JSC-Greedy}}\\
{\bf Step 0:\ } Let $x_0 \in J$ be an initial vector. Set $x = x_0$.\\
{\bf Step 1:\ } If $f(x) \leq f(x + s + t)$ for all $s, t \in \univec \cup \{ {\bf 0 } \}$ with $x + s + t \in J$, \\
\phantom{\bf Step 1:\ } then output $x$ and stop.\\
{\bf Step 2:\ } Find $s^* \in \univec$ and $t^* \in \univec \cup \{ {\bf 0} \}$
 satisfying \eqref{eq:descent_s} and \eqref{eq:descent_t}.
\\
{\bf Step 3:\ } Set $x := x + s^* + t^*$, and go to Step 1.

\medskip

\noindent
Note that the greedy algorithm has flexibility 
in the choice of vector $t^*$ in Step~2;
in particular, $t^*$ is not necessarily a minimizer of
the value $f(x + s^* + t^*)$ under the condition $x + s^* + t^* \in J$.

The number of iterations required by the greedy algorithm can be bounded
by the ``size'' of the jump system $J$.

\begin{thm}[{\cite[Theorem 6.1]{Ando95}}]
\label{th:iteration_algo}
The number of iterations required by Algorithm \textsc{JSC-Greedy} is bounded by
\begin{equation}
\Psi(J)
= 
    \sum_{i \in N}\{  \max_{x \in J} x(i) -  \min_{y \in J} y(i)\}.
\label{eqn:PhiJ}
\end{equation}
\end{thm}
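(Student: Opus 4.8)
The plan is to deduce the bound from an $\mathrm{L}_1$-distance estimate. Two routine points come first: $f(x)$ strictly decreases at every iteration by \eqref{eq:descent_t} and $J$ is finite, so Algorithm \textsc{JSC-Greedy} terminates, and by Theorem~\ref{th:min_SC} it terminates at an optimal solution. Write $x_0,x_1,\dots,x_T$ for the generated trajectory, so $x_{k+1}=x_k+s^*+t^*$ with $s^*+t^*\neq{\bf 0}$ by \eqref{eq:descent_t}, whence $\norm{x_{k+1}-x_k}_1\ge 1$. The key reduction is that it suffices to exhibit a \emph{single} optimal solution $x^\star$ of (JSC) such that
\[
   \norm{x_{k+1}-x^\star}_1=\norm{x_k-x^\star}_1-\norm{x_{k+1}-x_k}_1 \qquad (0\le k<T),
\]
i.e.\ the whole trajectory runs along a shortest $\mathrm{L}_1$-path towards $x^\star$. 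Indeed, summing telescopes to $\sum_{k=0}^{T-1}\norm{x_{k+1}-x_k}_1=\norm{x_0-x^\star}_1-\norm{x_T-x^\star}_1\le\norm{x_0-x^\star}_1$, and since the left side is at least $T$ while $x_0,x^\star\in J$ gives $\norm{x_0-x^\star}_1=\sum_{i\in N}|x_0(i)-x^\star(i)|\le\sum_{i\in N}(\max_{x\in J}x(i)-\min_{y\in J}y(i))=\Psi(J)$, we obtain $T\le\Psi(J)$.

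To produce such an $x^\star$, I would argue inductively on $k$, carrying along an optimal solution $x^\star$ (possibly revised as we go) for which the current move heads towards it, i.e.\ $s^*\in\inc(x_k,x^\star)$ and then $t^*\in\inc(x_k+s^*,x^\star)$ — which is exactly the displayed identity at index $k$. The tools are the exchange property (J-EXC) of $J$ and Theorem~\ref{th:min_SC}: given an increment in $\inc(x_k,x^\star)$ or $\inc(x_k+s^*,x^\star)$ that $J$ does not respect, (J-EXC) supplies a correction to the target, and Theorem~\ref{th:min_SC} is what certifies that the corrected vector is still optimal. The point at which the greedy rule \eqref{eq:descent_s} is decisive is the claim that $s^*$ can be taken towards some optimal solution at all: writing $s^*=+\chi_i$, if $f_i(x_k(i)+1)\ge f_i(x_k(i))$ then, by separability of $f$ and convexity of the $f_j$, the necessarily nonzero direction $t^*$ that turns $x_k+s^*+t^*$ into a strict descent (and which, one checks, acts on a coordinate other than $i$) is itself an admissible primary direction at $x_k$ with $f(x_k+t^*)<f(x_k+s^*)$, contradicting the minimality in \eqref{eq:descent_s}; hence in fact $f_i(x_k(i)+1)<f_i(x_k(i))$, and from this, together with (J-EXC) and Theorem~\ref{th:min_SC}, one extracts an optimal solution whose $i$-th coordinate is at least $x_k(i)+1$.

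The step I expect to be the main obstacle is handling the secondary move $t^*$. Algorithm \textsc{JSC-Greedy} constrains $t^*$ only through \eqref{eq:descent_t}, so $t^*$ need not point towards the optimal solution currently tracked, and re-selecting $x^\star$ so that $t^*\in\inc(x_k+s^*,x^\star)$ must be done \emph{without} disturbing the geodesic identity already established for $x_0,\dots,x_k$: one must propagate a single compatible optimal solution through an entire iteration in which both $s^*$ and an arbitrary admissible $t^*$ are applied. This is precisely the bookkeeping that is delicate enough to occupy the detailed part of the analysis, and it is where (J-EXC), the convexity of the $f_i$, and Theorem~\ref{th:min_SC} must be used in concert. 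Note also that insisting, at every $x_k$, on the \emph{nearest} optimal solution rather than on a single fixed compatible one is what makes the stronger geodesic property fail for the original algorithm, as mentioned in the introduction.
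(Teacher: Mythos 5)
The statement is quoted from Ando et al.\ \cite{Ando95} and the paper does not reprove it; it only records (immediately after the statement) that the bound in \cite{Ando95} is obtained from the monotonicity property of Theorem~\ref{th:monotone-s*}, i.e.\ that $f(x_k+s^*_k)-f(x_k)$ is nondecreasing in $k$, which together with separable convexity limits how often each coordinate can serve as the greedy primary direction. Your proposal takes a completely different route, and unfortunately the route cannot work: your ``key reduction'' asserts that the whole trajectory of \textsc{JSC-Greedy} is an $\mathrm{L}_1$-geodesic towards a single fixed optimal solution $x^\star$, and this is exactly the property that the paper's Example~\ref{ex:SC1} refutes. There $J_1=\{(0,0),(1,0),(3,0),(1,1),(2,1)\}$, $f_1(x)=-2x(1)-x(2)+6$, the optimal solution $x^\star=(3,0)$ is \emph{unique} (so there is no freedom to ``re-select'' the target), and a legitimate first iteration from $x_0=(0,0)$ takes $s^*=+\chi_1$, $t^*=+\chi_2$ to $x_1=(1,1)$, for which $\norm{x_1-x^\star}_1=3=\norm{x_0-x^\star}_1\neq\norm{x_0-x^\star}_1-2$. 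Your displayed identity therefore fails already at $k=0$ for every admissible choice of $x^\star$, so the sufficient condition you reduce the theorem to is simply false for Algorithm \textsc{JSC-Greedy} (it is what the refined algorithm of Section~\ref{subsec:our_algo} is designed to restore).

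The failure is not just of the equality but of the whole telescoping strategy: in that example the full trajectory $(0,0)\to(1,1)\to(2,1)\to(3,0)$ has total $\mathrm{L}_1$-length $2+1+2=5$, whereas $\Psi(J_1)=3+1=4$. Hence no argument that bounds the number of iterations by first bounding the total $\mathrm{L}_1$-displacement of the trajectory by $\norm{x_0-x^\star}_1\le\Psi(J)$ can succeed --- the theorem counts \emph{iterations}, and an iteration of length $2$ must be allowed to ``waste'' a unit of displacement. Your second paragraph correctly isolates the troublesome object (the unconstrained secondary move $t^*$), but the conclusion to draw is that this obstacle is fatal to the approach, not merely delicate. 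A correct proof has to use a potential other than distance to an optimum; the one in \cite{Ando95} uses the quantity $f(x_k+s^*_k)-f(x_k)$ of Theorem~\ref{th:monotone-s*}, whose monotonicity together with the convexity of each $f_i$ shows that $s^*=\pm\chi_i$ can occur at a given value of $x(i)$ essentially only once, yielding the per-coordinate width bound $\sum_{i\in N}\{\max_{x\in J}x(i)-\min_{y\in J}y(i)\}$.
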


\noindent
 In \cite{Ando95}, this bound is obtained by using the following
property on the monotonicity of the value $f(x + s^*) - f(x)$.

\begin{thm}[{\cite[Theorem 5.1]{Ando95}}]
\label{th:monotone-s*}
 Let $x_k$ and $s^*_k$ be the vectors $x$ and $s^*$, respectively,
in the $k$-th iteration of Algorithm \textsc{JSC-Greedy}.
 Then, it holds that
$f(x_k +s^*_k) - f(x_k) \le f(x_{k+1} +s^*_{k+1}) - f(x_{k+1})$ 
for every $k \ge 0$. 
\end{thm}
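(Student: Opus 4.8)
\medskip

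\noindent\textbf{Proof proposal.}
The plan is to deduce the inequality from a stronger statement about the set of directions admissible in \eqref{eq:descent_s}. Call $s\in\univec$ an \emph{admissible direction at $x$} if there is $t\in\univec\cup\{{\bf 0}\}$ with $x+s+t\in J$ and $f(x+s+t)<f(x)$; thus $s^*_k$ minimizes $f(x_k+s)$ over the admissible directions at $x_k$. I would show that, whenever Algorithm \textsc{JSC-Greedy} performs the $(k+1)$-st iteration (otherwise there is nothing to prove),
\begin{equation}
f(x_{k+1}+s)-f(x_{k+1}) \ \ge\ f(x_k+s^*_k)-f(x_k)
\qquad\text{for every admissible direction }s\text{ at }x_{k+1}.
\label{eq:star}
\end{equation}
Since $s^*_{k+1}$ is an admissible direction at $x_{k+1}$ attaining the minimum of $f(x_{k+1}+s)-f(x_{k+1})$, taking $s=s^*_{k+1}$ in \eqref{eq:star} gives the assertion.

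First I would record the auxiliary fact that $\alpha_k:=f(x_k+s^*_k)-f(x_k)<0$ for all $k$. Since $x_k$ is not optimal, Theorem~\ref{th:min_SC} provides $\bar s,\bar t\in\univec\cup\{{\bf 0}\}$ with $x_k+\bar s+\bar t\in J$ and $f(x_k+\bar s+\bar t)<f(x_k)$; after possibly interchanging the two vectors we may assume $\bar s\in\univec$ (both cannot be ${\bf 0}$). If $\bar t={\bf 0}$, then $\bar s$ is admissible at $x_k$ with $f(x_k+\bar s)<f(x_k)$; if $\bar s$ and $\bar t$ act on distinct coordinates, then separability forces one of them to be a downhill unit step, which is then admissible at $x_k$ with strictly smaller $f$-value; and if $\bar s=\bar t=\sigma\chi_i$ for some sign $\sigma$, then $f(x_k+2\sigma\chi_i)<f(x_k)$ together with convexity of $f_i$ yields $f(x_k+\sigma\chi_i)<f(x_k)$, with $\sigma\chi_i$ admissible at $x_k$ (witnessed by $t=\sigma\chi_i$); the case $\bar s=-\bar t$ is impossible. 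In every case we obtain an admissible direction $s'$ at $x_k$ with $f(x_k+s')<f(x_k)$, hence $\alpha_k\le f(x_k+s')-f(x_k)<0$ by the greedy choice \eqref{eq:descent_s}.

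To prove \eqref{eq:star} I would argue by contradiction: suppose $s=\sigma_q\chi_q$ is admissible at $x_{k+1}$, witnessed by $t$ (so $z:=x_{k+1}+s+t\in J$ and $f(z)<f(x_{k+1})<f(x_k)$), yet $f(x_{k+1}+s)-f(x_{k+1})<\alpha_k$. Writing $s^*_k=\sigma_p\chi_p$, we have $x_{k+1}=x_k+\sigma_p\chi_p+t^*_k$ with $\norm{x_{k+1}-x_k}_1\le 2$, and I would distinguish cases according to whether $q$ lies in $\supp{\sigma_p\chi_p+t^*_k}$. When $q\in\supp{\sigma_p\chi_p+t^*_k}$, the inequality \eqref{eq:star} follows directly from convexity of $f_q$ after ``undoing'' the overlapping part of the update (exactly as in the auxiliary fact), contradicting the assumption. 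When $q\notin\supp{\sigma_p\chi_p+t^*_k}$ we have $x_{k+1}(q)=x_k(q)$, hence $f(x_k+s)-f(x_k)=f(x_{k+1}+s)-f(x_{k+1})<\alpha_k<0$; applying (J-EXC) to $x=x_k$ and $y=z$ (note $s\in\inc(x_k,z)$, which holds because $z\neq x_{k+1}$, so $t\neq -s$) produces $t'\in\univec\cup\{{\bf 0}\}$ with $x_k+s+t'\in J$, where $t'$ may be taken in $\{{\bf 0},\sigma_p\chi_p,t^*_k,t\}$; using $\alpha_k<0$, $f(x_{k+1})<f(x_k)$, and convexity when coordinates coincide, one checks in each branch that $f(x_k+s+t')<f(x_k)$. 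Thus $s$ is admissible at $x_k$ with $f(x_k+s)<f(x_k)+\alpha_k=f(x_k+s^*_k)$, contradicting the minimality of $s^*_k$ in \eqref{eq:descent_s}.

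The step I expect to be the main obstacle is this last case analysis: combining (J-EXC) with separable convexity is lengthy because $s^*_k,t^*_k,s,t$ may overlap in coordinates in several configurations, (J-EXC) only guarantees one exchange step at a time (so one occasionally has to follow a two-step walk in $J$ while keeping $f$-values under control), and in each branch one must verify both that the direction exhibited at $x_k$ is feasible (in $J$ after at most one exchange step) and that it is strictly cheaper than $x_k+s^*_k$. No analytic input beyond the auxiliary fact $\alpha_k<0$ and the monotonicity of the forward differences $a\mapsto f_i(a+1)-f_i(a)$ seems to be needed.
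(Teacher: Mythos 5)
This theorem is imported verbatim from \cite{Ando95} (Theorem 5.1 there); the paper gives no proof of it, so there is no in-paper argument to compare yours against, and your proposal has to be judged as a self-contained reconstruction. On those terms it is sound. The reduction to the claim that every admissible direction $s$ at $x_{k+1}$ satisfies $f(x_{k+1}+s)-f(x_{k+1})\ge \alpha_k:=f(x_k+s^*_k)-f(x_k)$ is correct, and your auxiliary fact $\alpha_k<0$ is exactly Lemma~\ref{le:x+s'} of the paper (your derivation of it is a minor variant of the one given there). Two points in the case analysis you defer should be made explicit, since they need more than "convexity of $f_q$". In the overlapping case, when $q$ is the coordinate of $t^*_k$ and $t^*_k\ne s^*_k$, the branch $s=t^*_k$ needs $f(x_k+t^*_k)-f(x_k)\ge\alpha_k$, which comes from the greedy minimality of $s^*_k$ in \eqref{eq:descent_s} ($t^*_k$ is admissible at $x_k$, witnessed by $s^*_k$), while the branch $s=-t^*_k$ needs $f(x_k+t^*_k)-f(x_k)<-\alpha_k$, which comes from separability applied to $f(x_k+s^*_k+t^*_k)<f(x_k)$. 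In the non-overlapping case the branches for the exchange direction $t'\in\{{\bf 0},s^*_k,t^*_k,t\}$ all close using these same two bounds: $t'={\bf 0}$ and $t'=s^*_k$ give values below $\alpha_k$ and $2\alpha_k$ respectively; $t'=t^*_k$ uses $f(x_k+t^*_k)-f(x_k)<-\alpha_k$; and $t'=t$ either coincides with $s^*_k$ or $t^*_k$ (the sign condition defining $\inc(x_k+s,z)$ forces this whenever $t$ acts inside $\supp{s^*_k+t^*_k}$) or else acts on a coordinate untouched by $s^*_k+t^*_k$, where $f(x_k+s+t')-f(x_k)=f(z)-f(x_{k+1})<0$ directly. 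With these verifications written out your proof is complete; I see no branch that fails.
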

\noindent 
 We will also use this property in proving the geodesic property
for a variant of  Algorithm \textsc{JSC-Greedy}.

 Algorithm \textsc{JSC-Greedy}  satisfies
the geodesic property partially in some sense.
 For $x \in \mathbb{Z}^n$, we define
$\mu(x)$ as the L$_1$-distance from $x$ to
an optimal solution nearest to $x$,
and $M^*(x)$ as the set of the optimal solutions of (JSC) nearest to $x$.
 That is,
\begin{align}
    \mu(x) &= \min\{\norm{x^* - x}_1 \mid x^* \mbox{ is an optimal solution of (JSC)}
 \},
\label{eqn:def-mu}\\
    M^*(x) &= \{ x^* \in \mathbb{Z}^n \mid x^* \mbox{ is an optimal solution of}\mbox{ (JSC)} \mbox{ such that } \norm{x^* - x}_1 = \mu(x) \}.
\label{eqn:def-M*}
\end{align}

\begin{thm}[{cf.~\cite[Theorem 4.2]{Shioura07}}]
\label{th:ex_min_cut}
 For a vector $x \in J$ that is not an optimal solution of {\rm (JSC)},
let $s^{*} \in \univec$ be a vector satisfying \eqref{eq:descent_s}.
 Then, there exists some $x^* \in M^*(x)$ satisfying
\[
\begin{cases}
     x^{*}(i)  \leq x(i) - 1 & ({\rm if} \ s^{*} = - {\chi}_i),\\
 x^{*}(i)      \geq x(i) + 1 & ({\rm if} \ s^{*} = + {\chi}_i).
\end{cases}    
\]
\end{thm}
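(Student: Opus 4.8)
The plan is to argue by contradiction: suppose that for every $x^* \in M^*(x)$ the required inequality fails, and derive a violation of the greedy optimality of $s^*$ in~\eqref{eq:descent_s}. Fix $x^* \in M^*(x)$, and write $s^* = \sigma \chi_h$ with $\sigma \in \{+1,-1\}$. Failure of the conclusion for this $x^*$ means that $x^*(h)$ lies on the ``wrong side'' of $x(h) + s^*(h)$; concretely, if $s^* = +\chi_h$ then $x^*(h) \le x(h)$, and if $s^* = -\chi_h$ then $x^*(h) \ge x(h)$. In particular, in either case $-s^* \in \inc(x, x^*) \cup \{\mathbf 0\}$ is false in the sense that moving in direction $s^*$ from $x$ does not take us away from $x^*$ along coordinate $h$; more precisely $s^* \notin \inc(x, x^*)$.

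Next I would exploit the exchange axiom (J-EXC) together with the optimality characterization in Theorem~\ref{th:min_SC}. Since $x$ is not optimal, there is a descent move, and since $s^*$ is the greedy minimizer of $f(x+s)$ among coordinates admitting a descent pair, I want to show that \emph{some} coordinate in $\inc(x, x^*)$ already admits a descent pair $(s,t)$ with $f(x+s) \le f(x+s^*)$ — indeed with $f(x+s) < f(x+s^*)$ or with an equality that still contradicts minimality of $f(x+s^*)$ via a tie-breaking argument, unless $s^* \in \inc(x,x^*)$. The standard route, following the proof technique of \cite[Theorem 4.2]{Shioura07} adapted to jump systems, is: take $x^* \in M^*(x)$; if $s^* \in \inc(x, x^*)$ we are essentially done after checking the coordinatewise inequality; otherwise, pick any $s \in \inc(x, x^*)$ (nonempty since $x \ne x^*$ as $x$ is not optimal), use (J-EXC) to find $t \in \inc(x+s, x^*) \cup \{\mathbf 0\}$ with $x+s+t \in J$, and then use convexity of each $f_i$ plus the optimality of $x^*$ to show $f(x+s+t) < f(x)$ while $f(x+s) \le f(x + s^*)$, contradicting the greedy choice~\eqref{eq:descent_s} of $s^*$ (or forcing $x^*$ to satisfy the asserted inequality after swapping to a different nearest optimum).

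The delicate points are two. First, producing the descent pair $(s,t)$ for a coordinate $s \in \inc(x,x^*)$: here one moves from $x$ toward $x^*$ by one step $s$, and if $x+s \notin J$ one invokes (J-EXC) to get a correcting step $t \in \inc(x+s, x^*)$; the separable-convexity inequality $f(x+s+t) - f(x) \le f(x^*) - f(x^* - s - t)$ (valid because $x+s+t$ and $x^*$ lie on a common ``monotone path'' coordinatewise) combined with $x^* - s - t \in J$ via another application of (J-EXC) from $x^*$ toward $x$, and the optimality $f(x^*) \le f(x^*-s-t)$, yields $f(x+s+t) \le f(x) $, and strictness comes from $x$ not being optimal — this is the part that needs care to make strict. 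Second, reconciling the ``for all $x^* \in M^*(x)$'' quantifier: I expect to show that if the coordinatewise inequality fails for a given nearest optimum $x^*$, one can produce another optimum $x^{**}$ with $\|x^{**} - x\|_1 \le \|x^* - x\|_1$ satisfying the inequality, again by an exchange-plus-convexity argument on coordinate $h$, so $x^{**} \in M^*(x)$ witnesses the claim. I anticipate that the main obstacle is precisely this exchange step on coordinate $h$: showing that one can ``push'' a nearest optimal solution so that its $h$-th coordinate moves past $x(h)+s^*(h)$ without increasing the $L_1$-distance to $x$ and without increasing the objective, which requires combining (J-EXC) applied to the pair $(x^*, x)$ with the greedy inequality $f(x + s^*) \le f(x+s)$ for all competing $s$, and then invoking convexity to control the objective change along the two-step move.
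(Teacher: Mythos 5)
Your proposal correctly sets up the contradiction hypothesis and correctly locates the difficulty, but the decisive step is asserted rather than proved, and the one concrete chain you do sketch contains a directional error. You propose to contradict \eqref{eq:descent_s} by exhibiting some $s \in \inc(x,x^*)$ admitting a descent pair with $f(x+s) \le f(x+s^*)$. A non-strict inequality in that direction is perfectly consistent with \eqref{eq:descent_s}: the greedy choice only says $f(x+s^*) \le f(x+s)$ for every eligible $s$, and there is no tie-breaking rule to appeal to, so equality contradicts nothing. What is needed is a \emph{strict} inequality $f(x+s) < f(x+s^*)$ for some eligible $s \in \inc(x,x^*)$, and your sketch offers no mechanism for producing it; ``convexity of each $f_i$ plus optimality of $x^*$'' shows that some direction toward $x^*$ is a descent direction, but not that it beats $s^*$, which is exactly what the theorem is about.

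The paper's proof supplies this mechanism and then has to work considerably harder. Under the hypothesis that $x^*(i) \ge x(i)$ for every $x^* \in M^*(x)$ (case $s^* = -\chi_i$), one chooses $x^*$ minimizing $x^*(i)$; Proposition \ref{pr:sep}(i) transfers $f(x+s^*) < f(x)$ to $f(x^*+s^*) < f(x^*)$, forcing $x^*+s^* \notin J$, and (J-EXC) applied to $x^*$ and $x+s^*+t^*$ yields $u \in \inc(x^*,x)$ with $f(x^*+u) > f(x^*)$ and, again by convexity, $f(x-u) < f(x+s^*)$ --- the strictly better direction. But this is only the beginning: $x-u_1 \notin J$ (else \eqref{eq:descent_s} is violated at once), so a correcting step $v$ from (J-EXC) must be taken, and the greedy choice forces $f(x-u_1+v) \ge f(x)$, which in turn forces $x^*-v \notin J$; a third application of (J-EXC) produces $u_2$ with $f(x-u_2) < f(x-u_1)$ and $f(x^*+u_2) > f(x^*)$, contradicting the choice of $u_1$ as the minimizer of $f(x-u_1)$ among such directions. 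Your proposal contains neither the two extremal choices (of $x^*$ and of $u_1$) nor the chain of three exchange applications, and your alternative idea of ``pushing'' $x^*$ to another nearest optimum $x^{**}$ with the right $i$-th coordinate runs into the same unresolved obstacle you yourself flag at the end. As it stands, the argument has a genuine gap at its core.
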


\begin{rmk}\rm
In the original statement of \cite[Theorem 4.2]{Shioura07},
the vector $x^*$ is chosen from the set of all optimal solutions,
i.e., it is possible that $x^* \notin M^*(x)$,
while in Theorem \ref{th:ex_min_cut}
$x^*$ is chosen from the set $M^*(x)$ of all optimal solutions
\textit{nearest to $x$}. 
 Although Theorem \ref{th:ex_min_cut} can be shown 
by a slight modification of the proof of \cite[Theorem 4.2]{Shioura07},
we provide a proof for completeness in Section \ref{sec:ap}
in Appendix.
 Note that the original statement of \cite[Theorem 4.2]{Shioura07}
is used in \cite{Shioura07} 
to devise a (weakly-)polynomial-time algorithm for (JSC),
which runs in time polynomial in $n$  and $\Psi(J)$ in \eqref{eqn:PhiJ}.
\qed
\end{rmk}

 By Theorem \ref{th:ex_min_cut} it holds that
\[
 \mu(x+s^* + t^*) = \mu(x) -1 
\]
in each iteration of Algorithm \textsc{JSC-Greedy}
with $t^* = {\bf 0}$.
In contrast, it is possible that the equation
\[
 \mu(x+s^* + t^*) = \mu(x) -2 
\]
does not hold
in some iteration of Algorithm \textsc{JSC-Greedy}
with $t^* \ne {\bf 0}$,
as shown in the following example.
 This shows that Algorithm \textsc{JSC-Greedy} does not enjoy the geodesic property
with respect to the L$_1$-norm.

\begin{example}\rm
\label{ex:SC1}
 Let $J_1 \subseteq \mathbb{Z}^2$ be a jump system given by
\[
 J_1 = \{(0,0), (1,0), (3,0), (1,1), (2,1)\};
\]
it is easy to see that the set $J_1$ satisfies the condition (J-EXC).
 We consider the minimization of a linear function
\[
 f_1(x) = -2 x(1)- x(2) + 6
\] 
over the jump system $J_1$, 
which is a special case of the problem (JMC)
(see Figure \ref{fig:Ex1_1}).
 An optimal solution of this problem is uniquely given as $x^* = (3,0)$.

\begin{figure}[t]
\centering
\begin{tikzpicture}
\draw[step=1.0,very thin, gray] (-1.2,-1.2) grid (2.8,0.8);
\draw[->, ultra thick] (-1.2,-1.0) -- (3.0, -1.0);
\draw[->, ultra thick] (-1.0,-1.2)-- (-1.0,1.0);
\draw[fill=black!15](-1.0,-1.0) circle (0.25cm);
\draw[fill=black!15](0.0,-1.0) circle (0.25cm);
\draw[fill=white](1.0,-1.0) circle (0.25cm);
\draw[fill=black!15](2.0,-1.0) circle (0.25cm);
\draw[fill=white](-1.0,0.0) circle (0.25cm);
\draw[fill=black!15](0.0,0.0) circle (0.25cm);
\draw[fill=black!15](1.0,0.0) circle (0.25cm);
\draw[fill=white](2.0,0.0) circle (0.25cm);
\node[circle,black] at (-1.0,-1.0) {6};
\node[circle,black] at (0.0,-1.0) {4};
\node[circle,black] at (2.0,-1.0) {0};
\node[circle,black] at (-1.0,0.0) {};
\node[circle,black] at (0.0,0.0) {3};
\node[circle,black] at (1.0,0.0) {1};
\node[circle,black] at (-1.0,1.0) {};
\node[circle,black] at (0.0,1.0) {};
\node[circle,black] at (3.3,-1.1) {$x_1$};
\node[circle,black] at (-1.0,1.3) {$x_2$};
\end{tikzpicture}
  \caption{Jump system $J_1$ and linear function $f_1$.
Points in $J_1$ are represented by gray circles,
and function values of $f_1$ at the points in $J_1$
are shown in the circles.
}
  \label{fig:Ex1_1}
\end{figure}
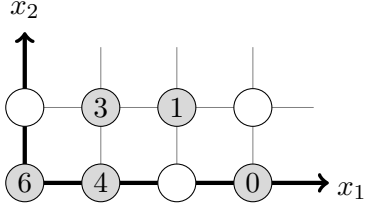

 Suppose that Algorithm \textsc{JSC-Greedy} 
is applied to the problem 
with the initial solution $x_0 = {\bf 0}$.
 In the first iteration, the vector $s^*$ is $+\chi_1$,
and $t^*$ can be either of ${\bf 0}$ and $+\chi_2$.
 If $+\chi_2$ is selected as $t^*$,
then $x$ moves from $x_0$ to the point $x_1 = (1,1)$,
which does not change the L$_1$-distance to the minimizer $x^*$,
i.e., 
\[
\norm{x_1 - x^*}_1 =    \norm{x_0 - x^*}_1 =  3.
\]
 We note that if
${\bf 0}$ is selected as $t^*$ in the first iteration,
then $x$ moves to the point $x_1 = (1,0)$,
for which it holds that
\[
\norm{x_1 - x^*}_1 =    \norm{x_0 - x^*}_1 -1 =  2.
\]
\end{example}

\subsection{Linear Optimization on Delta-Matroids}
\label{subsec:delta}

 A delta-matroid is defined as a pair $(N,\mathcal{F})$
of a finite set $N$ and
a set family $\mathcal{F} \subseteq 2^N$ with $\mathcal{F} \ne \emptyset$
satisfying the following symmetric exchange axiom:
\begin{quote}
    For every $X, Y \in \mathcal{F}$ and $i \in X \Delta Y$,
    if $X \Delta \{i\} \notin \mathcal{F}$ then there exists
some    $j \in X \Delta Y$ such that $X \Delta \{i, j\} \in \mathcal{F}$,
\end{quote}
where $X \Delta Y$ denotes the symmetric difference, i.e.,
$X \Delta Y = (X \backslash Y) \cup (Y \backslash X)$.
Every $X \in \mathcal{F}$ is called a feasible set.
 
 The concept of delta-matroid can be seen as a special case of 
jump system~\cite{Bouchet95}.
 More precisely, given a set family $\mathcal{F} \subseteq 2^N$,
the pair $(N, \mathcal{F})$ is a delta-matroid if and only if
the set of vectors $J \subseteq \{0,1\}^n$ given by
\[
 J = \{\chi_F \mid F \in \mathcal{F}\}
\]
is a jump system, 
where $\chi_F \in \{0, 1\}^n$ is the characteristic vector of $F$,
i.e., $\chi_F(i) = 1$ if $i \in F$ and $\chi_F(i) = 0$ otherwise.

 We consider the following linear optimization problem on a delta-matroid 
$(N, \mathcal{F})$:
\begin{quote}
    {\bf (DM)} Minimize $\sum_{i \in F}c(i)$ subject to $F \in \mathcal{F}$,
\end{quote}
where $c \in \mathbb{R}^n$.
 Based on the relationship between delta-matroids
and jump systems mentioned above,
the problem (DM) for delta-matroids can be seen as a special case
of the problem (JSC) for jump systems.

 The following greedy algorithm for (DM) is proposed in
\cite{Bouchet87,Chandrasekaran88} (see also~\cite{Kabadi05}):

\medskip

\noindent \AlgorithmText{\textsc{DM-Greedy}}\\
{\bf Step 0:\ } Let the elements in $N$ 
be ordered such that $|c(1)| \ge \ldots \ge |c(n)|$.\\
\phantom{\bf Step 0:\ } Set $F := \emptyset$ and $i := 1$.\\
{\bf Step 1:\ } If $c(i) < 0$ and there exists some
$Y \subseteq \{i+1, \ldots, n\}$ such that\\
\phantom{\bf Step 1:\ } \quad  $F \cup \{i\} \cup Y \in \mathcal{F}$, then set $F:= F
\cup \{i\}$.\\ 
\phantom{\bf Step 1:\ } If $c(i) \ge 0$ and there exists no $Y \subseteq \{i+1,
\ldots, n\}$ such that\\
\phantom{\bf Step 1:\ } \quad  $F \cup Y \in \mathcal{F}$, then set $F:= F \cup \{i\}$. \\
{\bf Step 2:\ } If $i < n$, then set $ i:= i + 1$ and go to Step 1.\\
\phantom{\bf Step 2:\ } Otherwise, output  $F$ and stop.

\begin{thm}[{\cite{Bouchet87,Chandrasekaran88}}]
 Algorithm \textsc{DM-Greedy} finds an optimal solution of the problem \textsc{(DM)}.
\end{thm}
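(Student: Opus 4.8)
The plan is to establish correctness by the standard ``greedy keeps an optimal completion'' exchange argument, driven by the symmetric exchange axiom. Concretely, I would prove by induction on $i = 0, 1, \ldots, n$ the following invariant: after the algorithm has processed element $i$ (with $i = 0$ meaning the state right after Step~0), the current set, call it $F^{(i)}$, satisfies $F^{(i)} \subseteq \{1, \ldots, i\}$ and there is an optimal solution $F^*$ of {\rm (DM)} with $F^* \cap \{1, \ldots, i\} = F^{(i)}$. An optimal solution exists since $\mathcal{F}$ is nonempty and finite, which also gives the base case $i = 0$ (then $F^{(0)} = \emptyset$). Taking $i = n$ in the invariant shows that the output $F^{(n)}$ is itself an optimal solution, which is the claim.

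For the inductive step, suppose the invariant holds at $i-1$ with witness $F^*$; write $F_{i-1} = F^{(i-1)} = F^* \cap \{1, \ldots, i-1\} \subseteq F^*$. If ``$i \in F^*$'' agrees with ``$i \in F^{(i)}$'', then $F^*$ itself is a witness at $i$ and there is nothing to do, so I only need to handle the two kinds of mismatch. First, the mismatches where the algorithm's decision about $i$ is \emph{forced}: if $c(i) < 0$ and the algorithm does not add $i$, then by Step~1 no $Y \subseteq \{i+1, \ldots, n\}$ has $F_{i-1} \cup \{i\} \cup Y \in \mathcal{F}$, so $i \in F^*$ would make $F^* = F_{i-1} \cup \{i\} \cup (F^* \cap \{i+1, \ldots, n\})$ contradict this; hence $i \notin F^*$, matching $F^{(i)}$. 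Symmetrically, if $c(i) \ge 0$ and the algorithm adds $i$, then no $Y \subseteq \{i+1, \ldots, n\}$ has $F_{i-1} \cup Y \in \mathcal{F}$, so $i \in F^*$ necessarily. In both situations $F^*$ stays a valid witness.

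The remaining two (symmetric) sub-cases are where the decision rests on the \emph{existence} of a completion, and here the symmetric exchange axiom does the real work. Take, say, $c(i) < 0$, the algorithm adds $i$, but $i \notin F^*$; let $Y \subseteq \{i+1, \ldots, n\}$ be the witness with $Z := F_{i-1} \cup \{i\} \cup Y \in \mathcal{F}$. Since $F^*$ and $Z$ agree on $\{1, \ldots, i-1\}$ (both equal $F_{i-1}$ there), we have $i \in F^* \, \Delta \, Z \subseteq \{i, i+1, \ldots, n\}$; applying the symmetric exchange axiom to $F^*, Z, i$ yields either $F^* \cup \{i\} \in \mathcal{F}$ or an element $j \in F^* \, \Delta \, Z$ with $j \ne i$ and $F^* \, \Delta \, \{i, j\} \in \mathcal{F}$. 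In each outcome the resulting feasible set $\widetilde F$ --- one of $F^* \cup \{i\}$, $F^* \cup \{i, j\}$, $(F^* \setminus \{j\}) \cup \{i\}$ --- agrees with $F^*$ on $\{1, \ldots, i-1\}$ and contains $i$, so $\widetilde F \cap \{1, \ldots, i\} = F^{(i)}$; moreover $j > i$, so $|c(j)| \le |c(i)| = -c(i)$, hence $c(i) \le c(j) \le -c(i)$, and a one-line sign check then gives $\sum_{\ell \in \widetilde F} c(\ell) \le \sum_{\ell \in F^*} c(\ell)$, so $\widetilde F$ is optimal and is the desired witness at $i$. The mirror sub-case ($c(i) \ge 0$, the algorithm does not add $i$, but $i \in F^*$, with witness $Z = F_{i-1} \cup Y \in \mathcal{F}$) is identical after replacing ``add $i$'' by ``delete $i$'' and using $|c(j)| \le |c(i)| = c(i)$.

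The hard part is not any single step but getting the bookkeeping right: the invariant must be phrased so that $F^*$ and the completion witness $Z$ provably agree on $\{1, \ldots, i-1\}$ --- this is exactly what forces the exchanged index $j$ to satisfy $j > i$, hence $|c(j)| \le |c(i)|$, and it is the only place the ordering $|c(1)| \ge \cdots \ge |c(n)|$ fixed in Step~0 is used. Once that is in place, each of the finitely many outcomes of the exchange axiom is dispatched by an elementary inequality between $c(i)$ and $c(j)$.
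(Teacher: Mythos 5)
The paper does not prove this theorem at all: it is quoted as a known result of Bouchet and of Chandrasekaran--Kabadi, so there is no in-paper argument to compare yours against. Your proof is correct and is essentially the standard greedy-exchange argument from those references. The invariant (``there is an optimal $F^*$ agreeing with the current set on $\{1,\ldots,i\}$'') is the right one; the two forced cases correctly show no mismatch can occur there, and in the two exchange cases the witness $Z$ and $F^*$ do agree on $\{1,\ldots,i-1\}$, so $F^* \Delta Z \subseteq \{i,\ldots,n\}$, the exchanged element $j$ satisfies $j>i$ and hence $|c(j)| \le |c(i)|$, and the sign check $c(i) + \epsilon_j c(j) \le 0$ (resp.\ $-c(i) + \epsilon_j c(j) \le 0$) goes through in all three outcomes of the symmetric exchange axiom. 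The only point worth making explicit in a final write-up is that the axiom's conclusion $F^* \Delta \{i,j\} \in \mathcal{F}$ forces $j \ne i$ (since $F^* \Delta \{i\} \notin \mathcal{F}$ in that branch), which is what licenses $j > i$; you use this implicitly and it is fine.
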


\subsection{Previous Results on Geodesic Property of Optimization Algorithms}
\label{subsec:related}

 We review various discrete optimization
problems for which certain greedy algorithms
enjoy the geodesic property.

\paragraph{Linear Optimization on Matroids}

 Recall that 
a matroid is defined as the pair $(N,\mathcal{I})$
of a finite set $N$ and
a set family $\mathcal{I} \subseteq 2^N$ with $\mathcal{I} \ne \emptyset$
satisfying the following three conditions:
\begin{quote}
$\bullet$ 
 $\emptyset \in \mathcal{I}$,
\\
$\bullet$ 
 $X \subseteq Y \in \mathcal{I} \Rightarrow X \in \mathcal{I}$,
\\
$\bullet$ 
 $X, Y \in \mathcal{F}, |X| < |Y| \Rightarrow \exists i \in Y \backslash X,\ 
	  X \cup \{i\} \in \mathcal{F}$. 
\end{quote}
\noindent
 Every $X \in \mathcal{I}$ is called an independent set.
 A maximal independent set of a matroid is called a base;
we denote by $\mathcal{B}$ the set of bases.

 We consider minimization of a linear function on a matroid:
\begin{quote}
    {\bf (M)} Minimize $\sum_{i \in X}c(i)$ subject to $X \in \mathcal{B}$,
\end{quote}
where $c \in \mathbb{R}^n$.
 It is well known that 
the problem (M) can be solved by a greedy algorithm,
which iteratively adds an element in order of increasing weight,
while keeping the condition that the generated solution is 
an independent set  (see, e.g.,  \cite{Edmonds71,Lawler76}). 

\medskip

\noindent \AlgorithmText{\textsc{Mat-Greedy}}\\
{\bf Step 0:\ } Let the elements in $N$ be ordered such that $c(1) \le \ldots \le c(n)$.\\
\phantom{\bf Step 0:\ } Set $X := \emptyset$ and $i := 1$.\\
{\bf Step 1:\ } If $X \cup \{i\} \in \mathcal{F}$, then set $X := X \cup \{i\}$.\\
{\bf Step 2:\ } If $i < n$, then set $ i:= i + 1$ and go to Step 1.\\
\phantom{\bf Step 2:\ } Otherwise, output $X$ and stop.

\medskip

\noindent
The validity of Algorithm \textsc{Mat-Greedy} immediately implies 
its geodesic property, where the distance of two sets $X, Y \subseteq N$
is defined by the cardinality of the symmetric difference $|X\Delta Y|$.

\paragraph{Separable Convex Function Minimization on Integral Base Polyhedra}

 An integral base polyhedron \cite{Fujishige05} is defined 
by using an integer-valued submodular function.
 A function $\rho: 2^N \rightarrow \mathbb{Z}$ 
is said to be submodular if it satisfies the following inequality:
\[
\forall X, Y \subseteq N: \rho(X) + \rho(Y) \ge \rho(X \cup Y) + \rho(X
	  \cap Y). 
\]
 An integral base polyhedron $B(\rho) \subseteq \mathbb{Z}^n$ 
associated with an integer-valued submodular function $\rho$
with $\rho(\emptyset) = 0$ 
is defined as
\[
    B(\rho) = \{x \in \mathbb{Z}^n_+ \mid x(S) \le \rho(S) \ (\forall S \subseteq N),\ x(N) = \rho(N)\}.
\]
 A vector in $B(\rho)$ is called a base.
 Note that the set $B(\rho)$ coincides with the set of maximal vectors in the
set
\[
    P(\rho) = \{x \in \mathbb{Z}^n \mid 
x(S) \le \rho(S) \ (\forall S \subseteq N)\}.
\]

 Consider the problem of
finding a minimizer of separable convex function $\sum_{i=1}^{n}f_i(x(i))$
over the set of bases:
\begin{quote}
    {\bf (SM)} Minimize $\sum_{i=1}^{n}f_i(x(i))$ subject to $x \in B(\rho)$;
\end{quote}
this problem is also referred to as
a submodular resource allocation problem (see, e.g., \cite{Ibaraki88,Katoh13}).

It is assumed that an initial vector $x_0 \in P(\rho)$,
which is a lower bound of an optimal solution,
is given in advance.
It is known \cite{Federgruen86} that the greedy algorithm for matroids
can be naturally generalized to (SM) for integral base polyhedra as follows:

\medskip

\noindent \AlgorithmText{\textsc{SM-Greedy}}\\
{\bf Step 0:\ } Let $x_0 \in P(\rho)$ be an initial vector. Set $x:= x_0$.\\
{\bf Step 1:\ } If $x(N) = \rho(N)$, then output $x$ and stop.\\
{\bf Step 2:\ } Find $i \in N$ with $x + \chi_i \in P(\rho)$
that minimizes the value $f(x + \chi_i)$. \\
{\bf Step 3:\ } Set $x := x + \chi_i$, and go to Step 1.

\medskip

 It is easy to see that any optimal solution
$x^*$ of (SM) found by the algorithm is nearest to 
the initial vector $x_0$, and
the distance from the current vector and the set of optimal solutions
decreases by one in each iteration.
 This shows the the geodesic property of Algorithm \textsc{SM-Greedy}.

\medskip 

\paragraph{M-convex Function Minimization}

 The concept of M-convex function is introduced by Murota 
\cite{Murota96,Murota98,Murota03} as a convexity concept in discrete optimization.
 Let $f:\mathbb{Z}^n \rightarrow \mathbb{R} \cup \{ + \infty \}$ be a function
such that the effective domain
$\dom f = \{x \in \mathbb{Z}^n \mid f(x) < + \infty \}$ is nonempty.
 Function $f$
is said to be M-convex if it satisfies the following exchange property:
\begin{quote}
 for every $x,y \in \dom f$ and $i \in N$ with $x(i)>y(i)$, 
there exists some $j \in N$ with $x(j) < y(j)$ such that
\[
    f(x) + f(y) \geq f(x + \chi_i - \chi_j) + f(y - \chi_i + \chi_j).
\]
\end{quote}

 We consider minimization of an M-convex function $f$:
\begin{quote}
    {\bf (MC)} Minimize $f(x)$ subject to $x \in \dom f$.
\end{quote}
 It is known that the effective domain 
$\dom f$ of an M-convex function
is an integral base polyhedron associated with some integer-valued submodular
function,
and therefore the problem (MC) contains the problem (SM) with separable convex
functions as a special case (see, e.g., \cite{Murota03}).
 The problem (MC) also contains
some classes of nonseparable convex function minimization on integer lattice points.
The problem (MC) can be solved 
by the following greedy algorithm in which 
the current vector is repeatedly updated
by simultaneously increasing some component and decreasing another component
\cite{Murota98,Murota00,Shioura98}.

\medskip

\noindent \AlgorithmText{\textsc{MC-Exchange}}\\
{\bf Step 0:\ } Let $x_0 \in \dom f$ be an initial vector. Set $x:= x_0$.\\
{\bf Step 1:\ } If $f(x) \leq f(x + \chi_i - \chi_j)$ for every $i, j \in N$, 
then output $x$ and stop.\\
{\bf Step 2:\ } Find $i, j \in N$ that minimize $f(x + \chi_{i} - \chi_{j})$.\\
{\bf Step 3:\ } Set $x := x + \chi_{i} - \chi_{j}$ and go to Step 1.

\medskip

\noindent
 It is known \cite{Minamikawa21,Shioura21} that
the L$_1$-distance from $x$ to a nearest minimizer of $f$
decreases by two in each iteration,
i.e., Algorithm MC-Exchange has the geodesic property.

 It is noted that 
Algorithm MC-Exchange can be specialized to 
another greedy algorithm of the problem (M) for matroids
in  which a base is maintained and 
a pair of elements are exchanged in each iteration.

\medskip

\noindent \AlgorithmText{\textsc{Mat-Exchange}}\\
{\bf Step 0:\ } Let $X_0 \in \mathcal{B}$ be an initial solution. Set $X := X_0$.\\
{\bf Step 1:\ } If $c(i) \geq c(j)$ for every $i \in N \backslash X, j \in X$
such that $X \cup \{i\} \backslash \{j\} \in \mathcal{B}$, \\
\phantom{\bf Step 1:\ }  
then output $X$ and stop.\\
{\bf Step 2:\ } Find $i \in N \backslash X, j\! \in \! X$ with $X \cup  \{i\} \backslash \{j\} \in \mathcal{B}$
that minimize $c(i) - c(j)$.\\
{\bf Step 3:\ } Set $X := X \cup \{i\} \backslash \{j\}$ and go to Step 1.

\medskip

\noindent
The geodesic property of Algorithm \textsc{Mat-Exchange},
that follows immediately from that of Algorithm \textsc{MC-Exchange},
is revealed recently \cite{Minamikawa21,Shioura21},
while the algorithm itself is known in the literature. 

\medskip 

\paragraph{Jump M-convex Function Minimization}

The concept of jump M-convex function
is introduced by Murota \cite{Murota06_1,Murota21}
as a class of discrete convex functions defined on constant-parity jump systems;
a jump system $J$ is called a constant-parity jump system
if the parity of the component sum of each vector in $J$ is constant.
A function $f:\mathbb{Z}^n \rightarrow \mathbb{R} \cup \{ + \infty \}$ 
is said to be jump M-convex if it satisfies the following property:
\begin{quote}
for every $x,y \in \dom f$
 and $s \in \inc(x, y)$, there exists some $t \in \inc(x + s, y)$ such that
\[
    f(x) + f(y) \geq f(x + s + t) + f(y - s - t).
\]
\end{quote}
 The class of jump M-convex functions properly contains that of
M-convex functions;
a jump M-convex function $f$ is an M-convex function if and only if
$x(N)=y(N)$ holds for every $x, y \in \dom f$.
 Also, a separable convex function on a
constant-parity jump system can be seen as a special case of
a jump M-convex function.

 We consider the minimization of a jump M-convex function $f$:
\begin{quote}
    {\bf (JMC)} Minimize $f(x)$ subject to $x \in \dom f$.
\end{quote}
 Note that the problem (MC) is
a special case of (JMC).
 Also, the problem (JSC) with a separable convex function
is a special case of (JMC) 
if the jump system $J$ in (JSC) has constant-parity;
see Figure \ref{fig:probs} for relationship among the problems.

The problem (JMC) can be solved by the following greedy algorithm
which updates the current vector by adding 
two unit vectors in each iterations~\cite{Murota06_2}.

\medskip

\noindent \AlgorithmText{\textsc{JMC-Greedy}}\\
{\bf Step 0:\ } Let $x_0 \in \dom f$ be an initial vector. Set $x:= x_0$.\\
{\bf Step 1:\ } If $f(x) \leq f(x + s + t)$ for every $s, t \in \univec$, 
then output $x$ and stop.\\
{\bf Step 2:\ } Find $s, t \in \univec$ that minimize $f(x + s + t)$.\\
{\bf Step 3:\ } Set $x := x + s + t$ and go to Step 1.

\medskip

 It is known \cite{Minamikawa21} that
Algorithm JMC-Greedy has the geodesic property; 
the L$_1$-distance from $x$ to a nearest minimizer of $f$
decreases by two in each iteration. 

\begin{figure}[t]
\centering
\begin{tikzpicture}[scale=0.8]
\draw   (-3,-2) rectangle (3.8,2);
\draw[rounded corners=5pt]
        (-0.1,-0.6) rectangle (3.3,0.9) node [text=black,above] at (2.6, 0.1){(MC)}
        (-0.9,-0.8) rectangle (3.5,1.7) node [text=black,above] at (2.5, 0.9) {(JMC)}
        (-2.7,-1.7) rectangle (1.2,0.6) node [text=black] at (-2.0, 0.2) {(JSC)}
        node [text=black,above] at (0.6, -0.4){\small (SM)}
;
\end{tikzpicture}
  \caption{Relationship among the problems (JSC), (SM), (MC), and (JMC).}
  \label{fig:probs}
\end{figure}
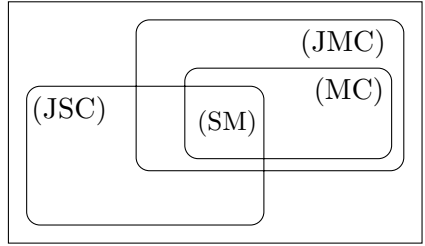

\paragraph{L-convex Function Minimization}

 The concept of L-convex function is introduced by Murota 
\cite{Murota98,Murota03} as another convexity concept in discrete optimization,
which is known to have conjugacy relationship with the class of M-convex functions.
A function $g:\mathbb{Z}^n \rightarrow \mathbb{R} \cup \{ + \infty \}$
is said to be L-convex it satisfies the following two conditions:
\begin{quote}
 (i)
 $g(x) + g(y) \ge g(x \vee y ) + g( x \wedge y)$
 for every $x, y \in \mathbb{Z}^n$,
\\
(ii)
 there exists some $r \in \mathbb{R}$ such that
 $g(x + \textbf{1}) = g(x)+r$ for $x \in \mathbb{Z}^n$,
\end{quote}
where 
$(x\vee y)(i) = \max(x(i),y(i))$
and $(x\wedge y)(i) = \min(x(i),y(i))$ for $i \in N$, and
$\textbf{1}=(1,1,\ldots, 1)$.

 We consider the minimization of an 
L-convex function $g$ with $r=0$ in the condition (ii):
\begin{quote}
    {\bf (LC)} Minimize $g(x)$ subject to $x \in \dom g$.
\end{quote}
 The problem can be solved by the following greedy algorithm
in which the current vector is always incremented \cite{Murota03}.

\medskip

\noindent \AlgorithmText{\textsc{LC-Greedy}}\\
{\bf Step 0:\ } Let $x_0 \in \dom g$ be an initial vector. Set $x:= x_0$.\\
{\bf Step 1:\ } If $g(x) \le g(x + \chi_X)$ for every $X \subseteq N$, 
then output $x$ and stop.\\
{\bf Step 2:\ } Find $X \subseteq N$ that minimizes $g(x + \chi_X)$.\\
{\bf Step 3:\ } Set $x := x + \chi_X$ and go to Step 1.

\medskip

\noindent
It is shown \cite{Murota14} that Algorithm \textsc{LC-Greedy}
has the geodesic property with respect to L$_\infty$-norm $\norm{\cdot}_\infty$.
 That is, an optimal solution $x^*$ found by the algorithm
is the one minimizing the L$_\infty$-distance from the initial vector $x_0$,
and in each iteration 
the L$_\infty$-distance from $x$ to a nearest minimizer of $g$
decreases by one in each iteration. 

\section{Our Results}
\label{sec:result}

 We observed in Section \ref{subsec:SC1} that
Algorithm \textsc{JSC-Greedy} in its general form does not have the geodesic property.
 In this section we show that a special implementation of the algorithm
enjoys the geodesic property with respect to the L$_1$-norm.
 As a corollary to this, we obtain a new greedy algorithm for 
linear optimization on delta matroids that satisfies the geodesic property.

\subsection{Refined Greedy Algorithm and Its Geodesic Property}
\label{subsec:our_algo}

 Recall that in each iteration of Algorithm \textsc{JSC-Greedy},
the current vector $x \in J$ is updated to $x + s^* + t^*$
by using the vectors $s^* \in U$ and $t^* \in U \cup \{{\bf 0}\}$
satisfying
\begin{align}
&    s^* \in \arg \min \{ f(x + s) \mid s \in \univec,~ \exists t \in \univec \cup \{ {\bf 0} \} \notag \\
& \hspace*{30mm} \mbox{ such that } x + s + t \in J \mbox{ and } f(x + s + t) < f(x) \},
    \label{eq:descent_s-2}\\
& x + s^* + t^* \in J, \qquad f(x + s^* + t^*) < f(x). 
\notag
\end{align}
 To derive the geodesic property from Algorithm \textsc{JSC-Greedy},
we add the following conditions in the choice of the vector $t^*$:
\begin{quote}
$\bullet$ 
 if $x + s^* \in J$ then $t^* = \textbf{0}$,
\\
$\bullet$ 
 if $x + s^* \notin J$, then
$t^*$ is a vector with $x + s^* + t^* \in J$
minimizing the value $f(x + s^* + t^*)$, i.e.,
 \begin{equation}
    \label{eq:descent_t-2}
    t^* \in \arg \min \{ f(x + s^* + t) \mid t \in \univec,\ x + s^* + t \in J \}.
 \end{equation}
\end{quote}
 Note that the inequality $f(x + s^* + t^*) < f(x)$ follows from
\eqref{eq:descent_t-2} under the condition \eqref{eq:descent_s-2} for $s^*$.
 The description of the modified algorithm, which we denote
 \textsc{JSC-RefinedGreedy}, is given as follows.

\medskip

\noindent \AlgorithmText{\textsc{JSC-RefinedGreedy}}\\
{\bf Step 0:\ } Let $x_0 \in J$ be an initial vector. Set $x := x_0$.\\
{\bf Step 1:\ } If $f(x) \leq f(x + s + t)$ for all $s, t \in \univec \cup \{ {\bf 0 } \}$ with $x + s + t \in J$, \\
\phantom{\bf Step 1: } then output $x$ and stop.\\
{\bf Step 2:\ } Find $s^{*} \in \univec$ satisfying 
\eqref{eq:descent_s-2}.
 If $x + s^* \in J$, then set $t^* = {\rm \bf 0}$;\\
\phantom{\bf Step 1: } otherwise,
let  $t^{*} \in \univec$ be a vector satisfying 
\eqref{eq:descent_t-2}.\\
{\bf Step 3:\ } Set $x := x + s^* + t^*$, and go to Step 1.

\medskip

\begin{example}\rm
\label{ex:SC-1-2}
 We consider the minimization of the linear function $f_1$
on the jump system $J_1 \subseteq \mathbb{Z}^2$ 
as in Example \ref{ex:SC1}.
 Algorithm JSC-Greedy with the initial solution $x_0 = \textbf{0}$
generates one of the following two trajectories of the vector $x$
(see Figure \ref{fig:JSC-Greedy}):
\begin{align*}
 & (0,0) \to (1,1) \to (2,1) \to (4,0),\\
 & (0,0) \to (1,0) \to (3,0);
\end{align*}
the latter one is a geodesic from the initial solution and the unique
optimal solution, while the former is not.

 We see that only the latter trajectory is generated by
Algorithm JSC-RefinedGreedy.
 In particular, in the first iteration we have $x + s^* = (1,0) \in J$,
and therefore $t^* = (0,0)$ is selected.
\qed
\end{example}

\begin{figure}[t]
\centering
\begin{tikzpicture}
\draw[step=1.0,very thin, gray] (-1.2,-1.2) grid (2.8,0.8);
\draw[->, ultra thick] (-1.2,-1.0) -- (3.0, -1.0);
\draw[->, ultra thick] (-1.0,-1.2)-- (-1.0,1.0);
\draw[fill=black!15, line width=2.0pt](-1.0,-1.0) circle (0.25cm);
\draw[fill=black!15](0.0,-1.0) circle (0.25cm);
\draw[fill=white](1.0,-1.0) circle (0.25cm);
\draw[fill=black!15, line width=2.0pt](2.0,-1.0) circle (0.25cm);
\draw[fill=white](-1.0,-0.0) circle (0.25cm);
\draw[fill=black!15, line width=2.0pt](0.0,0.0) circle (0.25cm);
\draw[fill=black!15, line width=2.0pt](1.0,0.0) circle (0.25cm);
\draw[fill=white](2.0,0.0) circle (0.25cm);
\node[circle,black] at (-1.0,-1.0) {6};
\node[circle,black] at (0.0,-1.0) {4};
\node[circle,black] at (2.0,-1.0) {0};
\node[circle,black] at (0.0,0.0) {3};
\node[circle,black] at (1.0,0.0) {1};
\node[circle,black] at (3.3,-1.1) {$x_1$};
\node[circle,black] at (-1.0,1.3) {$x_2$};
\draw[->, line width=3.0pt, black, dash dot dot] (-0.83,-0.83) -- (-0.17, -0.17);
\draw[->, line width=3.0pt, black, dash dot dot] (0.25,0.0) -- (0.75, 0.0);
\draw[->, line width=3.0pt, black, dash dot dot] (1.17,-0.17) -- (1.83, -0.83);
\end{tikzpicture}
\quad
\begin{tikzpicture}
\draw[step=1.0,very thin, gray] (-1.2,-1.2) grid (2.8,0.8);
\draw[->, ultra thick] (2.2,-1.0) -- (3.0, -1.0);
\draw[->, ultra thick] (-1.0,-1.2)-- (-1.0,1.0);
\draw[fill=black!15, line width=2.0pt](-1.0,-1.0) circle (0.25cm);
\draw[fill=black!15, line width=2.0pt](0.0,-1.0) circle (0.25cm);
\draw[fill=white](1.0,-1.0) circle (0.25cm);
\draw[fill=black!15, line width=2.0pt](2.0,-1.0) circle (0.25cm);
\draw[fill=white](-1.0,-0.0) circle (0.25cm);
\draw[fill=black!15](0.0,0.0) circle (0.25cm);
\draw[fill=black!15](1.0,0.0) circle (0.25cm);
\draw[fill=white](2.0,0.0) circle (0.25cm);
\node[circle,black] at (-1.0,-1.0) {6};
\node[circle,black] at (0.0,-1.0) {4};
\node[circle,black] at (2.0,-1.0) {0};
\node[circle,black] at (0.0,0.0) {3};
\node[circle,black] at (1.0,0.0) {1};
\node[circle,black] at (3.3,-1.1) {$x_1$};
\node[circle,black] at (-1.0,1.3) {$x_2$};
\draw[->, line width=3.0pt, black, dash dot dot] (-0.75,-1.0) -- (-0.25, -1.0);
\draw[->, line width=3.0pt, black, dash dot dot] (0.25,-1.0) -- (1.75, -1.0);
\end{tikzpicture}
    \caption{Behavior of Algorithms JSC-Greedy and JSC-RefinedGreedy.}
    \label{fig:JSC-Greedy}
\end{figure}
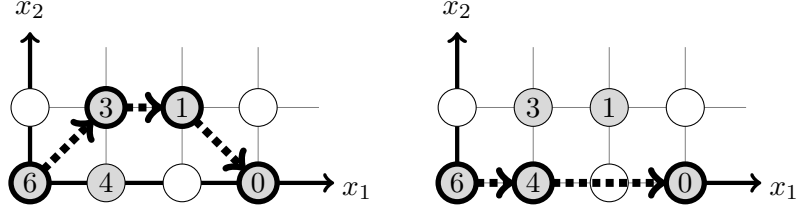

 We give a theorem on 
the vectors $s^*$ and $t^*$ chosen
in each iteration of Algorithm \textsc{JSC-RefinedGreedy},
stating that the current vector $x$ approaches some nearest minimizer
if $x$ moves in the direction $s^*+ t^*$.
 Recall that $M^*(x)$
is the set of the optimal solutions of (JSC) nearest to $x$
(see \eqref{eqn:def-M*}).
 For $x \in \mathbb{Z}^n$ and $s, t \in U\cup\{\textbf{0}\}$
with $s+t \ne \textbf{0}$, we define
a set $S(x;s,t) \subseteq \mathbb{Z}^n$  by
\begin{equation}
\label{eqn:def-Sxst}
  S(x;s,t) = \{y \in \mathbb{Z}^n \mid
 \|y - (x+s+t)\|_1 = \|y - x\|_1 - \|s+t\|_1\}.
\end{equation}
 That is, $S(x;s,t)$ is the set of vectors $y \in \mathbb{Z}^n$
for which the L$_1$-distance to $x+s+t$ is smaller than the L$_1$-distance to $x$
by $\|s+t\|_1$.
 The set $S(x;s,t)$ admits an explicit formula as follows:
\[
S(x;s,t) = \left\{\! \!
    \begin{array}{ll}
    \{ y \in \mathbb{Z}^n \mid y(i) \leq x(i) - 1\} &\ ({\rm if} \ s = - {\chi}_i, \ t = {\rm \bf 0}),\\
    \{ y \in \mathbb{Z}^n \mid y(i) \geq x(i) + 1\} &\ ({\rm if} \ s = + {\chi}_i, \ t = {\rm \bf 0}),\\
    \{ y \in \mathbb{Z}^n \mid y(i) \leq x(i) - 2\} &\ ({\rm if} \ s = t = - {\chi}_i),\\
    \{ y \in \mathbb{Z}^n \mid y(i) \geq x(i) + 2\} &\ ({\rm if} \ s = t = + {\chi}_i),\\
    \{ y \in \mathbb{Z}^n \mid y(i) \leq x(i) - 1, \ y(j) \leq x(j) - 1 \}\! &\ ({\rm
     if}\ s = - {\chi}_i, \ t = - {\chi}_j,\ i \neq j),\\
    \{ y \in \mathbb{Z}^n \mid y(i) \leq x(i) - 1, \ y(j) \geq x(j) + 1 \}\! &\ ({\rm
     if} \ s = - {\chi}_i, \ t = + {\chi}_j, \ i \neq j),\\
    \{ y \in \mathbb{Z}^n \mid y(i) \geq x(i) + 1, \ y(j) \leq x(j) - 1 \}\! &\ ({\rm if} \  s = + {\chi}_i, \ t = - {\chi}_j,\ i \neq j),\\
    \{ y \in \mathbb{Z}^n \mid y(i) \geq x(i) + 1, \ y(j) \geq x(j) + 1 \}\! &\ ({\rm if} \ s = + {\chi}_i, \ t = + {\chi}_j,\ i \neq j).\\
    \end{array}
    \right.
    \!
\]

\begin{thm}
\label{th:main_theorem}
Let $x \in J$ be a vector that 
is not an optimal solution of {\rm (JSC)}.
Also, let $s^{*} \in \univec$ and $t^{*} \in \univec \cup \{ {\rm \bf 0} \}$ 
be vectors satisfying
 \eqref{eq:descent_s-2} and \eqref{eq:descent_t-2}, respectively.
Then, $M^*(x) \cap S(x;s^*,t^*) \ne \emptyset$ holds.
\end{thm}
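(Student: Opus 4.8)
The plan is to separate the two possibilities for $t^*$. When $t^* = {\bf 0}$ the assertion is essentially a restatement of Theorem~\ref{th:ex_min_cut}: that theorem provides some $x^* \in M^*(x)$ with $x^*(i) \le x(i)-1$ if $s^* = -\chi_i$, and with $x^*(i) \ge x(i)+1$ if $s^* = +\chi_i$, which by the explicit description of $S(x;s^*,{\bf 0})$ is exactly $x^* \in M^*(x)\cap S(x;s^*,t^*)$. So from now on assume $t^*\ne{\bf 0}$; then $x+s^*\notin J$, the vector $t^* \in \univec$ minimizes $f(x+s^*+t)$ over $t\in\univec$ with $x+s^*+t\in J$, and $t^*\ne -s^*$ because $f(x+s^*+t^*)<f(x)$. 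By symmetry take $s^* = +\chi_i$; then $t^*$ is one of $+\chi_i$, $+\chi_j$, $-\chi_h$ with $j,h\ne i$, and in each subcase $S(x;s^*,t^*)$ is cut out by the inequality $y(i)\ge x(i)+1$ together with one further inequality (on $y(i)$, on $y(j)$, or on $y(h)$, respectively).

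First I would produce a ``benchmark'' move that already has the geodesic property. By Theorem~\ref{th:ex_min_cut} there is $x^*\in M^*(x)$ with $x^*(i)\ge x(i)+1$, so $s^*\in\inc(x,x^*)$; since $x+s^*\notin J$, the axiom (J-EXC) gives $t'\in\inc(x+s^*,x^*)$ with $x+s^*+t'\in J$. A direct check against the formula for $S$ --- using that $t'\ne-\chi_i$, that $t'=+\chi_i$ forces $x^*(i)\ge x(i)+2$, and that $t'=\pm\chi_j$ forces the matching inequality on $x^*(j)$ --- shows $x^*\in S(x;s^*,t')$. Hence it would be enough to know that $t^*$ can be taken equal to such a $t'$. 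The obstruction is that the $f$-minimizing $t^*$ need not be exchange-feasible for any nearest optimal solution, and $f(x+s^*+t^*)\le f(x+s^*+t')$ alone does not place $x+s^*+t^*$ on a geodesic to a nearest optimal solution.

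To get past this I would argue by contradiction as sketched in Section~\ref{sec:intro}. Suppose $M^*(x)\cap S(x;s^*,t^*)=\emptyset$; in the representative (and hardest) subcase $t^*=-\chi_h$ this forces $x^*(h)\ge x(h)$ for every $x^*\in M^*(x)$. Running Algorithm~\textsc{JSC-RefinedGreedy} from $x_0:=x$ and tracking the nearest optimal solutions along the way, one deduces that the trajectory, which first lowers coordinate $h$ to $x_0(h)-1$, must later raise it again, so there are consecutive iterates $x',x''$ with $x'(h)<x_0(h)+s^*(h)\le x''(h)$, i.e.\ coordinate $h$ crosses the threshold $x_0(h)+s^*(h)$. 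The contradiction is then extracted from this crossing by combining the monotonicity of the marginal decrements $f(x_k+s^*_k)-f(x_k)$ along the trajectory (Theorem~\ref{th:monotone-s*}) with the key lemmas of Section~\ref{sec:proof_lemmas}, which control how the coordinates of nearest optimal solutions are forced to move relative to the iterates once such a crossing occurs. The subcases $t^*=+\chi_i$ and $t^*=+\chi_j$ run on the same scheme, with the threshold placed on coordinate $i$ (at height $x_0(i)+2$) or on coordinate $j$.

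The main obstacle is precisely this final step: there is no one-iteration argument tying the greedy minimizer $t^*$ to a nearest optimal solution, so one is driven to the global, trajectory-level contradiction, and formulating and proving the auxiliary lemmas that convert the threshold-crossing into an actual contradiction is where the real difficulty lies.
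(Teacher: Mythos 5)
Your treatment of the case $t^* = \mathbf{0}$ is correct and matches the paper, and your high-level plan for $t^* \ne \mathbf{0}$ (contradiction via a threshold crossing on the coordinate $h$) is the right one. But what you have written is a plan, not a proof: the entire substance of the argument is delegated to ``the key lemmas of Section~\ref{sec:proof_lemmas}, which control how the coordinates of nearest optimal solutions are forced to move,'' and you yourself concede that formulating and proving those lemmas is where the difficulty lies. That concession is accurate, and it means the proposal has a genuine gap precisely at the step that constitutes the theorem's content. Concretely, the following ingredients are missing. First, the sequence along which the crossing occurs is not the trajectory of the algorithm started at $x$: the paper builds a \emph{conformal} greedy sequence $x_0 = x + s^* + t^*, x_1, \ldots, x_k = x^*$ toward a carefully chosen $x^* \in M^*(x)$ (the one with $s^* \in \inc(x,x^*)$ minimizing $x^*(h)$), obtained by running the greedy scheme on the restriction of $J$ to the box between the current iterate and $x^*$; this is what guarantees both the conformality \eqref{eqn:conformal} and the monotonicity \eqref{eq:sj} inherited from Theorem~\ref{th:monotone-s*}. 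Without confining the moves to that box, the algorithm's actual trajectory need not approach $x^*$ at all, and the crossing claim does not follow. Second, this construction requires reducing to the case where \emph{all} optimal solutions lie in $M^*(x)$, which the paper achieves by perturbing $f$ to $f_\varepsilon(y) = f(y) + \varepsilon\norm{y-x}_1$; your proposal does not address this.

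Third, and most importantly, the conversion of the crossing at index $l$ into a contradiction is not a soft argument about ``coordinates of nearest optimal solutions being forced to move.'' It is a counting argument: one partitions the index set $T$ into $T_0, T_-, T_+$ according to whether the marginal decrement $f(x_{j-1}+s_j+t_j) - f(x_{j-1}+s_j)$ beats the benchmark $f(x+s^*+t^*) - f(x+s^*)$, proves via the greedy optimality of $t^*$ that no vector of the form $x + s^* + \sum_{p\in P} s_p + \sum_{q \in Q} t_q$ with $P \subseteq S$, $Q \subseteq T_-$ can lie in $J$ (Lemma~\ref{le:y1-2}), extracts from this an initial pair $(P_0,Q_0)$ with $|P_0|+|Q_0| \ge |S|+|T_-|-|T_+|+1$ (Lemma~\ref{le:y1}), and then propagates the pair forward by repeated applications of (J-EXC) with controlled losses depending on whether $j \in T_-$, $T_0$, or $T_+$ (Lemma~\ref{lem:key-lemma-proofofTh5}), until the lower bound on $|P_{l-1}|+|Q_{l-1}|$ contradicts the trivial upper bound \eqref{eq:p+q}. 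None of this is visible in, or recoverable from, your sketch, so the proposal as it stands does not establish the theorem.
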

\noindent
 Note that Theorem \ref{th:main_theorem} can be seen as a strengthened version
of Theorem \ref{th:ex_min_cut} with the aid of the vector $t^*$.
 Proof of Theorem \ref{th:main_theorem} is given in Sections 
 \ref{sec:proofs} and \ref{sec:proof_lemmas}.

 As a corollary of Theorem \ref{th:main_theorem}, 
we obtain the following property on the geodesic property of
Algorithm JSC-RefinedGreedy, i.e.,
the trajectory of the solutions generated by the algorithm
is a geodesic between the initial solution and a nearest optimal solution.
 Recall that $\mu(x)$ is the L$_1$-distance from $x$ to
an optimal solution nearest to $x$
(see \eqref{eqn:def-mu}).

\begin{cor}
\label{coro:new_min_cut}
 Let $x, s^*, t^*$ be as in Theorem \ref{th:main_theorem}.
Then, it holds that
\begin{equation}
    \mu (x + s^{*} + t^{*}) =  \left\{
    \begin{array}{ll}
 \mu (x) - 1 \ &({\rm if} \ t^* = {\rm \bf 0}),\\
 \mu (x) - 2 \ &({\rm if} \ t^* \neq {\rm \bf 0}),\\
    \end{array}
    \right.
    \label{eq:num_iteration}
\end{equation}
and $M^*(x + s^{*} + t^{*}) = M^*(x) \cap S(x;s^*,t^*)$.
 In particular, the equation \eqref{eq:num_iteration} holds in
each iteration of Algorithm \textsc{JSC-RefinedGreedy}.
\end{cor}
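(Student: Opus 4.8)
The plan is to derive Corollary~\ref{coro:new_min_cut} from Theorem~\ref{th:main_theorem} by purely elementary arguments about the L$_1$-metric; the substantive work is already contained in Theorem~\ref{th:main_theorem}, so what remains is bookkeeping. Throughout I would write $d = s^*+t^*$ and $x' = x+s^*+t^*$. First I would observe that $d \neq \textbf{0}$: indeed $f(x') < f(x)$ holds (this follows from \eqref{eq:descent_t-2} under \eqref{eq:descent_s-2}, as remarked after \eqref{eq:descent_t-2}), so $x' \neq x$. Hence $\|d\|_1 = 1$ when $t^* = \textbf{0}$ and $\|d\|_1 = 2$ when $t^* \neq \textbf{0}$, and in both cases $\|d\|_1$ equals the quantity $\|s^*+t^*\|_1$ appearing in the definition \eqref{eqn:def-Sxst} of $S(x;s^*,t^*)$. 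Next I would isolate the only two facts used below: (i) $\|y-x'\|_1 \ge \|y-x\|_1 - \|d\|_1$ for every $y \in \mathbb{Z}^n$, which is the triangle inequality applied to $y-x = (y-x') + d$; and (ii) equality holds in (i) if and only if $y \in S(x;s^*,t^*)$, which is immediate from \eqref{eqn:def-Sxst}.

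The distance formula \eqref{eq:num_iteration} then follows by sandwiching $\mu(x')$. For the upper bound I would invoke Theorem~\ref{th:main_theorem} to get some $x^* \in M^*(x) \cap S(x;s^*,t^*)$; since $x^*$ is an optimal solution with $\|x-x^*\|_1 = \mu(x)$ and, by (ii), $\|x'-x^*\|_1 = \|x-x^*\|_1 - \|d\|_1$, this yields $\mu(x') \le \|x'-x^*\|_1 = \mu(x) - \|d\|_1$. For the lower bound I would pick any $z \in M^*(x')$; since $z$ is optimal, $\|x-z\|_1 \ge \mu(x)$, so by (i), $\mu(x') = \|x'-z\|_1 \ge \|x-z\|_1 - \|d\|_1 \ge \mu(x) - \|d\|_1$. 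Combining the two bounds gives $\mu(x') = \mu(x) - \|d\|_1$, which is exactly \eqref{eq:num_iteration} after substituting $\|d\|_1 \in \{1,2\}$.

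For the identity $M^*(x') = M^*(x) \cap S(x;s^*,t^*)$ I would argue two inclusions. For ``$\supseteq$'': if $x^* \in M^*(x) \cap S(x;s^*,t^*)$, then by (ii) $\|x'-x^*\|_1 = \mu(x) - \|d\|_1 = \mu(x')$, and since $x^*$ is optimal this means $x^* \in M^*(x')$. For ``$\subseteq$'': if $z \in M^*(x')$, then the inequalities in the lower-bound argument above are forced to be equalities; the first forces $\|x-z\|_1 = \mu(x)$, i.e., $z \in M^*(x)$, and the second forces $\|x'-z\|_1 = \|x-z\|_1 - \|d\|_1$, i.e., $z \in S(x;s^*,t^*)$ by (ii). Finally, for the ``in particular'' clause, in each iteration of Algorithm~\textsc{JSC-RefinedGreedy} the current vector $x$ fails the test in Step~1, hence is not optimal, and Step~2 chooses $s^*, t^*$ precisely so that the hypotheses of Theorem~\ref{th:main_theorem} hold; in the subcase $x+s^* \in J$ with $t^* = \textbf{0}$ one only needs Theorem~\ref{th:ex_min_cut}, since $S(x;s^*,\textbf{0})$ is exactly the half-space described there. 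Therefore \eqref{eq:num_iteration} applies at every step.

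I do not anticipate a genuine obstacle here: the argument is a few lines of triangle-inequality manipulation once Theorem~\ref{th:main_theorem} is available, and the real difficulty has been pushed into that theorem. The only points requiring a moment's care are verifying $s^*+t^* \neq \textbf{0}$ (so that $\|s^*+t^*\|_1$ is genuinely $1$ or $2$ and $S(x;s^*,t^*)$ is well defined) and handling the $t^* = \textbf{0}$ and $t^* \neq \textbf{0}$ cases uniformly via the single quantity $\|d\|_1$.
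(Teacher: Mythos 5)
Your proposal is correct and follows essentially the same route as the paper: both arguments sandwich $\mu(x+s^*+t^*)$ between $\mu(x)-\norm{s^*+t^*}_1$ from the triangle inequality (lower bound via any $z \in M^*(x+s^*+t^*)$) and the same quantity from a point of $M^*(x)\cap S(x;s^*,t^*)$ supplied by Theorem~\ref{th:main_theorem} (upper bound), then read off the set identity from the forced equalities. The only cosmetic difference is that you package $\norm{s^*+t^*}_1$ as a single quantity $\norm{d}_1$ rather than writing $1+\norm{t^*}_1$ as the paper does.
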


\begin{proof}
We denote
\[
  \widetilde{M} = M^*(x) \cap S(x;s^*,t^*).
\]
By Theorem \ref{th:main_theorem}, we have $\widetilde{M} \neq \emptyset$.
 For every $y^* \in M^*(x + s^* + t^*)$,
it holds that
\begin{align}
    \mu (x + s^* + t^*)
    =   \norm{y^* - (x + s^* + t^*)}_1 
    &\ge \norm{y^* - x}_1 - \norm{s^* + t^*}_1 \notag \\
    &= \norm{y^* - x}_1 - 1 - \norm{t^*}_1 \notag \\
    &\ge \mu(x) - 1 - \norm{t^*}_1,
     \label{eq:p2_1_jmc}
\end{align}
where the first inequality is by the triangle inequality.
 We also have
\begin{align}
    \mu (x + s^* + t^*) 
\le    \norm{x^* - (x + s^* + t^*)}_1 
    &=      \norm{x^* - x}_1 - 1 - \norm{t^*}_1  \notag \\
    &=      \mu(x) - 1 - \norm{t^*}_1
     \label{eq:p2_2_jmc}
\end{align}
for every $x^* \in \widetilde{M}$,
where the first equality is by $x^* \in S(x;s^*,t^*)$
and the second equality is by $x^* \in M^*(x)$.
 It follows from \eqref{eq:p2_1_jmc} and \eqref{eq:p2_2_jmc} that 
all the inequalities in \eqref{eq:p2_1_jmc} and \eqref{eq:p2_2_jmc} hold
with equality.
 In particular, we have
\begin{align}
& 
\forall y^* \in M^*(x + s^* + t^*): \norm{y^* - (x + s^* + t^*)}_1 = \norm{y^* - x}_1 - 1 - \norm{t^*}_1 \notag \\
&   \hspace{76mm} = \mu(x) - 1 - \norm{t^*}_1, \label{eq:p2_3_jmc}
\\
&
\forall x^* \in \widetilde{M}: \mu (x + s^* + t^*) = \norm{x^* - (x + s^* + t^*)}_1 = \mu(x) - 1 - \norm{t^*}_1. \label{eq:p2_4_jmc}
\end{align}
 The condition \eqref{eq:num_iteration} follows immediately from \eqref{eq:p2_4_jmc}.

 The equation \eqref{eq:p2_3_jmc} implies the inclusion
$M^*(x + s^* + t^*) \subseteq \widetilde{M}$.
 Indeed, for every $y^* \in M^*(x + s^* + t^*)$,
it holds that $y^* \in M^*(x)$ by $\norm{y^* - x}_1 = \mu(x)$,
and also holds that
 $y^* \in S(x;s^*,t^*)$
by $\norm{y^* - (x + s^* + t^*)}_1 = \norm{y^* - x}_1 - 1 - \norm{t^*}_1$.
 Hence, $M^*(x + s^* + t^*) \subseteq 
 M^*(x) \cap S(x;s^*,t^*) = \widetilde{M}$ holds.

 The equation \eqref{eq:p2_4_jmc} implies that
for every $x^* \in \widetilde{M}$, we have
$\norm{x^* - (x + s^* + t^*)}_1 = \mu (x + s^* + t^*)$, i.e., 
$x^* \in M^*(x + s^* + t^*)$ holds.
 Thus, we have $M^*(x + s^* + t^*) \supseteq \widetilde{M}$
and therefore $M^*(x + s^{*} + t^{*}) = M^*(x) \cap S(x;s^*,t^*)$ holds.
\end{proof}

 From Corollary \ref{coro:new_min_cut} 
(the equation \eqref{eq:num_iteration}, in particular)
lower and upper bounds for the number of iterations of
Algorithm \textsc{JSC-RefinedGreedy} can be obtained.
 It is not difficult to see that the lower and upper bounds are tight.

\begin{cor}
\label{cor:iteration_SC}
Algorithm \textsc{JSC-RefinedGreedy} requires
at least $\lceil (\mu(x_0)/2)\rceil$ iterations and at most $\mu(x_0)$
iterations.
\end{cor}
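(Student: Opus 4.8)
The plan is to derive both bounds directly from the equation \eqref{eq:num_iteration} of Corollary~\ref{coro:new_min_cut}, treating $\mu$ as a strictly decreasing nonnegative integer potential along the trajectory. Let $x_0, x_1, \ldots, x_K$ be the sequence of vectors generated by Algorithm~\textsc{JSC-RefinedGreedy}, where $x_{k+1} = x_k + s^*_k + t^*_k$ for $k = 0, 1, \ldots, K-1$ and $K$ is the number of iterations performed. First I would observe that for each $k$ with $0 \le k < K$ the vector $x_k$ is \emph{not} an optimal solution of (JSC): otherwise Step~1 would have terminated the algorithm at $x_k$, since by Theorem~\ref{th:min_SC} the stopping test in Step~1 is equivalent to optimality of $x_k$. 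Hence Corollary~\ref{coro:new_min_cut} applies in each iteration, and $\mu(x_{k+1}) = \mu(x_k) - \delta_k$ with $\delta_k \in \{1,2\}$. In particular $\mu(x_0) > \mu(x_1) > \cdots > \mu(x_K) \ge 0$ is a strictly decreasing sequence of nonnegative integers, which also confirms that the algorithm halts after finitely many iterations, necessarily with $\mu(x_K) = 0$, i.e., with $x_K$ an optimal solution.

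Next I would telescope: summing $\mu(x_k) - \mu(x_{k+1}) = \delta_k$ over $k = 0, \ldots, K-1$ and using $\mu(x_K) = 0$ gives $\mu(x_0) = \sum_{k=0}^{K-1} \delta_k$. Since $1 \le \delta_k \le 2$ for every $k$, this yields $K \le \mu(x_0) \le 2K$. The left-hand inequality is exactly the claimed upper bound $K \le \mu(x_0)$; the right-hand inequality gives $K \ge \mu(x_0)/2$, and since $K$ is an integer this sharpens to $K \ge \lceil \mu(x_0)/2 \rceil$, the claimed lower bound.

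There is essentially no technical obstacle here: the whole argument is potential-function bookkeeping once Corollary~\ref{coro:new_min_cut} is available. The only point requiring a moment of care is the identification of the termination condition, namely that the test in Step~1 is triggered precisely when $\mu(x) = 0$; this follows because Theorem~\ref{th:min_SC} characterizes the global optimality being tested in Step~1, and $\mu(x) = 0$ holds if and only if $x$ is itself an optimal solution of (JSC). The tightness of the two bounds, mentioned in the surrounding text, would be established separately by exhibiting an instance in which every $\delta_k$ equals $1$ and one in which every $\delta_k$ equals $2$.
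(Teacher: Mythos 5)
Your argument is correct and is exactly the route the paper intends: the paper gives no explicit proof of this corollary, merely noting that the bounds follow from equation \eqref{eq:num_iteration} of Corollary \ref{coro:new_min_cut}, and your telescoping of the potential $\mu$ (with $\delta_k\in\{1,2\}$ per iteration and $\mu(x_K)=0$ at termination via Theorem \ref{th:min_SC}) supplies precisely the missing bookkeeping.
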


\begin{rmk}
 \rm
 Based on Theorem \ref{th:min_SC} on the optimality condition of (JSC),
we may also consider another variant of the greedy algorithm for (JSC),
in which the current vector $x$ is repeatedly updated to
a minimizer in the neighborhood $N(x)$ of $x$
given by
\begin{align*}
   N(x) &= \{x + s + t \in \mathbb{Z}^n \mid s, t \in U \cup\{\textbf{0}\} \mbox{ such that } x + s + t \in J\}\\
    &= \{y \in \mathbb{Z}^n \mid \|y - x\|_1 \le 2 \mbox{ such that } y \in J\}.  
\end{align*}

 \medskip

 \noindent \AlgorithmText{\textsc{JSC-RefinedGreedy2}}\\
 {\bf Step 0:\ } Let $x_0 \in J$ be an initial vector. Set $x = x_0$.\\
 {\bf Step 1:\ } If $f(x) \leq f(x + s + t)$ for all $s, t \in \univec \cup \{ {\bf 0 } \}$ with $x + s + t \in J$, \\
 \phantom{\bf Step 1:\ } then output $x$ and stop.\\
 {\bf Step 2:\ } Find $s^*, t^* \in \univec \cup \{ {\bf 0 } \}$ that minimize $f(x + s^* + t^*)$ such that\\
 \phantom{\bf Step 2:\ } $x + s^* + t^* \in J$.\\
 {\bf Step 3:\ } Set $x := x + s^* + t^*$, and go to Step 1.

 \medskip

 The same approach is used in the greedy algorithms for
the minimization of M-convex and jump M-convex functions,
for which the geodesic property is shown 
(see Section \ref{subsec:related}).

 On the other hand, Algorithm \textsc{JSC-RefinedGreedy2} for (JSC)
does not have the geodesic property.
 Indeed, if Algorithm \textsc{JSC-RefinedGreedy2} 
is applied to the instance of (JSC) in Example \ref{ex:SC-1-2},
then the trajectory of $x$ is $(0,0) \to (1,1) \to (2,1) \to (4,0)$,
which is not a geodesic from the initial solution and the unique
optimal solution.

 Moreover, the example given below shows that
\textsc{JSC-RefinedGreedy2} may require more
iterations than $\mu(x_0)$, which is an upper bound 
for the number of iterations of \textsc{JSC-RefinedGreedy}.
\qed
\end{rmk}

\begin{example}\rm
\label{ex:SC3}
We consider the problem of minimizing a linear function 
$f_2(x) = -3x(1) -2 x(2) + 9$
on the finite jump system $J_2 \subseteq \mathbb{Z}^2$ 
given by (see Figure \ref{fig:Ex4_1})
\[
 J_2= \{(0,0),(1,0), (3,0), (0,1),(2,1), (0,3),(1,3)\}.
\]
 Note that $(3,0)$ is the unique optimal solution.

\begin{figure}[t]
\centering
	\begin{tikzpicture}
		\draw[step=1.0,very thin, gray] (-1.2,-1.2) grid (2.8,1.8);
		\draw[->, ultra thick] (-1.2,-1.0) -- (3.0, -1.0);
		\draw[->, ultra thick] (-1.0,-1.2)-- (-1.0,2.0);
		\draw[fill=black!15](-1.0,-1.0) circle (0.25cm);
		\draw[fill=black!15](0.0,-1.0) circle (0.25cm);
		\draw[fill=white](1.0,-1.0) circle (0.25cm);
		\draw[fill=black!15](2.0,-1.0) circle (0.25cm);
		\draw[fill=black!15](-1.0,0.0) circle (0.25cm);
		\draw[fill=white](0.0,0.0) circle (0.25cm);
		\draw[fill=black!15](1.0,0.0) circle (0.25cm);
		\draw[fill=white](2.0,0.0) circle (0.25cm);
		\draw[fill=black!15](-1.0,1.0) circle (0.25cm);
		\draw[fill=black!15](0.0,1.0) circle (0.25cm);
		\draw[fill=white](1.0,1.0) circle (0.25cm);
		\draw[fill=white](2.0,1.0) circle (0.25cm);
		\node[circle,black] at (-1.0,-1.0) {9};
		\node[circle,black] at (0.0,-1.0) {6};
		\node[circle,black] at (2.0,-1.0) {0};
		\node[circle,black] at (-1.0,0.0) {7};
		\node[circle,black] at (1.0,0.0) {1};
		\node[circle,black] at (-1.0,1.0) {5};
		\node[circle,black] at (0.0,1.0) {2};
		\node[circle,black] at (3.3,-1.1) {$x_1$};
		\node[circle,black] at (-1.0,2.3) {$x_2$};
	\end{tikzpicture}
  \caption{Jump system $J_2$ and linear function $f_2$.
Points in $J_2$ are represented by gray circles,
and function values of $f_2$ at the points in $J_2$
are shown in the circles.
}
  \label{fig:Ex4_1}
\end{figure}
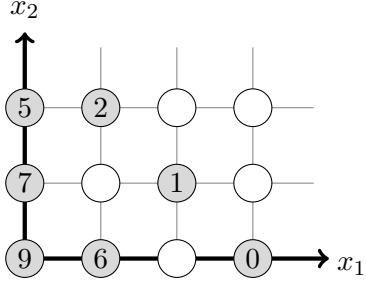

\begin{figure}[t]
\centering
	\begin{tikzpicture}
		\draw[step=1.0,very thin, gray] (-1.2,-1.2) grid (2.8,1.8);
		\draw[->, ultra thick] (-1.2,-1.0) -- (3.0, -1.0);
		\draw[->, ultra thick] (-1.0,-1.2)-- (-1.0,2.0);
		\draw[fill=black!15, line width=2.0pt](-1.0,-1.0) circle (0.25cm);
		\draw[fill=black!15](0.0,-1.0) circle (0.25cm);
		\draw[fill=white](1.0,-1.0) circle (0.25cm);
		\draw[fill=black!15, line width=2.0pt](2.0,-1.0) circle (0.25cm);
		\draw[fill=black!15](-1.0,0.0) circle (0.25cm);
		\draw[fill=white](0.0,0.0) circle (0.25cm);
		\draw[fill=black!15, line width=2.0pt](1.0,0.0) circle (0.25cm);
		\draw[fill=white](2.0,0.0) circle (0.25cm);
		\draw[fill=black!15, line width=2.0pt](-1.0,1.0) circle (0.25cm);
		\draw[fill=black!15, line width=2.0pt](0.0,1.0) circle (0.25cm);
		\draw[fill=white](1.0,1.0) circle (0.25cm);
		\draw[fill=white](2.0,1.0) circle (0.25cm);
		\node[circle,black] at (-1.0,-1.0) {9};
		\node[circle,black] at (0.0,-1.0) {6};
		\node[circle,black] at (2.0,-1.0) {0};
		\node[circle,black] at (-1.0,0.0) {7};
		\node[circle,black] at (1.0,0.0) {1};
		\node[circle,black] at (-1.0,1.0) {5};
		\node[circle,black] at (0.0,1.0) {2};
		\node[circle,black] at (3.3,-1.1) {$x_1$};
		\node[circle,black] at (-1.0,2.3) {$x_2$};
\draw[->, line width=3.0pt, black, dash dot dot] (-1.0,-0.75) -- (-1.0, 0.75);
\draw[->, line width=3.0pt, black, dash dot dot] (-0.75,1.0) -- (-0.25, 1.0);
\draw[->, line width=3.0pt, black, dash dot dot] (0.17,0.83) -- (0.83, 0.17);
\draw[->, line width=3.0pt, black, dash dot dot] (1.17,-0.17) -- (1.83, -0.83);
	\end{tikzpicture}
	\begin{tikzpicture}
		\draw[step=1.0,very thin, gray] (-1.2,-1.2) grid (2.8,1.8);
		\draw[->, ultra thick] (-1.2,-1.0) -- (3.0, -1.0);
		\draw[->, ultra thick] (-1.0,-1.2)-- (-1.0,2.0);
		\draw[fill=black!15, line width=2.0pt](-1.0,-1.0) circle (0.25cm);
		\draw[fill=black!15, line width=2.0pt](0.0,-1.0) circle (0.25cm);
		\draw[fill=white](1.0,-1.0) circle (0.25cm);
		\draw[fill=black!15, line width=2.0pt](2.0,-1.0) circle (0.25cm);
		\draw[fill=black!15](-1.0,0.0) circle (0.25cm);
		\draw[fill=white](0.0,0.0) circle (0.25cm);
		\draw[fill=black!15](1.0,0.0) circle (0.25cm);
		\draw[fill=white](2.0,0.0) circle (0.25cm);
		\draw[fill=black!15](-1.0,1.0) circle (0.25cm);
		\draw[fill=black!15](0.0,1.0) circle (0.25cm);
		\draw[fill=white](1.0,1.0) circle (0.25cm);
		\draw[fill=white](2.0,1.0) circle (0.25cm);
		\node[circle,black] at (-1.0,-1.0) {9};
		\node[circle,black] at (0.0,-1.0) {6};
		\node[circle,black] at (2.0,-1.0) {0};
		\node[circle,black] at (-1.0,0.0) {7};
		\node[circle,black] at (1.0,0.0) {1};
		\node[circle,black] at (-1.0,1.0) {5};
		\node[circle,black] at (0.0,1.0) {2};
		\node[circle,black] at (3.3,-1.1) {$x_1$};
		\node[circle,black] at (-1.0,2.3) {$x_2$};
\draw[->, line width=3.0pt, black, dash dot dot] (-0.75,-1.0) -- (-0.25, -1.0);
\draw[->, line width=3.0pt, black, dash dot dot] (0.25,-1.0) -- (1.75, -1.0);
	\end{tikzpicture}
  \caption{Behavior of Algorithms JSC-RefinedGreedy2 (left)
and JSC-RefinedGreedy (right).}
  \label{fig:Ex4_2}
\end{figure}
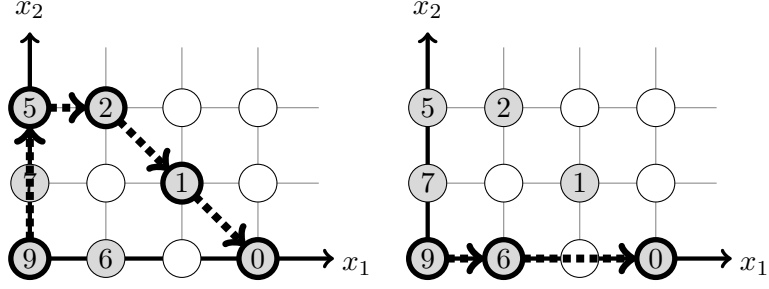

 Figure~\ref{fig:Ex4_2} shows the behavior of 
Algorithms \textsc{JSC-RefinedGreedy2} and \textsc{JSC-RefinedGreedy}
when applied to the  problem with the initial solution $x_0 = {\bf 0}$.
 In the first iteration of Algorithms \textsc{JSC-RefinedGreedy2},
the vector $x$ is updated to 
a minimizer in  the set 
\[
 N(x)
= \{y \in \mathbb{Z}^2 \mid \|y - x\|_1 \le 2 \mbox{ such that } y \in J\}
= \{(0,0),(1,0), (0,1),(0,2)\},
\]
i.e., $x = (0,2)$.
 The trajectory of $x$ in the following iterations
is given as $(0,2)\to (1,2) \to (2,1) \to (3,0)$,
and it requires $4$ iterations in total, which is larger
than $\mu(x_0) = 3$.

 On the other hand, 
the first iteration of Algorithms \textsc{JSC-RefinedGreedy}
selects
a unit vector $s \in \{(1,0), (0,1)\}$ with the smaller value of
$f(x_0 + s)$, i.e., $s = (1,0)$.
 Since $x_0 + s \in J$  holds in this case,
we have $t = (0,0)$ and $x$ is updated to $x_0 + s + t = (1,0)$
in the first iteration.
 In the following iterations, $x$ is updated to 
$(1,0) \to (3,0)$, i.e., it takes only two iterations until
the unique minimizer is found.
\qed
\end{example}

\subsection{Application to Linear Optimization on Delta-Matroids}
\label{subsec:sp_delta}

As we observed in Section \ref{subsec:delta}, 
the problem (DM) of linear function minimization
on delta-matroids is a special case of (JSC) for a jump system. 
 Hence, Algorithm JSC-RefinedGreedy can be specialized to (DM) by using
the following property:
for $F \subseteq N$ and $i,j \in N$, it holds that
\[
 F \Delta \{i,j\}  =
\begin{cases}
 F \setminus \{i\}& (\mbox{if }i=j \in F),\\
 F \cup \{i\}& (\mbox{if }i=j \in N \setminus F),\\
 (F \cup \{i\}) \setminus \{j\}& (\mbox{if }i \in N \setminus F,\ j \in F),\\
 F \cup \{i,j\}& (\mbox{if }i,j \in N \setminus F,\ i\ne j),\\
 F \setminus \{i,j\}& (\mbox{if }i,j \in F,\ i\ne j).
\end{cases}
\]
 It is assumed that an initial solution $F_0 \in {\cal F}$ is given in advance,
and we denote $c(F) = \sum_{i \in F}c(i)$ for $F \subseteq N$.

\medskip

\noindent \AlgorithmText{\textsc{DM-RefinedGreedy}}\\
{\bf Step 0:\ } Let $F_0 \in \mathcal{F}$ be an initial solution,
and set $F:=F_0$. \\
{\bf Step 1:\ } If $c(F) \leq c(F \Delta \{i,j\})$ 
for all $i,j \in N$ with $F \Delta \{i,j\}\in \mathcal{F}$,\\
\phantom{\bf Step 1: } 
then output $x$ and stop.\\
{\bf Step 2:\ } Find $i^* \in N$ satisfying 
\begin{align*}
&    i^* \in \arg \min \{ c(F \Delta \{i\}) \mid 
i \in N,~ \exists j \in N \notag \\
& \hspace*{30mm} \mbox{ such that } F \Delta \{i,j\} \in \mathcal{F} 
\mbox{ and } c(F \Delta \{i,j\}) < c(F) \}.
\end{align*}
\phantom{\bf Step 2: } If $F \Delta \{i^*\} \in \mathcal{F}$, then set $j^* = i^*$;\\
\phantom{\bf Step 2: } otherwise, let $j^* \in N \setminus\{i^*\}$ be the element with $F \Delta \{i^*,j^*\} \in \mathcal{F}$\\
\phantom{\bf Step 2: } minimizing the value $c(F \Delta \{i^*,j^*\})$.\\
{\bf Step 3:\ } Set $F := F \Delta \{i^*,j^*\}$, and go to Step 1.

\medskip

\noindent
 The following geodesic results can be obtained  
by specializing Corollary \ref{coro:new_min_cut} to (DM).
 For $F \in \mathcal{F}$, we define
\begin{align*}
    \mu_{\rm DM}(F) 
&= \min\{|F \Delta F^*|  \mid F^* \mbox{ is an optimal solution of (DM)}
 \},
\\
  M_{\rm DM}^*(F) 
&= \{ F^* \in \mathcal{F} \mid F^* \mbox{ is an optimal solution of}\mbox{ (DM)} \mbox{ such that } |F \Delta F^*| = \mu_{\rm DM}(F)  \}.
\end{align*}

\begin{cor}
 Let $F \in \mathcal{F}$ and $i^*, j^* \in N$
be the feasible set and elements 
in an iteration of Algorithm \textsc{DM-RefinedGreedy}.
 Then, it holds that
\[
    \mu_{\rm DM} (F \Delta \{i^{*}, j^{*}\}) =  \left\{
    \begin{array}{ll}
 \mu_{\rm DM} (x) - 1 \ &({\rm if} \ i^* = j^*),\\
 \mu_{\rm DM} (x) - 2 \ &({\rm if} \ i^* \neq j^*).
    \end{array}
    \right.
\]
 Moreover, for $F^* \in M_{\rm DM}^*(F)$, we have
$F^* \in M_{\rm DM}^*(F \Delta \{i^{*}, j^{*}\})$ if and only if
$F^*$ satisfies the following conditions
\begin{quote}
 $\bullet$ if $i^* \in F$ (resp., $j^* \in F$),
then $i^* \in N \setminus F^*$ (resp., $j^* \in N \setminus F^*$),\\
 $\bullet$ if $i^* \in N \setminus F$ (resp., $j^* \in N \setminus F$),
then $i^* \in F^*$ (resp., $j^* \in F^*$).
\end{quote}
\end{cor}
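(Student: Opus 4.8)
The plan is to obtain this corollary as a pull-back of Corollary~\ref{coro:new_min_cut} under the standard identification of the delta-matroid $(N,\mathcal{F})$ with the $\{0,1\}$-valued jump system $J=\{\chi_F\mid F\in\mathcal{F}\}$, under which (DM) is the instance of (JSC) given by the linear (hence separable convex) function $f(x)=\sum_{i\in N}c(i)x(i)$. First I would record the dictionary between the two settings. For $F,G\subseteq N$ one has $|F\Delta G|=\norm{\chi_F-\chi_G}_1$; since every optimal solution of this instance of (JSC) lies in $J$, this gives $\mu_{\rm DM}(G)=\mu(\chi_G)$ for every $G\in\mathcal{F}$, and $F^*\mapsto\chi_{F^*}$ restricts to a bijection $M_{\rm DM}^*(G)\to M^*(\chi_G)$. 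Using the case analysis of $F\Delta\{i,j\}$ displayed just before Algorithm~\textsc{DM-RefinedGreedy}, together with $J\subseteq\{0,1\}^n$, I would then verify that a move from $F$ to $F\Delta\{i,j\}$ corresponds exactly to the move from $\chi_F$ to $\chi_F+s+t$ in which $s$ is the unit vector flipping $i$ (that is, $s=-\chi_i$ if $i\in F$ and $s=+\chi_i$ if $i\in N\setminus F$), $t={\bf 0}$ if $j=i$, and $t$ is the unit vector flipping $j$ if $j\ne i$; in particular $\chi_{F\Delta\{i,j\}}=\chi_F+s+t$ and $\norm{\chi_F-\chi_{F\Delta\{i,j\}}}_1$ is $1$ if $i=j$ and $2$ otherwise.

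Next I would check that, under this dictionary, the elements $i^*,j^*$ chosen in Step~2 of Algorithm~\textsc{DM-RefinedGreedy} are precisely the vectors $s^*,t^*$ that Step~2 of Algorithm~\textsc{JSC-RefinedGreedy} selects at the point $x=\chi_F$ (which is non-optimal for (JSC), since the algorithm did not halt in Step~1). Indeed, $c(F\Delta\{i\})=f(\chi_F+s)$ with $s$ the flip of $i$; the clause ``$\exists\,j\in N$ with $F\Delta\{i,j\}\in\mathcal{F}$ and $c(F\Delta\{i,j\})<c(F)$'' is exactly ``$\exists\,t\in\univec\cup\{{\bf 0}\}$ with $\chi_F+s+t\in J$ and $f(\chi_F+s+t)<f(\chi_F)$'', the choice $j=i$ corresponding to $t={\bf 0}$; the test ``$F\Delta\{i^*\}\in\mathcal{F}$'' is the test ``$\chi_F+s^*\in J$'', so the branch setting $j^*=i^*$ is the branch setting $t^*={\bf 0}$; and the remaining branch is literally \eqref{eq:descent_t-2}. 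Hence $\chi_F,s^*,t^*$ satisfy the hypotheses of Corollary~\ref{coro:new_min_cut}, with $\chi_F+s^*+t^*=\chi_{F\Delta\{i^*,j^*\}}$ and $t^*={\bf 0}$ if and only if $i^*=j^*$.

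It then remains to read off the two assertions. The distance formula \eqref{eq:num_iteration}, combined with the relation $\mu_{\rm DM}(G)=\mu(\chi_G)$ and the equivalence $t^*={\bf 0}\Leftrightarrow i^*=j^*$, immediately yields the stated value of $\mu_{\rm DM}(F\Delta\{i^*,j^*\})$. For the description of $M_{\rm DM}^*(F\Delta\{i^*,j^*\})$ I would invoke the identity $M^*(\chi_F+s^*+t^*)=M^*(\chi_F)\cap S(\chi_F;s^*,t^*)$ from Corollary~\ref{coro:new_min_cut}, pull it back along $F^*\mapsto\chi_{F^*}$, and unwind membership in $S(\chi_F;s^*,t^*)$ using its explicit formula: since $\chi_F\in\{0,1\}^n$, an inequality $y(i^*)\le\chi_F(i^*)-1$ forces $y(i^*)=0$, which happens exactly when $i^*\in F$ and means $i^*\in N\setminus F^*$, whereas $y(i^*)\ge\chi_F(i^*)+1$ forces $y(i^*)=1$, which happens exactly when $i^*\in N\setminus F$ and means $i^*\in F^*$; the same applies to $j^*$ when $t^*\ne{\bf 0}$. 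Running through the cases of the formula for $S(x;s,t)$ then shows that $\chi_{F^*}\in S(\chi_F;s^*,t^*)$ is equivalent to the two bulleted conditions in the statement, which gives the claimed ``if and only if''.

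The bookkeeping here is routine; the one point I expect to need genuine care is the verification in the second paragraph that the quantifier ``$\exists\,j\in N$'' in Step~2 of \textsc{DM-RefinedGreedy} matches ``$\exists\,t\in\univec\cup\{{\bf 0}\}$'' on the nose, and in particular that restricting to $\{0,1\}$-vectors introduces no spurious moves: the only unit vector $t$ acting on coordinate $i^*$ with $\chi_F+s^*+t\in\{0,1\}^n$ is $t=-s^*$, which returns to $\chi_F$ and is excluded because \eqref{eq:descent_s-2} already guarantees a strictly improving move. Once this correspondence is nailed down, the corollary follows directly from Corollary~\ref{coro:new_min_cut}.
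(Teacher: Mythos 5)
Your proposal is correct and follows exactly the route the paper intends: the paper offers no written proof beyond the remark that the corollary is obtained by specializing Corollary~\ref{coro:new_min_cut} to (DM), and your argument is a careful elaboration of precisely that specialization, including the one genuinely delicate point (that the $\{0,1\}$-constraint silently eliminates the wrong-direction flips and the $t=-s^*$ move, so the quantifiers in the two Step~2's match). Nothing further is needed.
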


\section{Proof of Theorem \ref{th:main_theorem}}
\label{sec:proofs}

 In this section we give a proof of 
Theorem \ref{th:main_theorem}.
 Proofs of some lemmas used in the proof of Theorem \ref{th:main_theorem}
are given in Section \ref{sec:proof_lemmas}. 

Let $x \in J$ be a vector 
that is not an optimal solution of {\rm (JSC)}.
Also, let $s^{*} \in \univec$ and $t^{*} \in \univec \cup \{ {\rm \bf 0} \}$ 
are vectors satisfying \eqref{eq:descent_s-2} and \eqref{eq:descent_t-2}.
 Note that $s^* + t^* \ne \textbf{0}$ holds.
 We will show that $M^*(x) \cap S(x;s^*,t^*) \ne \emptyset$, i.e.,
if vector $x$ moves in the direction $s^*+ t^*$,
then $x$ becomes closer to some optimal solution $x^*$ that is nearest from $x$.
 Recall the definitions of
 $M^*(x)$ and $S(x;s^*,t^*)$ in
\eqref{eqn:def-M*} and in \eqref{eqn:def-Sxst}, respectively.
 In the following, we first consider a special case
where the set of optimal solutions of (JSC)
coincides with $M^*(x)$,
and then the general case where
there exists some optimal solution not in $M^*(x)$.

\subsection{Proof for the  Special Case}
\label{sec:proof-special-case}

 As mentioned above, we assume throughout this subsection that
\begin{equation}
\label{eqn:assump-optsol}
\mbox{the set of optimal solutions of (JSC) coincides with }M^*(x).
\end{equation} 

 By Theorem \ref{th:ex_min_cut}, there exists some $y^* \in M^*(x)$ satisfying
\[
\begin{cases}
     y^{*}(i)  \leq x(i) - 1 & ({\rm if} \ s^{*} = - {\chi}_i),\\
 y^{*}(i)      \geq x(i) + 1 & ({\rm if} \ s^{*} = + {\chi}_i).
\end{cases}    
\]
 This inequality is equivalent to the condition
$s^* \in \inc(x, y^*)$, which is in turn equivalent to
$y^* \in S(x;s^*,\textbf{0})$.
 Thus, we have
$M^*(x) \cap S(x;s^*,\textbf{0}) \ne \emptyset$.
 This shows that Theorem \ref{th:main_theorem} holds if $t^* = \textbf{0}$.
 Hence, we assume $t^* \neq \textbf{0}$ in the following and
prove that there exists some $x^* \in M^*(x)$ 
such that 
\[
 t^* \in \inc(x + s^*, x^*),
\]
which, together with $s^* \in \inc(x, x^*)$,  
implies $M^*(x) \cap S(x;s^*,t^*) \ne \emptyset$.

If vector $x^*$ satisfies $t^* \in \inc(x + s^*, x^*)$, we are done.
 Hence, we assume, to the contrary, 
that 
$t^* \notin \inc(x + s^*, x')$ holds for every $x' \in M^*(x)$
with $s^* \in \inc(x, x')$,  
and derive a contradiction.

 We assume, without loss of generality,
that $t^* = - \chi_{h}$ for some $h \in N$,
and let $x^*$ be a vector in $M^*(x)$ with $s^* \in \inc(x, x^*)$
minimizing the value $x^*(h)$.
 Since $s^* + t^* \ne \textbf{0}$, we have $s^* \ne - t^* =  + \chi_h$,
i.e., $s^* = t^* = - \chi_h$ 
or $s^* = \{+ \chi_{h'}, - \chi_{h'}\}$ for some $h' \in N \setminus \{h\}$.
 If $s^* \ne - \chi_h$, then we have $x(h) = x(h)+ s^*(h) \le x^*(h)$
by $t^* \notin \inc(x + s^*, x^*)$. 
 If $s^* = - \chi_h$, then we have $x(h)= x^*(h)+1$ 
since $s^* \in \inc(x, x^*)$ and $t^* \notin \inc(x + s^*, x^*)$.

 We show that there exists a sequence of vectors 
$x_0 = x + s^* + t^*, x_1, \ldots, x_k = x^*$
that approaches from $x + s^* + t^*$ to $x^*$ in a ``greedy'' manner.

\begin{lem}
\label{lem:x-sequence}
There exist vectors 
\[
 x_j \in J,  \quad
s_j\in U,  \quad 
t_j \in U \cup\{\mbox{\bf 0}\}\qquad (j=1,2,\ldots, k)
\]
satisfying the following conditions:
 \begin{align}
& x_0   = x + s^* + t^*, \quad x_{k} = x^*, 
 \notag\\
& x_j  = x_{j-1} + s_j + t_j
 = x + s^* + t^* + \sum_{1 \le p \le j}(s_p + t_p)
 \quad (j = 1,2,\ldots, k),
 \label{eq:x_j}\\
& s_j      \in \arg \min \{ f(x_{j-1} + s) \mid  s \in \inc (x_{j-1}, x^*),~ 
 \exists t \in \inc(x_{j-1} + s, x^*) \cup \{ {\bf 0} \}\notag \\
    & \hspace*{32mm} \mbox{such that } x_{j-1} + s + t \in J,\ f(x_{j-1} + s + t) < f(x_{j-1}) \}
\notag\\
& \hspace*{88mm}  (j = 1,2,\ldots, k),
 \label{eq:x_j:2}\\
& \label{eq:x_j:3} 
 \left.
\begin{array}{ll}
t_j    = {\bf 0} \hspace*{40mm} ({\rm if}~ x_{j-1} + s_j \in J,\ j = 1,2,\ldots, k), \\
t_j    \in \arg \min \{ f(x_{j-1} + s_j + t) \mid 
 t \in \inc (x_{j-1} + s_j, x^*),  \\
 \hspace*{28mm} 
  x_{j-1} + s_j + t \in J,\  f(x_{j-1} + s_j + t) < f(x_{j-1}) \}\\
 \hspace*{52mm} 
 ({\rm if} ~ x_{j-1} + s_j \notin J,\ j = 1,2,\ldots, k),
    \end{array}
    \right\}
\\
&
    f(x_{j-1} + s_j) - f(x_{j-1}) \le f(x_{j} + s_{j+1}) - f(x_{j})
\quad  (j = 1,2,\ldots, k-1).
\label{eq:sj}
\end{align}
 Moreover, it holds that
\begin{align}
&  \{s_j, t_j \mid j=1,\ldots, k\}\subseteq \{+ \chi_i  \mid i\in N,\ x^*(i)>x_0(i)
 \} 
\notag\\
& \hspace*{43mm}
 \cup \{- \chi_i \mid i\in N,\ x^*(i)<x_0(i) \} \cup \{{\bf 0}\};
 \label{eqn:conformal}
\end{align}
 In particular, the set $\{s_j, t_j \mid j=1,\ldots, k\}$
does not contain $t^*$ and contains
at most one of $+ \chi_i$
and $- \chi_i$ for each $i \in N$.
\end{lem}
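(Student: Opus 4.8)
The plan is to obtain the sequence by running a \emph{target-restricted} version of Algorithm \textsc{JSC-RefinedGreedy}: starting from $x_0=x+s^*+t^*$, at each step make the greedy choice of $s_j$ and then of $t_j$, but only among directions that move conformally toward $x^*$, i.e. insisting $s_j\in\inc(x_{j-1},x^*)$ and $t_j\in\inc(x_{j-1}+s_j,x^*)\cup\{{\bf 0}\}$. The invariant carried along is that $x_{j-1}$ lies conformally between $x_0$ and $x^*$, meaning $\norm{x_{j-1}-x_0}_1+\norm{x_{j-1}-x^*}_1=\norm{x_0-x^*}_1$; this holds for $j=1$ and is preserved, since $s_j\in\inc(x_{j-1},x^*)$ and $t_j\in\inc(x_{j-1}+s_j,x^*)\cup\{{\bf 0}\}$ force $\norm{x_j-x^*}_1=\norm{x_{j-1}-x^*}_1-\norm{s_j+t_j}_1$ with $x_j$ again between $x_0$ and $x^*$. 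As $\norm{s_j+t_j}_1\ge 1$, the quantity $\norm{\cdot-x^*}_1$ strictly decreases, so the process reaches $x_k=x^*$ after finitely many steps, and \eqref{eq:x_j} follows by telescoping.

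The core of the inductive step is to show that, as long as $x_{j-1}\ne x^*$, the arg-min set in \eqref{eq:x_j:2} is nonempty, so that $s_j$ and then $t_j$ are well-defined. For this I would intersect $J$ with the smallest integer box containing $x_{j-1}$ and $x^*$; by the classical closure property of jump systems \cite{Bouchet95}, the result $J_{j-1}$ is again a jump system, it contains both $x_{j-1}$ and $x^*$, and $x^*$ minimizes $f$ over $J_{j-1}$. If $x_{j-1}$ is \emph{not} a minimizer of $f$ over $J_{j-1}$, then Theorem \ref{th:min_SC} applied to $J_{j-1}$ produces $s,t\in\univec\cup\{{\bf 0}\}$ with $x_{j-1}+s+t\in J_{j-1}$ and $f(x_{j-1}+s+t)<f(x_{j-1})$; since $x_{j-1}+s+t$ lies in the box and differs from $x_{j-1}$, a short case analysis on $s,t$ shows that, after swapping $s$ and $t$ if necessary, $s\in\inc(x_{j-1},x^*)$ and $t\in\inc(x_{j-1}+s,x^*)\cup\{{\bf 0}\}$, as required. (That the greedy choice $s_j$ then also satisfies $f(x_{j-1}+s_j)<f(x_{j-1})$, so that the rule \eqref{eq:x_j:3} with $t_j={\bf 0}$ is consistent, follows from the greedy choice of $s_j$ together with the convexity of the $f_i$'s, exactly as in the justification of \eqref{eq:descent_t-2}.)

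The delicate point, which I expect to be the main obstacle, is precisely to show that $x_{j-1}$ is \emph{not} a minimizer of $f$ over $J_{j-1}$ whenever $x_{j-1}\ne x^*$. If it were, then $f(x_{j-1})=f(x^*)$, so $x_{j-1}$ is an optimal solution of (JSC); by the special-case hypothesis \eqref{eqn:assump-optsol} this forces $x_{j-1}\in M^*(x)$. One checks from the invariant together with $x^*(h)>x_0(h)$ (established just before the lemma) that $s^*\in\inc(x,x_{j-1})$ and $x_{j-1}(h)\le x^*(h)$, so the minimality of $x^*(h)$ gives $x_{j-1}(h)=x^*(h)$; the remaining gap to $x_{j-1}=x^*$ is then closed by taking $x^*$, among all candidates with minimal $h$-coordinate, also with $\norm{x^*-x_0}_1$ minimal, for such a vector $x_{j-1}$ would be a candidate strictly closer to $x_0$ with the same $h$-coordinate, a contradiction.

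It remains to verify \eqref{eq:sj} and \eqref{eqn:conformal}. Condition \eqref{eq:sj} is the analogue, for this target-restricted greedy, of the monotonicity statement Theorem \ref{th:monotone-s*}, and is proved along the lines of the corresponding argument in \cite{Ando95}, using the convexity of the $f_i$'s and the greedy choice of $s_j$; some care is needed because the set $\inc(x_{j-1},x^*)$ of admissible directions shrinks along the sequence. Condition \eqref{eqn:conformal} is then immediate: $s_j\in\inc(x_{j-1},x^*)$ is conformal to $x^*-x_{j-1}$, which is conformal to $x^*-x_0$ because $x_{j-1}$ lies conformally between $x_0$ and $x^*$, and likewise for $t_j$; hence no coordinate is ever both incremented and decremented along the sequence, so at most one of $+\chi_i,-\chi_i$ occurs for each $i$, and since $x^*(h)>x_0(h)$ the direction $t^*=-\chi_h$ never occurs.
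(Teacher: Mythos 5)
Your proposal is correct and follows essentially the same route as the paper: restrict $J$ to the integer box spanned by $x_{j-1}$ and $x^*$, invoke the local optimality criterion (Theorem \ref{th:min_SC}) on this restricted jump system to produce the next conformal step, terminate via the strict decrease of $\norm{x_j-x^*}_1$, and obtain \eqref{eq:sj} from the Ando--Fujishige--Naitoh monotonicity. The only divergences are minor: the paper rules out stalling at an optimum other than $x^*$ by showing $x^*$ is the \emph{unique} element of $M^*(x)$ in the box (your extra tie-break on $\norm{x^*-x_0}_1$ is harmless but unnecessary, since any $y\in M^*(x)$ lying in the box with $y(h)=x^*(h)$ already forces $y=x^*$ because all elements of $M^*(x)$ are equidistant from $x$), and it handles the shrinking-direction-set issue you flag for \eqref{eq:sj} by applying Theorem \ref{th:monotone-s*} verbatim to the fixed system $J_j$ and then using $J_{j+1}\subseteq J_j$ to compare the two greedy minima.
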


\begin{proof}
 Suppose that we obtain 
vectors $x_0, x_1, \ldots, x_{j-1} \in J$
and $s_1, \ldots, s_{j-1}$, $t_1, \ldots, t_{j-1}$,
satisfying the conditions  \eqref{eq:x_j},  \eqref{eq:x_j:2},  \eqref{eq:x_j:3}, \eqref{eq:sj}
and \eqref{eqn:conformal}.
 We also assume that neither of $x_0, x_1, \ldots, x_{j-1}$
is an optimal solution of (JSC);
this implies, in particular, that $x^* \ne x_{j-1}$.
 We show that there exist $x_j \in J$, $s_j \in U$, and $t_j \in U\cup\{\textbf{0}\}$ 
satisfying the desired conditions
and the inequality $\|x_j - x^*\|_1 < \|x_{j-1} - x^*\|_1$.
 By repeating this, we obtain the statement of the lemma.

 Let ${J}_{j}$ be a restriction of the jump system $J$ 
given as
\[
 {J}_{j}
= \{y \in J \mid \min(x_{j-1}(i), x^*(i)) \le  y(i)  
\le  \max(x_{j-1}(i), x^*(i))\ (\forall i \in N)\}.
\]
 Then, ${J}_{j} \ne \emptyset$
since $x_{j-1}, x^* \in {J}_{j}$, and therefore
${J}_{j}$ is a jump system.
 By the definition of $M^*(x)$ and 
 the setting of the vector $x^*$ in $M^*(x)$,
the vector $x^*$ is a unique vector in $J_j \cap M^*(x)$.
 This fact, together with the assumption \eqref{eqn:assump-optsol},
implies that $x^*$ is a unique 
optimal solution of {\rm (JSC)} that is contained in $J_j$.

 Since the vector $x^*$ is a unique minimizer 
of $f$ on ${J}_{j}$ and $x_{j-1}$ is not a minimizer,
we can apply Theorem \ref{th:min_SC} to obtain
some vector $s \in U$ and $t \in U \cup \{\textbf{0}\}$
with $s + t \ne \textbf{0}$
satisfying 
\[
 x_{j-1} + s + t \in J_j,
 \qquad
f(x_{j-1} + s + t) < f(x_{j-1}).
\]
 The definition of ${J}_{j}$
and $x_{j-1} + s + t \in {J}_{j}$ imply 
\[
 s \in \inc (x_{j-1}, x^*), \qquad
t \in \inc (x_{j-1}+s, x^*) \cup \{\textbf{0}\}.
\]
 This shows that we can take vectors $s_j$ and $t_j$
satisfying \eqref{eq:x_j:2} and \eqref{eq:x_j:3}, respectively.
 By setting $x_j = x_{j-1}+s_j+t_j$, we obtain  
$\|x_j - x^*\|_1 < \|x_{j-1} - x^*\|_1$
since  $s_j \in \inc (x_{j-1}, x^*)$
and $t_j \in \inc (x_{j-1}+s, x^*) \cup \{\textbf{0}\}$.

 We prove the inequality \eqref{eq:sj}.
 Let $\hat{s} \in U$ be a vector satisfying
\begin{align*}
&   \hat{s} \in 
\arg \min \{ f(x_j + s) \mid s \in \univec,~ \exists t \in \univec \cup \{ {\bf 0} \} \notag \\
& \hspace*{30mm} \mbox{ such that } x_j + s + t \in {J}_{j} 
\mbox{ and } f(x_j + s + t) < f(x_j) \}.
\end{align*}
 By Theorem \ref{th:monotone-s*} applied to the minimization 
of $f$ on ${J}_{j}$, we have
\[
 f(x_{j-1} + s_j) - f(x_{j-1})\le   f(x_{j} + \hat{s}) - f(x_{j}).
\]
 By the choice of $s_{j+1}$ and $\hat{s}$,
we have
\[
 f(x_{j} + \hat{s}) - f(x_{j}) \le f(x_{j} + s_{j+1}) - f(x_{j}).
\]
 Hence, \eqref{eq:sj} follows.

 Finally, the relation \eqref{eqn:conformal} follows
from $s_j \in \inc (x_{j-1}, x^*)$ and $t_j \in \inc (x_{j-1} + s_j, x^*)$.
 This relation and $t^* \notin \inc(x + s^*, x^*)$ 
implies $t^* \notin \{s_j, t_j \mid j=1,\ldots, k\}$.
\end{proof}

 Since $t^* = - \chi_h \notin \inc(x + s^*, x^*)$,
we have $x(h) + s^*(h) \le x^*(h)$, which implies that
\[
x_0(h)= x(h) + s^*(h) + t^*(h) = x(h) + s^*(h)-1 < x^*(h) = x_{k}(h).
\]
 Let $l \in [1, k]$ be the smallest integer with
$x_{l}(h) \ge x(h)+ s^*(h)$.
 Then, we have $x_{l-1}(h) < x(h)+ s^*(h)$.
 Since $x_l(h) = x_{l-1}(h) + s_l(h) + t_l(h)$, 
we have the following two possibilities
for the value $x_{l-1}(h) + s_l(h)$:
\begin{align}
\mbox{\textbf{(CS)} }&  
x_{l-1}(h) + s_l(h) = x(h)+ s^*(h), \ 
s_l = +\chi_h  (= - t^*),
\label{eqn:condS}
\\
\mbox{\textbf{(CT)} }&  
x_{l-1}(h) + s_l(h) = x(h)+ s^*(h)-1,\ 
t_l = +\chi_h (= - t^*).
\label{eqn:condT}
 \end{align}

 The following equation can be obtained by using the separable convexity
of $f$.

\begin{lem}
\label{lem:caseS-equation} 
It holds that
\[
f(x + s^* + t^*) - f(x + s^*)
=
 \begin{cases}
-f(x_{l-1} + s_{l}) + f(x_{l-1})
& (\mbox{if {\rm (CS)} holds}),\\
- f(x_{l-1} + s_{l}+t_l) + f(x_{l-1}+s_l)
& (\mbox{if {\rm (CT)} holds}).
 \end{cases}
\]
\end{lem}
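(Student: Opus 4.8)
The plan is to exploit the separability of $f$: I will show that each of the two differences on the right-hand side, as well as the difference $f(x+s^*+t^*)-f(x+s^*)$ on the left, reduces to a difference of the single univariate function $f_h$ evaluated at the two consecutive integers $x(h)+s^*(h)-1$ and $x(h)+s^*(h)$, so that the asserted equalities become identities. Put $a=(x+s^*)(h)=x(h)+s^*(h)$. Since $t^*=-\chi_h$ by the standing assumption, the vectors $x+s^*$ and $x+s^*+t^*$ coincide in every coordinate other than $h$, where their values are $a$ and $a-1$; hence, by separability, $f(x+s^*+t^*)-f(x+s^*)=f_h(a-1)-f_h(a)$.

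Next I would treat the two cases. In case (CS) we have $s_l=+\chi_h$ and, by \eqref{eqn:condS}, $x_{l-1}(h)+s_l(h)=a$; thus $x_{l-1}(h)=a-1$ and $(x_{l-1}+s_l)(h)=a$, while $s_l$ leaves the other coordinates unchanged, so $f(x_{l-1}+s_l)-f(x_{l-1})=f_h(a)-f_h(a-1)$ and therefore $-f(x_{l-1}+s_l)+f(x_{l-1})=f_h(a-1)-f_h(a)$. In case (CT) we have $t_l=+\chi_h$ and, by \eqref{eqn:condT}, $x_{l-1}(h)+s_l(h)=a-1$; thus $(x_{l-1}+s_l)(h)=a-1$ and $(x_{l-1}+s_l+t_l)(h)=a$, while $t_l$ leaves the other coordinates unchanged, so $f(x_{l-1}+s_l+t_l)-f(x_{l-1}+s_l)=f_h(a)-f_h(a-1)$ and therefore $-f(x_{l-1}+s_l+t_l)+f(x_{l-1}+s_l)=f_h(a-1)-f_h(a)$. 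In either case the right-hand side equals $f_h(a-1)-f_h(a)=f(x+s^*+t^*)-f(x+s^*)$, which is the assertion.

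I do not expect any real obstacle here: the lemma is a bookkeeping identity, and the only point to watch is that $t^*$, and $s_l$ in case (CS) (respectively $t_l$ in case (CT)), are unit vectors supported on the single coordinate $h$, so that all the relevant increments of $f$ decouple into increments of $f_h$ and the values of the remaining coordinates cancel; note also that the precise value of $x_{l-1}(h)+s_l(h)$ is exactly what \eqref{eqn:condS}, \eqref{eqn:condT} supply, so no further case analysis on $s^*$ is needed. In fact only the separability of $f$ is used in this step; the convexity of the $f_i$ plays no role, and the coordinates of $x_{l-1}$ other than $h$ never enter the computation.
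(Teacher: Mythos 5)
Your proof is correct and follows essentially the same route as the paper's: reduce all three differences to increments of the single univariate function $f_h$ between the consecutive integers $x(h)+s^*(h)-1$ and $x(h)+s^*(h)$, using \eqref{eqn:condS} and \eqref{eqn:condT} to locate $x_{l-1}(h)$ (resp.\ $x_{l-1}(h)+s_l(h)$) at the lower of the two values. Your observation that only separability, not convexity, is needed here is also accurate.
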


\begin{proof}
 Since $t^* = - \chi_h$ in both cases,
we have 
\begin{align}
&  f(x + s^* + t^*) - f(x + s^*) \notag\\
& = f_h(x(h)+s^*(h)+ t^*(h)) - f_h(x(h)+s^*(h) ) \notag\\
& = f_h(x(h)+s^*(h)- 1) - f_h(x(h)+s^*(h) ) \notag\\
&
= 
\begin{cases}
f_h(x_{l-1}(h)) - f_h(x_{l-1}(h)+1)
 & (\mbox{if (CS) holds}),\\
f_h(x_{l-1}(h) + s_l(h)) - f_h(x_{l-1}(h) + s_l(h) +1)
 & (\mbox{if (CT) holds}).
\end{cases}
\label{eqn:lem:caseS-equation:1} 
\end{align}
 The conditions (CS) and (CT) imply  $s_l = + \chi_h$ 
and $t_l = + \chi_h$, respectively,  from which follows that
\begin{align}
&f(x_{l-1} + s_{l}) - f(x_{l-1})\notag\\
& =  f_h(x_{l-1}(h) + s_{l}(h)) - f_h(x_{l-1}(h))\notag\\
& =  f_h(x_{l-1}(h) + 1) - f_h(x_{l-1}(h))
\quad  (\mbox{if {\rm (CS)} holds}),
\label{eqn:lem:caseS-equation:2} \\
&f(x_{l-1} + s_{l}+t_l) - f(x_{l-1}+s_l)\notag\\
& =  f_h(x_{l-1}(h) + s_{l}(h)+t_l(h)) - f_h(x_{l-1}(h)+s_l(h))\notag\\
&
 =  f_h(x_{l-1}(h) + s_{l}(h)+ 1) - f_h(x_{l-1}(h)+ s_{l}(h))
\quad  (\mbox{if {\rm (CT)} holds}).
\label{eqn:lem:caseS-equation:3} 
\end{align}
The statement of the lemma follows
from \eqref{eqn:lem:caseS-equation:1},
\eqref{eqn:lem:caseS-equation:2}, and \eqref{eqn:lem:caseS-equation:3}.
\end{proof}

 Since $s_l + t_* = \textbf{0}$ or $t_l + t_* = \textbf{0}$ holds,
we have
\[
    x_{l} = x + s^* + \sum_{p \in S}s_p + \sum_{q \in T}t_q
\]
with the index sets $S, T$ given by
\begin{align}
\begin{cases}
S = \{1, 2, \ldots l-1\}, \quad  T = \{1, 2, \ldots l\}  
& (\mbox{if (CS) holds}),\\
S = \{1, 2, \ldots l\}, \quad T = \{1, 2,\ldots l-1\}  
& (\mbox{if (CT) holds}).
\end{cases}
\label{eqn:defST}
\end{align}
 We also define a partition of the set $T$ into 
three sets $T_0, T_-, T_+$ as
\begin{align*}
    T_0 &= \{j \in T \mid  t_j = \textbf{0}\}, \\\
    T_- &= 
\{j \in T \setminus T_0 \mid f(x_{j-1} + s_j + t_j) - f(x_{j-1} + s_j) < f(x + s^* + t^*) - f(x + s^*)\},\\
    T_+ &= T \setminus (T_- \cup T_0).
\end{align*}

\begin{lem}
\label{lem:l_in_T-}
{\rm (i)}  
If the condition {\rm (CT)} holds, then 
$T_0$, $T_-$, and $T_+$ are contained in $\{1,2,\ldots, l-1\}$.\\
{\rm (ii)}
If the condition {\rm (CS)}  holds, then 
$l \in T_0 \cup T_-$. 
\end{lem}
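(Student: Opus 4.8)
The plan is to treat the two parts separately; both reduce quickly to the definitions together with Lemma~\ref{lem:caseS-equation}, so no heavy machinery is involved. Part~(i) is immediate: when (CT) holds, \eqref{eqn:defST} gives $T=\{1,2,\ldots,l-1\}$, and since $T_0\cup T_-\cup T_+=T$ by construction, each of the three sets is contained in $\{1,2,\ldots,l-1\}$.

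For part~(ii), when (CS) holds \eqref{eqn:defST} gives $T=\{1,2,\ldots,l\}$, so $l\in T$; since $T=T_0\cup T_-\cup T_+$ it suffices to show $l\notin T_+$. If $t_l=\textbf{0}$ then $l\in T_0$ and we are done, so assume $t_l\neq\textbf{0}$. Then the choice rule \eqref{eq:x_j:3} for $t_l$ forces us into the branch $x_{l-1}+s_l\notin J$, whence in particular $f(x_{l-1}+s_l+t_l)<f(x_{l-1})$. Subtracting $f(x_{l-1}+s_l)$ from both sides and invoking Lemma~\ref{lem:caseS-equation} in the (CS) case, which identifies $f(x_{l-1})-f(x_{l-1}+s_l)$ with $f(x+s^*+t^*)-f(x+s^*)$, gives
\[
f(x_{l-1}+s_l+t_l)-f(x_{l-1}+s_l) < f(x+s^*+t^*)-f(x+s^*),
\]
which is exactly the inequality defining membership of $l$ in $T_-$; hence $l\in T_-$.

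The only point needing attention --- and hence the ``main obstacle'', mild as it is --- is recognizing that the descent inequality $f(x_{l-1}+s_l+t_l)<f(x_{l-1})$ that is built into the definition of $t_l$ precisely when $x_{l-1}+s_l\notin J$ becomes, after the rearrangement above and the substitution from Lemma~\ref{lem:caseS-equation}, the very inequality that defines $T_-$. I would also record at the start that $l\in[1,k]$ is legitimate, so that $x_{l-1},s_l,t_l$ and Lemma~\ref{lem:caseS-equation} are available; this holds because $x_0(h)<x^*(h)=x_k(h)$ forces $k\ge l\ge 1$.
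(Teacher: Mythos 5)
Your proof is correct and follows essentially the same route as the paper: part (i) from the definition of $T$ under (CT), and part (ii) by combining the descent inequality $f(x_{l-1}+s_l+t_l)<f(x_{l-1})$ from the choice of $t_l$ with the identity of Lemma~\ref{lem:caseS-equation} in the (CS) case to obtain the defining inequality of $T_-$. The extra remark that $l\in[1,k]$ is well defined is a harmless addition.
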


\begin{proof}
 The claim (i) holds immediately
since $T =\{1, \ldots, l-1\}$ by definition in the case of (CT).
 We then suppose  that (CS) holds and show that  
$l \in T_-$ holds if $t_l \ne \bf{0}$. 

 By the choice of $s_j$ and $t_j$ in 
\eqref{eq:x_j:2} and \eqref{eq:x_j:3}, we have
\[
    f(x_{l-1} + s_l + t_l) < f(x_{l-1}),
\]
which, together with Lemma \ref{lem:caseS-equation}, implies
\begin{align*}
 f(x + s^* + t^*) - f(x + s^*)
& =
-f(x_{l-1} + s_{l}) + f(x_{l-1})\\
& >   f(x_{l-1} + s_l + t_l) - f(x_{l-1} + s_l),
\end{align*}
which shows that $l \in T_-$.
\end{proof}

 In the following two lemmas we show that there exist a sequence of
vectors in $J$ that are represented by
vectors $x_j$ and unit vectors in $\{s_p \mid p \in S\} \cup \{t_q \mid q \in T_-\}$.
 Proofs are given in Sections \ref{sec:proof2}
and \ref{sec:proof3}.

\begin{lem}
\label{le:y1}
There exist $P_0 \subseteq S$ and $Q_0 \subseteq T_-$ such that 
\begin{align}
 \label{eq:y0}
& x + s^* + t^* + \sum_{p \in P_0}s_p + \sum_{q \in Q_0}t_q \in J,\\
& |P_0| + |Q_0| \ge |S| + |T_-| - |T_+| + 1. 
\label{eqn:yST-seq:3}
\end{align}
\end{lem}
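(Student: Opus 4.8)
The plan is to build the desired subset of exchange vectors by starting from $x^*$ and applying the jump-system exchange axiom (J-EXC) repeatedly to "peel off'' unit vectors until we return to a vector of the form $x + s^* + t^* + \sum_{p \in P_0}s_p + \sum_{q \in Q_0}t_q$. Recall from Lemma~\ref{lem:x-sequence} (and \eqref{eqn:defST}) that
\[
x^* = x_l + \sum_{p \in S, \, p > l'}s_p + \sum_{q \in T, \, q > l''}t_q
\quad\text{and}\quad
x_l = x + s^* + \sum_{p \in S}s_p + \sum_{q \in T}t_q,
\]
so the full collection $\{s_p \mid p \in S\} \cup \{t_q \mid q \in T\}$ is a conformal decomposition (by \eqref{eqn:conformal}) of $x^* - (x + s^*)$, and hence $\{s_p \mid p \in S\} \cup \{t_q \mid q \in T \setminus T_0\}$ is a conformal decomposition of $x^* - (x + s^* + t^*)$ once we absorb $t^*$, which cancels with $s_l$ or $t_l$. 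First I would set up this bookkeeping carefully: let $z = x + s^* + t^*$, and note $\|x^* - z\|_1 = |S| + |T_-| + |T_+|$ (the vectors in $T_0$ contribute nothing), with $\inc(z, x^*)$ containing exactly the unit vectors $\{s_p \mid p \in S\} \cup \{t_q \mid q \in T_- \cup T_+\}$ (as a multiset; by conformality there are no cancellations among these).

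Next I would run a greedy descent from $x^*$ back toward $z$. Since both $x^*$ and $z$ lie in $J$ but intermediate points need not, I would iteratively apply (J-EXC) with the roles $x \leftarrow x^*$, $y \leftarrow z$: pick any $s \in \inc(x^*, z)$; if $x^* + s \in J$ take one step, otherwise (J-EXC) furnishes $t \in \inc(x^* + s, z)$ with $x^* + s + t \in J$, and we take a double step. Each step decreases $\|\cdot - z\|_1$ by $1$ or $2$, and every unit vector used is conformal to $z - x^*$, i.e.\ lies in $-\bigl(\{s_p \mid p \in S\} \cup \{t_q \mid q \in T_- \cup T_+\}\bigr)$. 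The process terminates at $z$ itself. Running the trajectory in reverse gives a sequence in $J$ from $z$ to $x^*$ in which the unit vectors are, as a sub-multiset, exactly $\{s_p \mid p \in S\} \cup \{t_q \mid q \in T_- \cup T_+\}$; reading off any proper prefix of this reversed trajectory gives an element of $J$ of the form $z + \sum_{p \in P}s_p + \sum_{q \in Q}t_q$ with $P \subseteq S$ and $Q \subseteq T_- \cup T_+$.

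The remaining task — and I expect this to be the main obstacle — is to control which vector we land on so that $Q \subseteq T_-$ (not $T_-\cup T_+$) while retaining the counting bound $|P| + |Q| \ge |S| + |T_-| - |T_+| + 1$. The idea is that each time a "bad'' step introduces a $t_q$ with $q \in T_+$, we can instead argue, using the function-value inequalities defining $T_+$ versus $T_-$ together with separable convexity of $f$ and Theorem~\ref{th:monotone-s*}-style monotonicity, that such a step can be "charged'' against removing a $T_+$-index rather than adding one; equivalently, we stop the reversed trajectory at the first point where all of $\{s_p \mid p\in S\}\cup\{t_q\mid q\in T_-\}$ has been used but at most $|T_+| - 1$ of the $T_+$-vectors remain to be undone. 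A careful accounting then yields $|P_0| + |Q_0| \ge |S| + |T_-| - (|T_+| - 1) = |S| + |T_-| - |T_+| + 1$, which is \eqref{eqn:yST-seq:3}. The delicate point is showing that the greedy steps can always be ordered to exhaust the $T_-$-vectors and $S$-vectors before being forced to reintroduce more than $|T_+|-1$ of the $T_+$-vectors; this is where I would invoke the optimality characterization (Theorem~\ref{th:min_SC}) on the restricted jump system $J_l$ and the definitions of $T_-$ and $T_+$ in terms of the marginal decrements $f(x_{j-1}+s_j+t_j) - f(x_{j-1}+s_j)$ compared with the fixed quantity $f(x+s^*+t^*) - f(x+s^*)$.
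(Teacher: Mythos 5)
There is a genuine gap. Your construction (peeling off unit vectors via (J-EXC) along a conformal path between $x+s^*+t^*$ and $x^*$) only delivers points of the form $x+s^*+t^*+\sum_{u\in\Phi'}u$ with $\Phi'$ a subset of $\{-t^*\}\cup\{s_p\mid p\in S\}\cup\{t_q\mid q\in T_-\cup T_+\}$ --- and, since you peel from $x^*=x_k$ rather than from $x_l$, also vectors $s_j,t_j$ with $j>l$ that are not even in $S\cup T$; the correct witness to start from is $x_l$. To land in the form required by the lemma you must exclude \emph{two} families from $\Phi'$: the vector $-t^*$ and every $t_q$ with $q\in T_+$. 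Your proposal does not address the first at all, and for the second it only states the goal (``charge each bad step against removing a $T_+$-index'') without an argument. In particular, the claim that one can ``stop the reversed trajectory at the first point where all of $S\cup T_-$ has been used but at most $|T_+|-1$ of the $T_+$-vectors remain'' is unsupported: the order in which (J-EXC) peels off vectors is not under your control, and there is no reason such a point exists on any single trajectory.

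The paper supplies exactly the two missing ingredients. First, an auxiliary lemma shows that \emph{no} point of the form $x+s^*+\sum_{p\in P}s_p+\sum_{q\in Q}t_q$ with $P\subseteq S$, $Q\subseteq T_-$ lies in $J$; this is a function-value argument combining (J-EXC), Proposition~\ref{pr:sep}(i), Lemma~\ref{le:gradient3}, and the minimality of $t^*$ in \eqref{eq:descent_t-2}, and it is what forces $-t^*\notin\Phi'$. Second, instead of following a trajectory, the paper makes an extremal choice: among all feasible subsets $\Phi'$ of $\Phi=\{-t^*\}\cup\{s_p\mid p\in S\}\cup\{t_q\mid q\in T_+\cup T_-\}$ it maximizes $|\Phi'|-(2+\varepsilon)\,|\{q\in T_+\mid t_q\in\Phi'\}|$. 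Applying (J-EXC) with $-t_q$ to a maximizer containing some $t_q$, $q\in T_+$, produces another feasible subset losing at most two elements but one full $T_+$-element, strictly increasing the objective --- a contradiction. Since $\Phi$ itself is feasible (it gives $x_l\in J$) with $|\Phi|=1+|S|+|T_-|+|T_+|$, and each $T_+$-removal costs at most $2$, the maximizer has size at least $1+|S|+|T_-|-|T_+|$, which is \eqref{eqn:yST-seq:3}. Your intuition about ``charging'' is pointing at this weighted maximization, but without the extremal formulation and the two exclusion arguments the proof does not go through.
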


\begin{lem}
\label{lem:key-lemma-proofofTh5}
Suppose that 
\[
x_{j-1} + \sum_{p \in P_{j-1}}s_p + \sum_{q \in Q_{j-1}}t_q \in J 
\]
for some
\[
 P_{j-1} \subseteq S \cap \{j, \ldots, l\}, 
\quad Q_{j-1} \subseteq T_- \cap \{j, \ldots, l\}.
\]
Then, there exist some $P_j, Q_j$ such that
\begin{align}
  &x_j + \sum_{p \in P_j}s_p + \sum_{q \in Q_j}t_q \in J, \label{eq:y_j'} \\
   & P_j \subseteq P_{j-1} \cap \{j + 1, \ldots, l\}, 
\qquad Q_j \subseteq Q_{j-1} \cap \{j + 1, \ldots, l\}, \label{eq:P_jQ_j'}\\
&
\begin{cases}
 |P_j| + |Q_j| \ge |P_{j-1}| + |Q_{j-1}| - 2 & (\mbox{if }j \in T_-), 
\\
 |P_j| + |Q_j| \ge |P_{j-1}| + |Q_{j-1}| - 1 & 
(\mbox{if } j \in T_0), \\
 |P_j| + |Q_j| = |P_{j-1}| + |Q_{j-1}| & (\mbox{if } j \in T_+).
\end{cases}
\label{eq:num_pq_t-0+'}
\end{align}
\end{lem}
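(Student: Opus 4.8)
The plan is to treat Lemma~\ref{lem:key-lemma-proofofTh5} as a single ``fold-in'' step of an induction that transports a certificate of membership in $J$ from the anchor $x_{j-1}$ to the anchor $x_j=x_{j-1}+s_j+t_j$. Writing $y_{j-1}:=x_{j-1}+\sum_{p\in P_{j-1}}s_p+\sum_{q\in Q_{j-1}}t_q\in J$, I want to absorb $s_j$ and, when it is nonzero, $t_j$ into the sum, discarding as few of the ``future'' unit vectors $s_p,t_q$ $(j<p,q\le l)$ as the bound \eqref{eq:num_pq_t-0+'} permits, and so obtain $y_j=x_j+\sum_{p\in P_j}s_p+\sum_{q\in Q_j}t_q\in J$ of the same shape. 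Everything is driven by the exchange axiom (J-EXC). The first thing I would record is a bookkeeping fact used repeatedly below: since $s_1,t_1,\dots,s_l,t_l$ are conformal and point from $x_{j-1}$ towards $x^*$ (Lemma~\ref{lem:x-sequence}, \eqref{eqn:conformal}), for every coordinate $i$ the differences $x_l(i)-y_{j-1}(i)$ and $x_j(i)-y_{j-1}(i)$ are governed entirely by which moves of steps $j,\dots,l$ are already present in $y_{j-1}$, and this pins down, in each application of (J-EXC), precisely which unit vectors can occur as the exchange partner.

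For the mechanics I would carry out every exchange inside a suitable restricted jump system --- a box $\{y\in J\mid \min(a(i),b(i))\le y(i)\le \max(a(i),b(i))\ (\forall i\in N)\}$ whose corners $a,b$ are chosen among $x_{j-1},x_j,y_{j-1},x_l$, the relevant such boxes being nonempty and hence jump systems --- exactly as the restriction $J_j$ is used in the proof of Lemma~\ref{lem:x-sequence}. Restricting the ambient jump system is precisely what keeps every exchange partner inside the allowed set $\{t_j\}\cup\{s_p,t_q:j<p,q\le l\}$, so that the outcome is again of the required shape with $P_j\subseteq P_{j-1}\cap\{j+1,\dots,l\}$ and $Q_j\subseteq Q_{j-1}\cap\{j+1,\dots,l\}$. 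The basic move is then: add the unit vector to be absorbed to $y_{j-1}$ --- first $s_j$ if $j\notin P_{j-1}$, then $t_j$ if it is still nonzero and not yet absorbed --- by applying (J-EXC) towards the appropriate corner; either the sum already lies in $J$ and nothing is spent, or (J-EXC) returns a partner which, by the choice of corner, is either $t_j$ itself --- so $s_j$ and $t_j$ get absorbed at once --- or the negative of a move already recorded in $y_{j-1}$, which we reinterpret as deleting that one index from $P_{j-1}$ or $Q_{j-1}$.

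Counting the deletions then yields \eqref{eq:num_pq_t-0+'}: for $j\in T_0$ one has $t_j=\mathbf{0}$, only $s_j$ is absorbed, at most one deletion occurs, and $|P_j|+|Q_j|\ge|P_{j-1}|+|Q_{j-1}|-1$; for $j\in T_-$ both $s_j$ and $t_j$ are absorbed, with at most one deletion each, giving the bound $-2$. The tight and hardest case is $j\in T_+$, where \eqref{eq:num_pq_t-0+'} demands \emph{equality}, so no deletion may occur. Two points must be settled here. First, a side condition that has to be threaded through the whole construction together with Lemma~\ref{le:y1}: when $j\in T_+$ the move $s_j$ is not already present in $y_{j-1}$, i.e.\ $j\notin P_{j-1}$; this is genuinely needed, because $j\in P_{j-1}$ would force $P_j\subseteq P_{j-1}\setminus\{j\}$ and hence $|P_j|+|Q_j|<|P_{j-1}|+|Q_{j-1}|$, so any stray $T_+$-index in $P_0$ must be evicted at an earlier step in $T_-\cup T_0$ where there is spare budget, and the deletions made at those steps must be chosen accordingly. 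Second, granting $j\notin P_{j-1}$, one must show $y_{j-1}+s_j+t_j\in J$ outright, with no deletion. Here I would combine $x_{j-1}+s_j\notin J$ (which holds since $j\in T_+$ implies $t_j\neq\mathbf{0}$ by \eqref{eq:x_j:3}) with the defining inequality $f(x_{j-1}+s_j+t_j)-f(x_{j-1}+s_j)\ge f(x+s^*+t^*)-f(x+s^*)$ of $T_+$: if the partner produced by (J-EXC) for $s_j$ were a future $s_p$ or $t_q$ rather than $t_j$, then separable convexity of $f$ would let us re-order the absorbed moves into a strictly cheaper descent sequence on the restricted jump system, contradicting the greedy and monotone selection rules \eqref{eq:x_j:2}--\eqref{eq:x_j:3} and \eqref{eq:sj} (Theorem~\ref{th:monotone-s*}) that determine the sequence $(x_p,s_p,t_p)$; hence the partner must be $t_j$ and nothing is spent.

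The main obstacle is exactly this $j\in T_+$ case: it is the single point where the purely combinatorial exchange argument does not suffice by itself, and one has to inject the optimization data --- the greedy rules \eqref{eq:x_j:2}--\eqref{eq:x_j:3}, the inequality defining $T_+$, Theorem~\ref{th:monotone-s*}/\eqref{eq:sj}, and the separable convexity of $f$ --- to rule out a wasteful exchange, while at the same time verifying that the invariant ``a $T_+$-index is never already present'' propagates from step to step. By contrast, the cases $j\in T_0$ and $j\in T_-$, as well as checking the containments in \eqref{eq:P_jQ_j'}, are routine once the exchanges are performed inside the appropriate restricted jump systems.
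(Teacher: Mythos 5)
Your skeleton matches the paper's: absorb $s_j$ and then $t_j$ into the certificate $y=x_{j-1}+\sum_{p\in P_{j-1}}s_p+\sum_{q\in Q_{j-1}}t_q$ via (J-EXC), let the exchange partner dictate at most one deletion per absorbed unit vector, and observe that the only delicate case is $j\in T_+$, where equality is required. The conformality bookkeeping and the case analysis for $j\in T_-\cup T_0$ are essentially the paper's Cases 1, 2a--2c. However, your treatment of the $T_+$ case has a genuine gap, in two places.

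First, you propose to handle the requirement $j\notin P_{j-1}$ (for $j\in T_+$) by threading an invariant through the construction, ``evicting'' stray $T_+$-indices at earlier $T_-\cup T_0$ steps by choosing the deletions accordingly. This cannot work as stated: the deletions are not free choices but are dictated by whichever partner $w$ the axiom (J-EXC) returns, and moreover the lemma is quantified over \emph{arbitrary} $P_{j-1}\subseteq S\cap\{j,\dots,l\}$ and $Q_{j-1}\subseteq T_-\cap\{j,\dots,l\}$ satisfying the membership hypothesis, so a proof that only covers sets produced by your particular earlier choices proves a weaker statement than the one needed. The paper instead \emph{derives} $s_p\ne s_j$ for all $p\in P_{j-1}$ (hence $j\notin P_{j-1}$) from the hypothesis $y\in J$ itself (Lemma~\ref{lem:tj-in-T+}): if some $s_{p'}=s_j$, then $-t_j\in\inc(x_j,y)$ and (J-EXC) produces a point $x_j-t_j+w=x_{j-1}+s_j+w\in J$ with $w$ among the future moves, contradicting the non-membership facts of Lemma~\ref{le:t_+}.

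Second, those non-membership facts are the engine of the whole $T_+$ case, and your proposal only gestures at them (``re-order the absorbed moves into a strictly cheaper descent sequence''). The actual mechanism is different and two-layered: (i) for every future $s_p$ and every future $t_q$ with $q\in T_-$, one shows $f(x_{j-1}+s_j+s_p)<f(x_{j-1}+s_j+t_j)$ and $f(x_{j-1}+s_j+t_q)<f(x_{j-1}+s_j+t_j)$ using Proposition~\ref{pr:sep}(i), Lemma~\ref{le:gradient3}, and the defining inequality of $T_+$, whence $x_{j-1}+s_j+s_p\notin J$ and $x_{j-1}+s_j+t_q\notin J$ by the \emph{minimality} of $t_j$ in \eqref{eq:x_j:3} --- this yields non-membership, not a contradiction with a descent sequence; (ii) a further round of (J-EXC) applications (towards $x_j-t_j$) transfers these facts to show $y+s_j\notin J$, $y+s_j-s_p\notin J$, $y+s_j-t_q\notin J$, which finally forces the partner in the exchange $y\to y+s_j$ to be $t_j$, so that $y+s_j+t_j\in J$ with no deletion. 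Without step (i) made precise and step (ii) carried out, the equality case of \eqref{eq:num_pq_t-0+'} is not established.
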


 Using these lemmas, we derive a contradiction.
 Lemmas \ref{le:y1} and \ref{lem:key-lemma-proofofTh5} imply the existence
of a sequence $(P_0, Q_0)$, $(P_1, Q_1), \ldots, (P_{l-1}, Q_{l-1})$
of sets satisfying the conditions
\eqref{eq:y0}, \eqref{eqn:yST-seq:3},
\eqref{eq:y_j'}, \eqref{eq:P_jQ_j'}, and \eqref{eq:num_pq_t-0+'}.
  Therefore, it holds that
$P_{l-1} \subseteq S \cap \{l\}$ and $Q_{l-1} \subseteq T_- \cap \{l\}$.
 Since $l$ is contained in at most one of $S$, $T_0$, and $T_-$
(see \eqref{eqn:defST}),  we have 
\begin{equation}
\label{eq:p+q}
|P_{l-1}| + |Q_{l-1}| \le 
 \begin{cases}
0
&\! (\mbox{if (CS) and $l \in T_0$ hold}),\\
1
&\! (\mbox{otherwise}),
\end{cases}
\end{equation}

 On the other hand, repeated use of the inequality \eqref{eq:num_pq_t-0+'} 
implies that
\begin{align}
&    |P_{l-1}| + |Q_{l-1}| 
\notag\\
& \ge |P_0| + |Q_0| 
- 0 \cdot |\{t_q \mid 1\le q \le l-1,\ q \in T_+\}| \notag\\
&  \qquad - 1 \cdot |\{t_q \mid 1\le q \le l-1,\ q \in T_0\}| 
- 2 \cdot |\{t_q \mid 1\le q \le l-1,\ q \in T_-\}|\notag\\
& =
 |P_0| + |Q_0| +
\begin{cases}
(-1)  |T_0| +
 (-2)  (|T_-| - 1) 
& (\mbox{if (CS) and $l \in T_-$ hold}),\\
 (-1)  (|T_0| -1) +   (-2)  |T_-|
& (\mbox{if (CS) and $l \in T_0$ hold}),\\
 (-1)  |T_0| +  (-2)  |T_-| 
& (\mbox{if (CT) holds}),
\end{cases}
\notag
\\
& =
 |P_0| + |Q_0|  - |T_0| -  2 |T_-| +
\begin{cases}
 2
&\! (\mbox{if (CS) and $l \in T_-$ hold}),\\
1
&\! (\mbox{if (CS) and $l \in T_0$ hold}),\\
0
&\! (\mbox{if (CT) holds}),
\end{cases}
\label{eqn:Th5:1}
\end{align}
where the equality is by Lemma \ref{lem:l_in_T-}.
 We have
\begin{align}
|P_0| + |Q_0| -  |T_0| - 2|T_-| & \ge  
(|S| + |T_-| - |T_+| + 1 ) - (|T| - |T_-| - |T_+|) -  2|T_-|
\notag\\
&=
|S| - |T|  + 1
 =
\begin{cases}
0
& (\mbox{if (CS) holds}),\\
2 
& (\mbox{otherwise}),
\end{cases}
 \label{eq:num_T0}
\end{align}
where the inequality is by Lemmas \ref{le:y1}
and the second equality is by
the definition \eqref{eqn:defST}  of $S$ and $T$.
 It follows from \eqref{eqn:Th5:1} and \eqref{eq:num_T0}
that 
\[
|P_{l-1}|+|Q_{l-1}| \ge
\begin{cases}
1
&\! (\mbox{if (CS) and $l \in T_0$ hold}),\\
2
&\! (\mbox{otherwise}),
\end{cases}
\]
a contradiction to the inequality \eqref{eq:p+q}.
 This concludes the proof of Theorem~\ref{th:main_theorem}
in the case where the set of optimal solutions of (JSC)
coincides with $M^*(x)$.

\subsection{Proof for the General Case}

 We finally consider the general case
where the set of optimal solutions of (JSC) is distinct from $M^*(x)$,
and derive a contradiction.
 In this case, we apply the result obtained in Section \ref{sec:proof-special-case}
to the problem (JSC) with $f$ replaced by
its perturbed function $f_\varepsilon$ given as
\[
 f_\varepsilon(y) = f(y) + \varepsilon \norm{y - {x}}_1,
\]
where $\varepsilon$ is a sufficiently small positive number.
 
Due to the additional term $\varepsilon \norm{y - {x}}_1$
in $f_\varepsilon$, the set of minimizers of 
$f_\varepsilon(x)$ under the constraint $x \in J$
coincides with $M^*(x)$.
 Moreover, the vectors 
$s^{*} \in \univec$ and $t^{*} \in \univec \cup \{ {\rm \bf 0} \}$ 
satisfy the conditions \eqref{eq:descent_s-2} and \eqref{eq:descent_t-2}
if and only if $s^*$ and $t^*$ satisfy
the same conditions with $f$ replaced by $f_\varepsilon$, i.e.,
\begin{align}
&    s^* \in \arg \min \{ f_\varepsilon(x + s) \mid s \in \univec,~ \exists t \in \univec \cup \{ {\bf 0} \} \notag \\
& \hspace*{40mm} \mbox{ such that } x + s + t \in J \mbox{ and } 
f_\varepsilon(x + s + t) < f_\varepsilon(x) \}
\label{eqn:s*-epsilon}
\end{align}
and
 \begin{equation}
    t^* \in \arg \min \{ f_\varepsilon(x + s^* + t) 
\mid t \in \univec,\ x + s^* + t \in J \}.
\label{eqn:t*-epsilon}
\end{equation}
 The equivalence between \eqref{eq:descent_s-2} and
\eqref{eqn:s*-epsilon} follows since
\[
f_\varepsilon(x + s) = f(x) + \varepsilon \qquad  (s \in \univec).
\]
 The equivalence between \eqref{eq:descent_t-2} and
\eqref{eqn:t*-epsilon} follows since
under the condition $f_\varepsilon(x + s^* + t) < f_\varepsilon(x)$
we have $t \ne - s^*$ and therefore
\[
f_\varepsilon(x + s^* + t) = f(x+ s^*) + \varepsilon \qquad  
(t \in \univec\setminus\{-s^*\}).
\]
 Hence, we can apply the same proof to the problem
with the function $f_\varepsilon$ 
and obtain $M^*(x) \cap S(x;s^*,t^*) \ne \emptyset$.

\section{Proofs of Lemmas}
\label{sec:proof_lemmas}

 In this section, we give proofs of Lemmas \ref{le:y1}
and \ref{lem:key-lemma-proofofTh5}, where
we use the following properties.
 For $x\in \mathbb{Z}^n$, 
we define $\supp{x} = \{i \in N \mid x(i) \neq 0 \}$.

\begin{pro}
\label{pr:sep}
Let $f:\mathbb{Z}^n \rightarrow \mathbb{R}$ be a separable convex function.\\
{\rm (i)} For every $x, y \in \mathbb{Z}^n$ and every $s \in {\rm inc}(x, y)$, we have
\[
    f(x) + f(y) \geq f(x + s) + f(y - s).
\]
{\rm (ii)} For every $x\in \mathbb{Z}^n$ and 
every $s, t \in \univec$ with $\supp{s}\neq \supp{t}$, we have
\[
    f(x + s + t) - f(x) = \{ f(x + s) - f(x) \} + \{ f(x + t) - f(x) \}.
\]
\end{pro}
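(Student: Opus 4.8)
The plan is to reduce both assertions to the one-variable structure of $f$, namely $f(y)=\sum_{i\in N}f_i(y(i))$ with each $f_i:\mathbb{Z}\to\mathbb{R}$ convex. The key one-variable fact is the discrete exchange inequality: if $g:\mathbb{Z}\to\mathbb{R}$ is convex and $a,b\in\mathbb{Z}$ with $a<b$, then $g(a)+g(b)\ge g(a+1)+g(b-1)$; indeed, convexity means the forward differences $g(m+1)-g(m)$ are nondecreasing in $m$, so (using $a\le b-1$) $g(a+1)-g(a)\le g(b)-g(b-1)$, which rearranges to the claim, with equality in the degenerate case $b=a+1$.

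For part (i), I would first use $s\in\inc(x,y)$ to write $s=\sigma\chi_i$ for a single index $i\in N$ and a sign $\sigma\in\{+1,-1\}$ with $\sigma\,(y(i)-x(i))>0$; in particular $x(i)\ne y(i)$. Since $f$ is separable and the four vectors $x,y,x+s,y-s$ agree in every coordinate other than $i$, all terms $f_j$ with $j\ne i$ cancel, and the inequality $f(x)+f(y)\ge f(x+s)+f(y-s)$ reduces to $f_i(x(i))+f_i(y(i))\ge f_i(x(i)+\sigma)+f_i(y(i)-\sigma)$. Writing $a=\min\{x(i),y(i)\}$ and $b=\max\{x(i),y(i)\}$, one checks that $\{x(i)+\sigma,\ y(i)-\sigma\}=\{a+1,\ b-1\}$, so this is exactly $g(a)+g(b)\ge g(a+1)+g(b-1)$ for $g=f_i$ and integers $a<b$, which is the one-variable fact above.

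For part (ii), since $s,t\in U$ have singleton supports and $\supp{s}\ne\supp{t}$, I can write $s=\sigma\chi_i$, $t=\tau\chi_j$ with $i\ne j$. Then $x$, $x+s$, $x+t$, $x+s+t$ differ from one another only in coordinates $i$ and $j$, with $(x+s+t)(i)=x(i)+\sigma$ and $(x+s+t)(j)=x(j)+\tau$, so separability gives
\[
 f(x+s+t)-f(x)=\bigl(f_i(x(i)+\sigma)-f_i(x(i))\bigr)+\bigl(f_j(x(j)+\tau)-f_j(x(j))\bigr),
\]
while $f(x+s)-f(x)=f_i(x(i)+\sigma)-f_i(x(i))$ and $f(x+t)-f(x)=f_j(x(j)+\tau)-f_j(x(j))$. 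Adding the last two equalities recovers the displayed identity. Note that this part uses only separability, not convexity.

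I do not expect a genuine obstacle here. Part (ii) is a bookkeeping identity, and part (i) is a one-line consequence of univariate convexity once the reduction to coordinate $i$ is made. The only points requiring care are tracking which increment of which $f_i$ appears after the reduction (so that the sign $\sigma$ correctly matches the direction from $x(i)$ toward $y(i)$) and observing that the endpoints $a<b$ are genuinely distinct integers, so that the univariate convexity inequality applies.
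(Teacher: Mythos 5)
Your proof is correct. The paper states Proposition \ref{pr:sep} without any proof (treating it as a standard fact about separable convex functions), and your argument --- reducing to the single affected coordinates via separability and then invoking the monotonicity of forward differences of a univariate convex function, with the sign bookkeeping for $\sigma$ handled correctly in both cases --- is exactly the standard justification one would supply.
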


\begin{lem}
\label{le:x+s'}
 Let $x \in J$ be a vector that is not an optimal solution of {\rm (JSC)},
and $s^{*} \in \univec$ a vector satisfying \eqref{eq:descent_s-2}.
 Then, $f(x+ s^*) < f(x)$ holds.
\end{lem}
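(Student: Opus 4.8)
The plan is to argue by a short case analysis on the vector $t^*$ that necessarily accompanies $s^*$ in \eqref{eq:descent_s-2}. First, since $x$ is not an optimal solution, Theorem \ref{th:min_SC} provides $s, t \in \univec \cup \{{\bf 0}\}$ with $x + s + t \in J$ and $f(x+s+t) < f(x)$, so the argmin set in \eqref{eq:descent_s-2} is nonempty; hence $s^*$ is well defined and comes together with some $t^* \in \univec \cup \{{\bf 0}\}$ satisfying $x + s^* + t^* \in J$ and $f(x + s^* + t^*) < f(x)$. Observe also that $t^* \ne -s^*$, since otherwise $x + s^* + t^* = x$, contradicting $f(x+s^*+t^*) < f(x)$.

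I would first dispose of the two easy cases. If $t^* = {\bf 0}$, then $f(x+s^*) = f(x+s^*+t^*) < f(x)$ and we are done. If $t^* = s^*$, so that $s^* = \pm\chi_i$ for some $i$, $x + 2s^* \in J$, and $f(x+2s^*) < f(x)$, then applying Proposition \ref{pr:sep}(i) with $s^* \in \inc(x, x+2s^*)$ gives $f(x) + f(x+2s^*) \ge 2 f(x + s^*)$, whence $f(x+s^*) \le \tfrac{1}{2}\{f(x) + f(x+2s^*)\} < f(x)$.

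The remaining case is $t^* \in \univec$ with $\supp{s^*} \ne \supp{t^*}$, which I would handle by contradiction: assume $f(x+s^*) \ge f(x)$. By Proposition \ref{pr:sep}(ii), $f(x+s^*+t^*) - f(x) = \{f(x+s^*) - f(x)\} + \{f(x+t^*) - f(x)\}$, and since the left side is negative while the first bracket on the right is nonnegative, we get $f(x+t^*) < f(x)$. The key point is that $t^*$ is now itself a feasible candidate for the minimization in \eqref{eq:descent_s-2}: choosing the auxiliary vector to be $s^*$, we have $x + t^* + s^* = x + s^* + t^* \in J$ and $f(x+t^*+s^*) < f(x)$. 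The minimality defining $s^*$ then forces $f(x+s^*) \le f(x+t^*) < f(x)$, a contradiction. Since $t^* \in \univec \cup \{{\bf 0}\}$ and $t^* \ne -s^*$, these cases are exhaustive, so $f(x+s^*) < f(x)$. The only slightly subtle step is this last observation that $t^*$ qualifies as a candidate minimizer (with $s^*$ playing the role of the auxiliary unit vector), which lets the greedy choice of $s^*$ be invoked; everything else is immediate from the convexity facts of Proposition \ref{pr:sep}.
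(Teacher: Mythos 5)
Your proposal is correct and follows essentially the same route as the paper: the same case split on $t^*$ (zero, equal to $s^*$, or a unit vector with different support), Proposition \ref{pr:sep}(i) for the $t^*=s^*$ case, and, in the remaining case, the same key observation that $t^*$ itself qualifies as a candidate in \eqref{eq:descent_s-2} (with $s^*$ as the auxiliary vector), giving $f(x+s^*)\le f(x+t^*)$. The only cosmetic difference is that you package the last case as a proof by contradiction, whereas the paper combines $f(x+s^*)\le f(x+t^*)$ with Proposition \ref{pr:sep}(ii) directly to conclude $2\{f(x+s^*)-f(x)\}\le f(x+s^*+t^*)-f(x)<0$.
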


\begin{proof}
By the definition of $s^*$, there exists $t^* \in U \cup \{ \textbf{0} \}$
such that 
\begin{equation}
\label{eq:proof_opt2}
x + s^* + t^* \in J, \quad f(x + s^* + t^*) < f(x).
\end{equation}
 We have $s^*+t^* \ne \textbf{0}$ since $f(x + s^* + t^*) < f(x)$.

 If $t ^ * = \textbf{0}$, then $f(x+ s^*) < f(x)$ follows immediately
from \eqref{eq:proof_opt2}.
 If $t^* = s^*$, then the separable convexity of $f$ and \eqref{eq:proof_opt2}
imply 
\[
 f(x + s^*) - f(x) \le (1/2)\{ f(x + 2s^*) - f(x) \}  < 0.
\]

 We then assume $t^* \in U \setminus\{s^*\}$.
 By the choice of $s^*$ we have $f(x+ s^*) \le f(x+ t^*)$.
 This inequality and Proposition \ref{pr:sep} (ii) imply 
\begin{align*}
    2 \{ f(x + s^*) - f(x) \} &\le \{ f(x + s^*) - f(x) \} + \{ f(x + t^*) - f(x) \}\\
    &= f(x + s^* + t^*) - f(x) < 0,
\end{align*}
where the strict inequality is by \eqref{eq:proof_opt2}.
\end{proof}

\begin{lem}
\label{le:gradient3}
For $j =1,2,\ldots, l$, it holds that
\begin{equation}
\label{eq:gradient3}
    f(x_{j-1} + s_j) - f(x_{j-1}) < f(x + s^* + t^*) - f(x + s^*).
\end{equation}
\end{lem}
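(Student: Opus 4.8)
The plan is to reduce the claim to the single index $j = l$ and then settle that case by a short sign computation based on Lemma~\ref{lem:caseS-equation}. Write $a_j = f(x_{j-1} + s_j) - f(x_{j-1})$ for $j = 1, \ldots, k$, and abbreviate $R = f(x + s^* + t^*) - f(x + s^*)$. The inequality \eqref{eq:sj}, which holds for $j = 1, \ldots, k-1$ and hence in particular for $j = 1, \ldots, l-1$ (recall $l \le k$), says exactly that $a_1 \le a_2 \le \cdots \le a_l$. Consequently, once $a_l < R$ is shown, it follows that $a_j \le a_l < R$ for every $j = 1, \ldots, l$, which is the assertion of the lemma. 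So the whole proof reduces to proving $a_l < R$.

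For $j = l$ I would distinguish the two cases (CS) and (CT) introduced just before Lemma~\ref{lem:caseS-equation}. In case (CT) we have $t_l = +\chi_h \ne \textbf{0}$, so by the rule \eqref{eq:x_j:3} the vector $x_{l-1} + s_l$ does not lie in $J$ and $t_l$ is chosen by the second branch; in particular $f(x_{l-1} + s_l + t_l) < f(x_{l-1})$, which rearranges to $a_l < -\{f(x_{l-1} + s_l + t_l) - f(x_{l-1} + s_l)\}$. By the (CT) case of Lemma~\ref{lem:caseS-equation} the right-hand side equals $R$, and we are done. In case (CS) we have $s_l = +\chi_h$; here I would apply Lemma~\ref{le:x+s'} to the minimization of $f$ over the restricted jump system $J_l$ (the set $J_j$ with $j = l$ introduced in the proof of Lemma~\ref{lem:x-sequence}), taking $x_{l-1}$ as the current vector and $s_l$ as the greedy direction, to obtain $f(x_{l-1} + s_l) < f(x_{l-1})$, i.e.\ $a_l < 0$. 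Since the (CS) case of Lemma~\ref{lem:caseS-equation} gives $R = -\{f(x_{l-1} + s_l) - f(x_{l-1})\} = -a_l > 0$, it follows that $a_l < 0 < -a_l = R$, as required.

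The step I expect to require the most care is the application of Lemma~\ref{le:x+s'} to $J_l$ in case (CS): one must verify that $x_{l-1}$ is not a minimizer of $f$ on $J_l$ and that $s_l$ satisfies the greedy condition \eqref{eq:descent_s-2} for the problem on $J_l$. The first holds because $x^*$ is the unique minimizer of $f$ on $J_l$, as recorded in the proof of Lemma~\ref{lem:x-sequence}, while $x_{l-1} \ne x^*$ since $x_{l-1}(h) < x(h) + s^*(h) \le x^*(h)$ by the minimality of $l$. For the second, observe that a unit vector $s$ admits some $t \in \univec \cup \{\textbf{0}\}$ with $x_{l-1} + s + t \in J_l$ if and only if $s \in \inc(x_{l-1}, x^*)$ and $x_{l-1} + s + t \in J$ for some $t \in \inc(x_{l-1} + s, x^*) \cup \{\textbf{0}\}$; hence the sets of directions over which the minima in \eqref{eq:descent_s-2} (for $J_l$) and \eqref{eq:x_j:2} are taken coincide, and $s_l$ is an admissible greedy direction for $J_l$. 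Beyond this bookkeeping the argument is routine sign-chasing with Lemma~\ref{lem:caseS-equation}, so I anticipate no further obstacle.
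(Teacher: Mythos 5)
Your proposal is correct and follows essentially the same route as the paper: both arguments split into the cases (CS) and (CT), use Lemma~\ref{lem:caseS-equation} to identify $f(x+s^*+t^*)-f(x+s^*)$ with the relevant increment at step $l$, invoke Lemma~\ref{le:x+s'} on the restricted jump system for the sign in case (CS), and use the monotonicity \eqref{eq:sj} (the paper applies Lemma~\ref{le:x+s'} to every $j$ in case (CS) rather than reducing to $j=l$ first, but this is an immaterial variation). Your verification that $s_l$ satisfies the greedy condition \eqref{eq:descent_s-2} for the restricted problem on $J_l$ is exactly the bookkeeping the paper leaves implicit.
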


\begin{proof}
 We first consider the case where (CS) holds.
 Lemma \ref{lem:caseS-equation} implies that
\begin{align*}
&    f(x + s^* + t^*) - f(x + s^*) \\
& = - \{f(x_{l-1} + s_l) - f(x_{l-1})\}
 > 0 > f(x_{j-1} + s_j) - f(x_{j-1}), 
\end{align*}
where the two inequalities are by Lemma  \ref{le:x+s'}.
  Hence, \eqref{eq:gradient3} follows.

 We then consider the case where (CT) holds.
 Lemma \ref{lem:caseS-equation} implies that
\begin{align}
 f(x + s^* + t^*) - f(x + s^*) 
&    = - f(x_{l-1} + s_l + t_l) + f(x_{l-1} + s_l) \notag\\
& > -f(x_{l-1})+ f(x_{l-1} + s_l),
 \label{eq:gradient3:2}
\end{align}
where we use the inequality $f(x_{l-1} + s_l + t_l) - f(x_{l-1}) < 0$,
which follows from the choice of $s_l$ and $t_l$.
 The desired inequality \eqref{eq:gradient3} follows from
\eqref{eq:gradient3:2} and the inequality \eqref{eq:sj}
in Lemma~\ref{lem:x-sequence}.
\end{proof}

\subsection{Proof of Lemma \ref{le:y1}}
\label{sec:proof2}

In the proof of Lemma \ref{le:y1} the following lemmas are used.

\begin{lem}
\label{le:y1-2}
The vector $x + s^* + \sum_{p \in P}s_p + \sum_{q \in Q}t_q$
is not contained in $J$
for every $P \subseteq S$ and $Q \subseteq T_-$.
\end{lem}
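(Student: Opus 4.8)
The plan is to argue by contradiction: assume $z := x + s^* + \sum_{p \in P} s_p + \sum_{q \in Q} t_q \in J$ for some $P \subseteq S$ and $Q \subseteq T_-$, and use the exchange axiom (J-EXC) to produce a vector $x + s^* + t \in J$ with $f(x + s^* + t) < f(x + s^* + t^*)$, contradicting the minimality of $t^*$ in \eqref{eq:descent_t-2}. Since we are in the case $t^* \neq \textbf{0}$, we have $x + s^* \notin J$; hence $z \neq x + s^*$ and $v := z - (x + s^*) = \sum_{p \in P}s_p + \sum_{q \in Q}t_q \neq \textbf{0}$.

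First I would record a preliminary observation on coordinate $h$. By the conformality relation \eqref{eqn:conformal} and $x^*(h) > x_0(h)$, we have $s_j(h), t_j(h) \in \{0, 1\}$ for all $j$, so $x_0(h) \le x_1(h) \le \cdots \le x_k(h) = x^*(h)$; moreover $x_j(h) = x_0(h)$ for every $j < l$ by the choice of $l$. It follows that $s_p(h) = 0$ for every $p \in S$, and that $t_q(h) \neq 0$ is possible only for $q = l$ under (CS). Using this I would verify $s^* \in \inc(x, z)$, i.e. that on $\supp{s^*}$ the vector $v$ does not overturn the sign of $s^*$: with $s^* = \pm\chi_i$, this is routine when $i \neq h$ (using $s^* \in \inc(x, x^*)$ and $x_0 = x + s^* + t^*$, together with \eqref{eqn:conformal}), while in the remaining case $s^* = t^* = -\chi_h$ one checks $x_{l-1}(h) + s_l(h) = x^*(h)$, whence no $t_q$ equals $+\chi_h$ and $v(h) = 0$, so again $s^* \in \inc(x, z)$.

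Now applying (J-EXC) to $x, z \in J$ with $s^* \in \inc(x, z)$ and $x + s^* \notin J$ yields $t \in \inc(x + s^*, z)$ with $x + s^* + t \in J$. Since $\inc(x + s^*, z)$ consists of the unit vectors conformal to $v$, and all summands of $v$ agree in sign on each coordinate by \eqref{eqn:conformal}, the vector $t$ coincides with some $s_{p_0}$ $(p_0 \in P)$ or some $t_{q_0}$ $(q_0 \in Q)$. It remains to bound $f(x + s^* + t)$. If $t = s_{p_0}$, comparing $x + s^*$ with $x_{p_0-1} = x + s^* + t^* + \sum_{r \le p_0-1}(s_r + t_r)$ on $\supp{t}$ — the summands move $\supp{t}$ in the same direction, and $t^*$ does not touch $\supp{t}$ (here one uses $s_p(h) = 0$ for $p \in S$) — the separable convexity of $f$ (Proposition \ref{pr:sep}) gives $f(x + s^* + t) - f(x + s^*) \le f(x_{p_0-1} + s_{p_0}) - f(x_{p_0-1})$, and Lemma \ref{le:gradient3} bounds the right-hand side strictly above by $f(x + s^* + t^*) - f(x + s^*)$. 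If $t = t_{q_0}$, the analogous comparison against $x_{q_0-1} + s_{q_0}$ gives $f(x + s^* + t) - f(x + s^*) \le f(x_{q_0-1} + s_{q_0} + t_{q_0}) - f(x_{q_0-1} + s_{q_0}) < f(x + s^* + t^*) - f(x + s^*)$, the last inequality by the definition of $T_-$ (recall $q_0 \in T_-$); the only case in which $\supp{t_{q_0}} = \{h\}$ is $q_0 = l$ under (CS), where $l \in T_-$ by Lemma \ref{lem:l_in_T-} and the two marginals in fact coincide. In every case $x + s^* + t \in J$ and $f(x + s^* + t) < f(x + s^* + t^*)$, the desired contradiction.

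The main obstacle is the coordinate bookkeeping behind $s^* \in \inc(x, z)$ and the two marginal comparisons: both require tracking how coordinate $h$ evolves along $x_0, \ldots, x_k$ and splitting into the cases $s^* = +\chi_i$, $s^* = -\chi_h$, and $s^* = -\chi_{h'}$ with $h' \neq h$, as well as (CS) versus (CT). Once these inequalities are in place, the rest is a direct application of (J-EXC), Proposition \ref{pr:sep}, Lemma \ref{le:gradient3}, and the definition of $T_-$.
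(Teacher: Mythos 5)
Your proposal is correct and reaches the same contradiction as the paper (a point $x+s^*+w\in J$ with $w$ drawn from $\{s_p\mid p\in P\}\cup\{t_q\mid q\in Q\}$ and $f(x+s^*+w)<f(x+s^*+t^*)$, violating the minimality of $t^*$ in \eqref{eq:descent_t-2}), and the second half of your argument --- Proposition \ref{pr:sep}~(i) combined with Lemma \ref{le:gradient3} for $w=s_{p_0}$ and with the definition of $T_-$ for $w=t_{q_0}$ --- is exactly the paper's. The one structural difference is where you invoke (J-EXC): you exchange from $x$ toward $z$ along $s^*\in\inc(x,z)$, whereas the paper exchanges from $x_0=x+s^*+t^*\in J$ toward $y=z+t^*$ along $-t^*\in\inc(x_0,y)$. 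The paper's choice is the more economical one: since $t^*\notin\{s_j,t_j\}$ by \eqref{eqn:conformal}, the membership $-t^*\in\inc(x_0,y)$ is immediate, $x_0-t^*=x+s^*\notin J$ triggers the exchange, and no case analysis on $s^*$ is needed. Your choice forces you to verify $s^*\in\inc(x,z)$, which is where all of your coordinate bookkeeping on $h$ lives; that bookkeeping is sound (in particular $s_p(h)=0$ for $p\in S$ and $t_q(h)=0$ for $q\in T$ except $t_l$ under (CS), so $v(h)=0$ when $s^*=-\chi_h$), though your intermediate equation $x_{l-1}(h)+s_l(h)=x^*(h)$ holds only under (CS) --- under (CT) it is $x^*(h)-1$ --- while the conclusion you draw from it is correct in both cases. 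Net effect: both routes are valid; the paper's trades your case analysis for the single observation $t^*\notin S\cup T_-$.
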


\begin{proof}
 Assume, to the contrary, that 
\[
 y \equiv x + s^* + \sum_{p \in P}s_p + \sum_{q \in Q}t_q \in J
\]
holds for some $P \subseteq S$ and $Q \subseteq T_-$.
 By \eqref{eqn:conformal} in Lemma \ref{lem:x-sequence}, we have
$t^* \notin S \cup  T_-$, which, together with
$x_0 = x + s^* + t^* \in J$, implies
\[
    \inc(x_0, y) = \{-t^*\} \cup \{s_p \mid p \in P\} \cup \{t_q \mid q \in Q\}.
\]
 By (J-EXC) applied to $x_0$, $y$, and $- t^* \in \inc(x_0, y)$,
there exists some
\[
 u \in \inc(x_0 - t^*, y_0) \cup \{\textbf{0}\} = \{s_p \mid p \in P\} \cup \{t_q
 \mid q \in Q\} \cup \{\textbf{0}\}
\]
such that
\[
    x_0 - t^* + u = x + s^* + u \in J.
\]
Since $x + s^* \notin J$ by assumption, it holds that
$u \in \{s_p \mid p \in P\} \cup \{t_q \mid q \in Q\}$.

 Suppose that  $u = s_p$ holds for some $p \in P$.
 Then, we have $x + s^* + s_p \in J$.
 By \eqref{eqn:conformal} in Lemma \ref{lem:x-sequence}, we have
$s_p \in \inc(x+s^*, x_{p-1}+s_p)$.
 Hence, it follows from  Proposition \ref{pr:sep} (i) that 
\[
f(x + s^* + s_p) - f(x + s^*) \le      f(x_{p-1} + s_p) - f(x_{p-1}) 
< f(x + s^* + t^*) - f(x + s^*),
\]
where the last inequality is by Lemma \ref{le:gradient3}.
 Thus, we have
\[
 f(x + s^* + t^*) > f(x + s^* + s_p), 
\]
a contradiction to the choice of $t^*$ since $x + s^* + s_p \in J$.

 Suppose that $u = t_q$ holds for some $q \in Q \subseteq T_-$.
 Since $q \in T_-$, we have
\begin{equation}
\label{eq:x_q-1}
    f(x + s^* + t^*) - f(x + s^*) > f(x_{q-1} + s_q + t_q) - f(x_{q-1} + s_q).
\end{equation}
 Since $t_q \in \inc(x+s^*, x_{q-1}+s_q+t_q)$, Proposition \ref{pr:sep} (i) implies
\[
f(x + s^* + t_q) - f(x + s^*) \le  f(x_{q-1} + s_q + t_q) - f(x_{q-1} + s_q),
\]
which, together with \eqref{eq:x_q-1}, 
yields 
\[
 f(x + s^* + t^*) > f(x + s^* + t_q),
\]
a contradiction to the choice $t^*$ since $x + s^* + t_q \in J$.
 Hence, we have $y \notin J$.
 This concludes the proof of Lemma \ref{le:y1-2}.
\end{proof}

 Note that $T_+ \ne \emptyset$ follows from Lemma \ref{le:y1-2}.
 Indeed, if $T_+ = \emptyset$, then we have 
\[
    x_{l} = x + s^* + \sum_{p \in S}s_p + \sum_{q \in T}t_q
= x + s^* + \sum_{p \in S}s_p  + \sum_{q \in T_-}t_q \in J,
\]
a contradiction to Lemma \ref{le:y1-2}.

 We now prove Lemma \ref{le:y1}, i.e.,
 there exist 
$P_0 \subseteq S$ and $Q_0 \subseteq T_-$ such that 
\begin{align*}
& x + s^* + t^* + \sum_{p \in P_0}s_p + \sum_{q \in Q_0}t_q \in J,\\
& |P_0|+ |Q_0| \ge |S| + |T_-| - |T_+| + 1. 
\end{align*}
 Let
\begin{align*}
\Phi &= \{s_p \mid 1 \le p \le l\}\cup \{t_q \mid 1 \le q \le l, t_q \neq
 \textbf{0}\}
\\
& = \{-t^*\} \cup  \{s_p \mid p \in S\} 
 \cup \{t_q \mid q \in T_+ \cup T_-\}. 
\end{align*}
 It suffices to show that there exists some
\begin{equation}
\label{eqn:Phi'-cond}
  \Phi' \subseteq  \{s_p \mid p \in S\}  \cup \{t_q \mid q \in T_-\} 
\end{equation}
such that 
\begin{align}
 \label{eq:y0-2}
& x + s^* + t^* + \sum_{u \in \Phi'}u  \in J,\\
& |\Phi'| \ge |S| + |T_-| - |T_+| + 1. 
\label{eqn:yST-seq:3-2}
\end{align}

 Let $\Phi' \subseteq \Phi$ be a set maximizing the value 
\begin{equation}
\label{eqn:P0Q0-minimize}
 |\Phi'|  - (2+ \varepsilon) \cdot |\{q  \in T_+\mid t_q \in \Phi'\}|
\end{equation}
under the condition \eqref{eq:y0-2},
where $\varepsilon$ is a sufficiently small positive number.
 Since
\begin{align*}
&    x + s^* + t^* +  \sum_{u \in \Phi} u = x_l \in J,
\end{align*}
we obtain a lower bound of the value \eqref{eqn:P0Q0-minimize}:
\begin{align}
 |\Phi'|  - (2+ \varepsilon)\cdot |\{q  \in T_+\mid t_q \in \Phi'\}|
& \ge |\Phi| - (2+ \varepsilon)\cdot |\{q  \in T_+\mid t_q \in \Phi\}|
 \notag\\
& = 1 + |S| + |T_+ \cup T_-| - (2+ \varepsilon)|T_+| \notag\\
& = 1 + |S| + |T_-| - (1+ \varepsilon)|T_+|.
 \label{eqn:P0Q0-lowerbound}
\end{align}
  As shown below, we have \eqref{eqn:Phi'-cond},
and therefore $\{q  \in T_+\mid t_q \in \Phi'\} = \emptyset$ holds.
 Hence, the inequality \eqref{eqn:P0Q0-lowerbound}
implies 
\begin{align*}
 |\Phi'| & \ge 1 + |S| + |T_-| - (1+ \varepsilon)|T_+|,
\end{align*}
from which \eqref{eqn:yST-seq:3-2} follows since
$\varepsilon$ is sufficiently small positive number.

 To conclude the proof, we prove \eqref{eqn:Phi'-cond}.
 It suffices to show that $-t^* \notin \Phi'$
and  there exists no $q \in T_+$ satisfying $t_q \in \Phi'$.
 In the following, we denote by $\tilde{x}$ the vector in \eqref{eq:y0-2}.

 Assume, to the contrary, that $-t^* \in \Phi'$.
 Then, we have
\[
 \tilde{x} =  x + s^* + \sum_{u \in \Phi' \setminus\{-t^*\}}u   
 \in J,
\]
and 
\[
 \Phi' \setminus\{-t^*\} \subseteq \{s_p \mid p \in S\} \cup \{t_q \mid q \in T_-\},
\]
a contradiction to Lemma \ref{le:y1-2}.
 Hence, we have $-t^* \notin \Phi'$.

 Assume, to the contrary, that there exists some 
 $q \in T_+$ satisfying $t_q \in \Phi'$.
 Then, we have $-t_q \in \inc(\tilde{x}, x_0)$,
and (J-EXC) applied to $\tilde{x}$, $x_0$, and $-t_q$
implies that there exists some 
$w \in \inc(\tilde{x}-t_q, x_0)\cup \{\textbf{0}\}$ such that
\[
 \tilde{x} - t_q + w = 
x + s^* + t^* + \sum_{u \in \Phi''}u  \in J,
\quad
\mbox{where }
\Phi'' =
\begin{cases}
\Phi' \setminus \{t_q, w\}
& (\mbox{if }w \ne \textbf{0}),\\
\Phi' \setminus \{t_q\}
& (\mbox{if }w = \textbf{0}).
\end{cases}
\]
 Therefore, we have
\begin{align*}
& |\Phi''| =|\Phi'|-2 \mbox{ or }|\Phi''| =|\Phi'|-1,\\
& |\{q \in T_+ \mid t_q \in \Phi'' \}| =
|\{q \in T_+ \mid t_q \in \Phi' \}| -1,
\end{align*}
which implies that
\[
  |\Phi''|  - (2+ \varepsilon) \cdot |\{q \in T_+ \mid t_q \in \Phi'' \}|
> |\Phi'|  - (2+ \varepsilon) \cdot |\{q \in T_+ \mid t_q \in \Phi' \}|,
\]
a contradiction to the choice of $\Phi'$.
 Hence,  there exists no $q \in T_+$ satisfying $t_q \in \Phi'$.

 This concludes the proof of Lemma \ref{le:y1}.

\subsection{Proof of Lemma \ref{lem:key-lemma-proofofTh5}}
\label{sec:proof3}

 Let $P_{j-1}$ and $Q_{j-1}$ be sets with
\begin{equation}
 P_{j-1} \subseteq S \cap \{j, \ldots, l\}, 
\quad Q_{j-1} \subseteq T_- \cap \{j, \ldots, l\},
\label{eqn:PQ_j-1_assumption-2}
\end{equation}
and suppose that 
\begin{equation}
 \label{eqn:def-y}
y \equiv x_{j-1} + \sum_{p \in P_{j-1}}s_p + \sum_{q \in Q_{j-1}}t_q \in J. 
\end{equation}
 We show that  there exist some $P_j$ and $Q_j$ such that
\begin{align}
  &x_j + \sum_{p \in P_j}s_p + \sum_{q \in Q_j}t_q \in J, \label{eq:y_j} \\
   & P_j \subseteq P_{j-1} \cap \{j + 1, \ldots, l\}, 
\qquad Q_j \subseteq Q_{j-1} \cap \{j + 1, \ldots, l\}, \label{eq:P_jQ_j}\\
&
\begin{cases}
 |P_j| + |Q_j| \ge |P_{j-1}| + |Q_{j-1}| - 2 & (\mbox{if }j \in T_-), 
\\
 |P_j| + |Q_j| \ge |P_{j-1}| + |Q_{j-1}| - 1 & 
(\mbox{if }j \in T_0), \\
 |P_j| + |Q_j| = |P_{j-1}| + |Q_{j-1}| & (\mbox{if }j \in T_+).
\end{cases}
\label{eq:num_pq_t-0+}
\end{align}

 In the following, we first consider the case
with $j \in T_- \cup T_0$,
and then the case with $j \in T_+$.

\subsubsection{Proof for Case of \texorpdfstring{$j \in T_- \cup T_0$}{j in T- cup T0}}

 Suppose that $j \in T_- \cup T_0$, i.e.,
it holds that
$t_j = \textbf{0}$ or
\begin{equation}
\label{eqn:j_in_T-}
  f(x_{j-1} + s_j + t_j) - f(x_{j-1} + s_j) <
f(x + s^* + t^*) - f(x + s^*).
\end{equation}

 The outline of the proof is as follows.
 We first show the existence of sets $P_j$ and $Q_j$
satisfying \eqref{eq:y_j}, \eqref{eq:P_jQ_j},
and \eqref{eq:num_pq_t-0+},
or the existence of sets $P'$ and $Q'$ satisfying
\begin{align}
&  y'= x_{j-1} + s_{j} + \sum_{p \in P'}s_p + \sum_{q \in Q'}t_q,
\label{eq:y_j-2}\\
& P' \subseteq P_{j-1} \cap \{j+1, \ldots, l\},\quad Q' \subseteq Q_{j-1},
\label{eq:y_j-3}\\
& |P'|+|Q'| \ge |P_{j-1}|+ |Q_{j-1}|-1.
 \label{eq:y_j-4}
\end{align}
 In the former case, we are done.
 Otherwise, we use the vector $y' \in J$ and the sets $P', Q'$
to obtain sets $P_j$ and $Q_j$
satisfying \eqref{eq:y_j}, \eqref{eq:P_jQ_j}, and \eqref{eq:num_pq_t-0+}.

[Case 1: $s_p =  s_j$ for some $p \in P_{j-1}$] \quad
 Let $P' = P_{j-1} \setminus\{j\}$ if $j \in P_{j-1}$; 
otherwise, 
let $P' = P_{j-1} \setminus\{p'\}$ with some 
$p' \in P_{j-1}$ satisfying $s_{p'} = s_j$.
 Then, we have
\[
x_{j-1} + s_{j} + \sum_{p \in P'}s_p + \sum_{q \in Q'}t_q = y
\]
with $Q' = Q_{j-1}$, and
it is easy to see that $P'$ and $Q'$ satisfy the conditions
\eqref{eq:y_j-2}, \eqref{eq:y_j-3}, and \eqref{eq:y_j-4}.

[Case 2: $s_p \ne  s_j$ for every $p \in P_{j-1}$] \quad
 This assumption and the equation 
\begin{align*}
  x_j- y 
 &= (x_{j-1} + s_j + t_j)-
 (x_{j-1} + \sum_{p \in P_{j-1}}s_p + \sum_{q \in Q_{j-1}}t_q)\notag\\
&
= s_j + t_j - \sum_{p \in P_{j-1}}s_p - \sum_{q \in Q_{j-1}}t_q.
\end{align*}
imply that  $s_j \in \inc(y, x_{j})$. 
 By (J-EXC) applied to $y$, $x_j$, and $s_j \in \inc(y, x_{j})$,
we have $y + s_j + w \in J$ for some
\begin{align*}
 w & \in  \inc(y+s_j, x_{j}) \cup \{\textbf{0}\}
\\
& \subseteq \{ t_j\} \cup 
\{-s_p \mid p \in P_{j-1}\} \cup \{-t_q \mid q \in Q_{j-1}\}\cup \{\textbf{0}\}.
\end{align*}
 Let 
\begin{equation}
y' = y + s_j + w
= x_{j-1} + s_j + w + \sum_{p \in P_{j-1}}s_p + \sum_{q \in Q_{j-1}}t_q.
\label{eqn:def-y'}
\end{equation}

[Case 2a: $w = t_j$]
\quad
 In this case, we have
\[
y' = x_j + \sum_{p \in P_{j-1}}s_p + \sum_{q \in Q_{j-1}}t_q,
\]
and therefore  $P_j= P_{j-1}$ and $Q_j= Q_{j-1}$ satisfy
the conditions \eqref{eq:y_j}, \eqref{eq:P_jQ_j}, and \eqref{eq:num_pq_t-0+}.

[Case 2b: $w = \textbf{0}$]
\quad
 It is easy to see from \eqref{eqn:def-y'} that 
$P' = P_{j-1}$ and $Q'=Q_{j-1}$ satisfy 
the conditions \eqref{eq:y_j-2}, \eqref{eq:y_j-3}, and \eqref{eq:y_j-4}.

[Case 2c: $w \in \{-s_p \mid p \in P_{j-1}\} \cup \{-t_q \mid q \in Q_{j-1}\}$]
\quad
 From \eqref{eqn:def-y'} we obtain
\begin{align*}
  y' 
& =
\begin{cases}
\displaystyle
 x_{j-1} + s_j + \sum_{p \in P_{j-1} \setminus \{p'\}}s_p + \sum_{q \in Q_{j-1}}t_q
& (\mbox{if }w = -s_{p'} \mbox{ with }p' \in P_{j-1}),
\\
\displaystyle
 x_{j-1} + s_j + \sum_{p \in P_{j-1}}s_p + \sum_{q \in Q_{j-1} \setminus \{q'\}}t_q
& (\mbox{if }w = -t_{q'} \mbox{ with }q' \in Q_{j-1}).
\end{cases}
\end{align*}
 Hence, the conditions \eqref{eq:y_j-2}, \eqref{eq:y_j-3}, and \eqref{eq:y_j-4}
are satisfied by setting $P'$ and $Q'$ as follows:
\begin{align*}
\begin{cases}
\displaystyle
P' =  P_{j-1} \setminus \{p'\},\ Q' = Q_{j-1}
& (\mbox{if }w = s_{p'} \mbox{ with }p' \in P_{j-1}),
\\
\displaystyle
P' =  P_{j-1},\ Q' = Q_{j-1} \setminus \{q'\}
& (\mbox{if }w = t_{q'} \mbox{ with }q' \in Q_{j-1}).
\end{cases}
\end{align*}

 In Cases 1, 2b, and 2c,
we obtain the sets $P'$ and $Q'$ satisfying 
the conditions \eqref{eq:y_j-2}, \eqref{eq:y_j-3}, and \eqref{eq:y_j-4}.
 If $j \in T_0$ (i.e., $t_j = \textbf{0}$),
then $P_j = P'$ and $Q_j = Q'$
satisfy the conditions \eqref{eq:y_j}, \eqref{eq:P_jQ_j},
and \eqref{eq:num_pq_t-0+}.
 Hence, in the following we assume $j \in T_-$ and 
show the existence of sets $P_j$ and $Q_j$ satisfying 
\eqref{eq:y_j}, \eqref{eq:P_jQ_j},
and \eqref{eq:num_pq_t-0+}.

 We first consider the case where
$t_q = t_j$ holds for some $q \in Q'$.
 Let $Q_{j} = Q' \setminus\{j\}$ if $j \in Q'$;
otherwise,   Let $Q_{j} = Q' \setminus\{q'\}$ 
with some $q' \in Q'$ satisfying $t_{q'} = t_j$.
 Then, we have
\[
x_{j-1} + s_{j} + t_j + \sum_{p \in P_j}s_p + \sum_{q \in Q_j}t_q = y'
\]
with $P_j = P'$,
and therefore it holds that
 \eqref{eq:y_j} and \eqref{eq:P_jQ_j}.
 We also obtain \eqref{eq:num_pq_t-0+} as follows:
\begin{align*}
|P_j| + |Q_j| & \ge |P'| + |Q'| - 1 
\ge |P_{j-1}|+ |Q_{j-1}|-2,
\end{align*}
where the second inequality is by \eqref{eq:y_j-4}.

 We then assume that 
$t_q \ne t_j$ holds for every $q \in Q'$.
 This assumption and the equation
\begin{align*}
y' - x_j & = - t_j + \sum_{p \in P'}s_p + \sum_{q \in Q'}t_q
\end{align*}
imply that  $t_j \in \inc(y', x_{j})$. 
 By (J-EXC) applied to $y'$, $x_j$, and $t_j \in \inc(y', x_{j})$ 
we have $y' + t_j + w \in J$ for some
\begin{align*}
 w & \in  \inc(y'+t_j, x_{j}) \cup \{\textbf{0}\}
 \subseteq 
\{-s_p \mid p \in P'\} \cup \{-t_q \mid q \in Q'\}\cup \{\textbf{0}\}.
\end{align*}
 Since
\begin{align*}
 y' + t_j + w
& = x_{j} + w + \sum_{p \in P'}s_p + \sum_{q \in Q'}t_q\\
& =
x_j +
\begin{cases}
\displaystyle
   \sum_{p \in P'}s_p + \sum_{q \in Q'}t_q
& (\mbox{if }w = \textbf{0}),
\\
\displaystyle
   \sum_{p \in P' \setminus \{p'\}}s_p + \sum_{q \in Q'}t_q
& (\mbox{if }w = -s_{p'} \mbox{ with }p' \in P'),
\\
\displaystyle
  \sum_{p \in P'}s_p + \sum_{q \in Q' \setminus \{q'\}}t_q
& (\mbox{if }w = -t_{q'} \mbox{ with }q' \in Q').
\end{cases}
\end{align*}
 Hence, the sets $P_j$ and $Q_j$ given as
\begin{align*}
\begin{cases}
\displaystyle
P_j =  P',\ Q_j = Q'
& (\mbox{if }w = \textbf{0}),
\\
P_j =  P' \setminus \{p'\},\ Q_j = Q'
& (\mbox{if }w = s_{p'} \mbox{ with }p' \in P'),
\\
\displaystyle
P_j =  P',\ Q_j = Q' \setminus \{q'\}
& (\mbox{if }w = t_{q'} \mbox{ with }q' \in Q').
\end{cases}
\end{align*}
satisfy the conditions  \eqref{eq:y_j} and \eqref{eq:P_jQ_j}.
 We also obtain \eqref{eq:num_pq_t-0+} as follows:
\begin{align*}
|P_j| + |Q_j| & \ge |P'| + |Q'| - 1 
\ge |P_{j-1}|+ |Q_{j-1}|-2,
\end{align*}
where the second inequality is by \eqref{eq:y_j-4}.
 This concludes the proof of Lemma \ref{lem:key-lemma-proofofTh5}
in the case of $j \in T_- \cup T_0$.

\subsubsection{Proof for Case of \texorpdfstring{$j \in T_+$}{j in T+}}

 Suppose that $j \in T_+$, i.e.,
$t_j \ne \textbf{0}$ and 
\begin{equation}
\label{eqn:j_in_T+}
  f(x_{j-1} + s_j + t_j) - f(x_{j-1} + s_j) \ge
f(x + s^* + t^*) - f(x + s^*).
\end{equation}
 This inequality and the definition of $T_+$ imply that
$q \in T_+$ holds for every $q \in \{1,2, \ldots, l\}$ with $t_q = t_j$.
 Since $Q_{j-1} \subseteq T_-$ and $T_+ \cap T_- = \emptyset$,
we have
\begin{equation}
 \label{eqn:Qj-1:tq}
q \notin Q_{j-1} \qquad \mbox{if }t_q = t_j;
\end{equation}
we have $j \notin Q_{j-1}$, in particular.
We also have
\begin{equation}
 \label{eqn:Qj-1:tq-2}
 Q_{j-1} \subseteq \{t_q \mid q \in T_-,\ j+1 \le q \le l\}.
\end{equation}

  To prove Lemma \ref{lem:key-lemma-proofofTh5} in this case,
it suffices to show that 
\[
j \notin P_{j-1}, \quad 
y + s_j + t_j \in J;
\]
recall the definition of $y$  in \eqref{eqn:def-y}.
 Since  $j \notin Q_{j-1}$, the conditions above imply
\[
x_j + \sum_{p \in P_j}s_p + \sum_{q \in Q_j}t_q \in J,
\quad
 |P_j| + |Q_j| = |P_{j-1}| + |Q_{j-1}| 
\]
with $P_j = P_{j-1}$ and $Q_j = Q_{j-1}$.

 We prove
$j \notin P_{j-1}$ and $y + s_j + t_j \in J$ by using the
following lemmas.

\begin{lem}
\label{le:t_+}
 Let $j \in T_+$. 
Then, we have 
\begin{align}
& x_j - t_j \notin J, \notag\\
& x_j - t_j + s_{p} \notin J \quad 
(j+1 \le p \le l),
\label{eq:le:t_+}
\\
& x_j - t_j + t_{q} \notin J \quad
(q\in T_-,\ j+1 \le q \le l).
\label{eq:le:t_+-2}
\end{align}
\end{lem}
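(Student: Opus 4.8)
The first assertion is immediate. Since $x_j - t_j = x_{j-1}+s_j$ and $j\in T_+$ forces $t_j\ne\mathbf{0}$, the selection rule \eqref{eq:x_j:3} for $t_j$ can only have been applied in its second branch, and that branch is invoked precisely when $x_{j-1}+s_j\notin J$. Thus $x_j-t_j\notin J$, and moreover $t_j$ minimizes $f(x_{j-1}+s_j+t)$ over all $t\in\inc(x_{j-1}+s_j,x^*)$ with $x_{j-1}+s_j+t\in J$ and $f(x_{j-1}+s_j+t)<f(x_{j-1})$; in particular $f(x_{j-1}+s_j+t_j)<f(x_{j-1})$.

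For \eqref{eq:le:t_+} I would argue by contradiction: assume $x_{j-1}+s_j+s_p\in J$ for some $p$ with $j+1\le p\le l$, and show that $s_p$ is then an admissible --- and strictly better --- choice in the $\arg\min$ defining $t_j$. Two membership facts are needed. First, $s_p\in\inc(x_{j-1}+s_j,x^*)$: writing $s_p\in\{+\chi_i,-\chi_i\}$, the conformality relation \eqref{eqn:conformal} of Lemma \ref{lem:x-sequence} together with the identity $x^*=x_0+\sum_{1\le q\le k}(s_q+t_q)$ shows that the coordinate $(x_{j-1}+s_j)(i)$ still differs from $x^*(i)$ in the direction of $s_p$, precisely because the copy of $s_p$ occurring at position $p>j$ has not yet been used. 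Second, monotonicity of each coordinate of the trajectory $x_0,x_1,\dots,x^*$ gives $s_p\in\inc(x_{j-1}+s_j,x_{p-1}+s_p)$. Then Proposition \ref{pr:sep}(i), applied with $x=x_{j-1}+s_j$, $y=x_{p-1}+s_p$, and $s=s_p$, yields
\[
 f(x_{j-1}+s_j+s_p)-f(x_{j-1}+s_j)\ \le\ f(x_{p-1}+s_p)-f(x_{p-1}).
\]

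Next I would chain this with Lemma \ref{le:gradient3} (applicable since $p\le l$) and with the defining inequality \eqref{eqn:j_in_T+} of $j\in T_+$:
\[
 f(x_{p-1}+s_p)-f(x_{p-1})\ <\ f(x+s^*+t^*)-f(x+s^*)\ \le\ f(x_{j-1}+s_j+t_j)-f(x_{j-1}+s_j),
\]
so that $f(x_{j-1}+s_j+s_p)<f(x_{j-1}+s_j+t_j)<f(x_{j-1})$. Hence $s_p$ satisfies all three constraints of the $\arg\min$ in \eqref{eq:x_j:3} but attains a strictly smaller value than $t_j$, a contradiction. The proof of \eqref{eq:le:t_+-2} is the same with $t_q$ ($q\in T_-$, $j+1\le q\le l$) in place of $s_p$: one uses Proposition \ref{pr:sep}(i) with $y=x_{q-1}+s_q+t_q$ and $s=t_q$, and replaces the appeal to Lemma \ref{le:gradient3} by the definition of $T_-$, which gives $f(x_{q-1}+s_q+t_q)-f(x_{q-1}+s_q)<f(x+s^*+t^*)-f(x+s^*)$; the remaining bound on $f(x+s^*+t^*)-f(x+s^*)$ by $f(x_{j-1}+s_j+t_j)-f(x_{j-1}+s_j)$ is again \eqref{eqn:j_in_T+}.

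The only genuinely delicate point is the coordinatewise bookkeeping that places $s_p$ (resp. $t_q$) in $\inc(x_{j-1}+s_j,x^*)$: one must count, for the relevant coordinate $i$, how many copies of the corresponding unit vector occur among $s_1,t_1,\dots,s_j$ against the total $|x^*(i)-x_0(i)|$, the restriction $p,q\ge j+1$ being exactly what leaves at least one copy unspent. Everything else is a routine chain of inequalities assembled from Proposition \ref{pr:sep}(i), Lemma \ref{le:gradient3}, the definition of $T_-$, the inequality \eqref{eqn:j_in_T+}, and the optimality of $t_j$ in \eqref{eq:x_j:3}.
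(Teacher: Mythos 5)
Your proposal is correct and follows essentially the same route as the paper: the first assertion from the second branch of \eqref{eq:x_j:3}, and \eqref{eq:le:t_+}, \eqref{eq:le:t_+-2} by showing $s_p\in\inc(x_{j-1}+s_j,x_{p-1}+s_p)$ (resp.\ $t_q\in\inc(x_{j-1}+s_j,x_{q-1}+s_q+t_q)$) and chaining Proposition \ref{pr:sep}(i), Lemma \ref{le:gradient3} (resp.\ the definition of $T_-$), and \eqref{eqn:j_in_T+} against the minimality of $t_j$. Your extra care in verifying $s_p,t_q\in\inc(x_{j-1}+s_j,x^*)$ is a point the paper leaves implicit, but the argument is the same.
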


\begin{proof}
 Since $t_j \ne \textbf{0}$, the definition of $t_j$
implies $x_{j-1}+s_j \notin J$.
 Since $x_j - t_j = x_{j-1}+s_j$, we have  $x_j - t_j \notin J$.

  We then prove \eqref{eq:le:t_+}.
 Since $x_{p-1} = x_{j-1} + \sum_{k = j}^{p-1} (s_k + t_k)$, we have
\[
    (x_{p-1} + s_{p}) - (x_{j-1} + s_j) 
= \sum_{k = j+1}^p s_k + \sum_{k=j}^{p-1} t_k,
\] 
which implies that $s_p \in \inc(x_{j-1} + s_j, x_{p-1} + s_{p})$.
 Hence, it follows from Proposition~\ref{pr:sep} (i) that
\[
     f(x_{j-1} + s_j + s_p) - f(x_{j-1} + s_j) 
\le f(x_{p-1} + s_p) - f(x_{p-1}) < f(x + s^* + t^*) - f(x + s^*),
\]
where the strict inequality is by Lemma \ref{le:gradient3}.
 From this inequality and \eqref{eqn:j_in_T+}
we obtain the inequality $f(x_{j-1} + s_j + s_p) < f(x_{j-1} + s_j + t_j)$.
 By the choice of $t_j$, we have $x_{j-1} + s_j + s_p \notin J$.

 The proof of \eqref{eq:le:t_+-2} given below
is similar to that for \eqref{eq:le:t_+}.
 We have
\[
    (x_{q-1} + s_{q} + t_{q}) - (x_{j-1} + s_j)
 = \sum_{k = j+1}^{q}s_k + \sum_{k=j}^{q}t_k,
\] 
which implies that $t_q \in \inc(x_{j-1} + s_j, x_{q-1} + s_{q} + t_q)$.
 Hence, it follows from  Proposition~\ref{pr:sep} (i) that
\begin{align*}
    f(x_{j-1} + s_j + t_q) - f(x_{j-1} + s_j) 
&  \le f(x_{q-1} + s_{q} + t_q) - f(x_{q-1} + s_{q})
\\
& < f(x + s^* + t^*) - f(x + s^*)\\
& \le f(x_{j-1} + s_j + t_j) - f(x_{j-1} + s_j),
\end{align*}
where the second and the last inequalities are by $q \in T_-$
and by \eqref{eqn:j_in_T+}, respectively.
 Thus, we obtain $f(x_{j-1} + s_j + t_q) < f(x_{j-1} + s_j + t_j)$,
which implies $x_{j-1} + s_j + t_q \notin J$.
\end{proof}

\begin{lem}
\label{lem:tj-in-T+}
 Let $j \in T_+$.
 Then, 
$s_p \ne s_j$ holds for every $p \in P_{j-1}$; 
we have
$j \not\in P_{j-1}$, in particular.
\end{lem}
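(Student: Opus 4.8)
The plan is to argue by contradiction: suppose $s_p = s_j$ for some $p \in P_{j-1}$. First I would fix a convenient ``duplicate'' index by setting $\hat p := j$ if $j \in P_{j-1}$ and $\hat p := p$ otherwise; in both cases $\hat p \in P_{j-1}$, $s_{\hat p} = s_j$, and since $P_{j-1} \subseteq S \cap \{j,\ldots,l\}$ we get $P_{j-1}\setminus\{\hat p\} \subseteq \{j+1,\ldots,l\}$. I would also record that $j \in T_+$ forces $Q_{j-1} \subseteq T_- \cap \{j+1,\ldots,l\}$, because $Q_{j-1}\subseteq T_-$, $j \notin T_-$ (as $T_+\cap T_- = \emptyset$), and $Q_{j-1}\subseteq\{j,\ldots,l\}$. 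Splitting off the summand $s_{\hat p}=s_j$ from the definition of $y$ then gives
\[
 y = (x_{j-1} + s_j) + \sum_{p' \in P_{j-1} \setminus \{\hat p\}} s_{p'} + \sum_{q \in Q_{j-1}} t_q .
\]
If both sums on the right are empty, then $y = x_{j-1} + s_j \notin J$ by the first assertion of Lemma~\ref{le:t_+} (recall $x_j - t_j = x_{j-1}+s_j$), contradicting $y \in J$; so at least one unit vector occurs.

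The key move is to apply (J-EXC) not to the pair $(x_{j-1}+s_j,\,y)$ — whose left member is not in $J$ — but to the pair $(x_{j-1},\,y)$, both of which lie in $J$ (recall $x_{j-1}\in J$ by Lemma~\ref{lem:x-sequence}). By the conformality relation \eqref{eqn:conformal}, the vectors $s_1,\ldots,s_l,t_1,\ldots,t_l$ never contain $+\chi_i$ and $-\chi_i$ simultaneously, so no coordinatewise cancellation occurs in $y - x_{j-1} = \sum_{p'\in P_{j-1}} s_{p'} + \sum_{q\in Q_{j-1}} t_q$; since $s_{\hat p}=s_j$ is one of the summands, this yields $s_j \in \inc(x_{j-1},y)$. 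As $x_{j-1}+s_j\notin J$, (J-EXC) produces a unit vector $u \in \inc(x_{j-1}+s_j,\,y)$ with $x_{j-1}+s_j+u \in J$.

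The final step is to identify $u$. Again by the absence of cancellations, $\inc(x_{j-1}+s_j,\,y)$ is contained in $\{\,s_{p'}\mid p'\in P_{j-1}\setminus\{\hat p\}\,\}\cup\{\,t_q\mid q\in Q_{j-1}\,\}$, so $u = s_{p'}$ for some $p' \in P_{j-1}\setminus\{\hat p\} \subseteq \{j+1,\ldots,l\}$, or $u = t_q$ for some $q \in Q_{j-1}\subseteq T_-\cap\{j+1,\ldots,l\}$. In the first case \eqref{eq:le:t_+} gives $x_{j-1}+s_j+u \notin J$, and in the second case \eqref{eq:le:t_+-2} gives $x_{j-1}+s_j+u \notin J$; either way this contradicts $x_{j-1}+s_j+u \in J$. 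Hence $s_p \neq s_j$ for all $p \in P_{j-1}$, and taking $p=j$ shows $j \notin P_{j-1}$. I expect the only real subtlety to be the index bookkeeping that places $P_{j-1}\setminus\{\hat p\}$ and $Q_{j-1}$ inside the range $\{j+1,\ldots,l\}$ on which Lemma~\ref{le:t_+} is available, together with the routine but essential use of conformality to exclude cancellations in the two difference vectors; choosing to exchange out of $x_{j-1}$ rather than out of the non-feasible point $x_{j-1}+s_j$ is the decisive idea.
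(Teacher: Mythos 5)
Your proof is correct and follows essentially the same strategy as the paper's: remove a duplicate index $\hat p$ with $s_{\hat p}=s_j$, apply (J-EXC) once to land at a point of the form $x_{j-1}+s_j+u$ with $u\in\{s_p\mid p\in P_{j-1}\setminus\{\hat p\}\}\cup\{t_q\mid q\in Q_{j-1}\}$, and contradict Lemma~\ref{le:t_+} using $P_{j-1}\setminus\{\hat p\}\subseteq\{j+1,\ldots,l\}$ and $Q_{j-1}\subseteq T_-\cap\{j+1,\ldots,l\}$. The only (harmless) difference is the choice of exchange pair --- you use $(x_{j-1},y)$ with direction $s_j$, the paper uses $(x_j,y)$ with direction $-t_j$ --- but since $x_{j-1}+s_j=x_j-t_j$ both arrive at the same intermediate point and conclude identically; your variant even sidesteps having to check that $-t_j\in\inc(x_j,y)$.
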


\begin{proof}
 Assume, to the contrary, that 
$s_p = s_j$ for some $p \in P_{j-1}$.
 Let $p'=j$ if $j \in P_{j-1}$;
otherwise, $p' \in P_{j-1}$ be any integer with $s_{p'}=s_j$.
 Then, it holds that
\begin{align}
y- x_j 
& = -s_j - t_j + \sum_{p \in P_{j-1}}s_p + \sum_{q \in Q_{j-1}}t_q \notag\\
&=  - t_j + \sum_{p \in P_{j-1} \setminus \{p'\}}s_p + \sum_{q \in Q_{j-1}}t_q.
\label{eqn:lem:tj-in-T+:1}
\end{align}
 Note that $t_j \notin \{t_q\mid Q_{j-1}\}$ by \eqref{eqn:Qj-1:tq},
and therefore the equation above implies $-t_j \in \inc(x_j, y)$. 
 By (J-EXC) applied to $x_j$, $y$, and $-t_j \in \inc(x_j, y)$,
we have $x_j - t_j + w \in J$ for some
\begin{align*}
w & \in  \inc(x_j - t_j, y) \cup \{\textbf{0}\} 
 =
 \{s_p \mid p \in P_{j-1}\setminus \{p'\}\} 
\cup \{t_q \mid q \in Q_{j-1}\}\cup \{\textbf{0}\},
\end{align*}
where the equation is by \eqref{eqn:lem:tj-in-T+:1}.
 We have
\[
 \{s_p \mid p \in P_{j-1}\setminus \{p'\}\}  \subseteq \{s_{p} \mid j+1 \le p \le l\}
\]
by \eqref{eqn:PQ_j-1_assumption-2} and the choice of $p'$.
 This inclusion and \eqref{eqn:Qj-1:tq-2} imply that
\[
 w \in
\{s_{p} \mid j+1 \le p \le l\}
 \cup \{t_{q} \mid q\in T_-,\ j+1 \le q \le l\}\cup \{\textbf{0}\},
\]
a contradiction to Lemma \ref{le:t_+}
since $x_j - t_j + w \in J$.
 Hence, we have $s_p \ne s_j$ for every $p \in P_{j-1}$.
\end{proof}

\begin{lem}
\label{lem:y-sj-w-notinJ} 
 Let $j \in T_+$. 
Then, we have 
\begin{align}
& y + s_j  \notin J,\notag\\
& y + s_j - s_{p} \notin J \quad 
(p \in P_{j-1}),
\label{eqn:lem:y-sj-w-notinJ:1} 
\\
& y + s_j - t_{q} \notin J \quad
(q\in Q_{j-1}).
\label{eqn:lem:y-sj-w-notinJ:2} 
\end{align}
\end{lem}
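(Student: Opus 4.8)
The plan is to prove the three non‑membership assertions together, following the pattern of the proofs of Lemmas \ref{le:t_+} and \ref{lem:tj-in-T+}: one combines (J-EXC) with the gradient estimates of Proposition \ref{pr:sep}(i) and Lemma \ref{le:gradient3}.

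First I would record two refinements of the conformality relation \eqref{eqn:conformal} available in this situation. By Lemma \ref{lem:tj-in-T+} one has $j \notin P_{j-1}$, so $P_{j-1} \subseteq \{j+1,\dots,l\}$, and by \eqref{eqn:Qj-1:tq-2} one has $Q_{j-1} \subseteq T_- \cap \{j+1,\dots,l\}$. Hence the computation in the proof of \eqref{eq:le:t_+} applies verbatim to each $p \in P_{j-1}$ and yields the \emph{strict} inequality $f(x_{j-1}+s_j+s_p) < f(x_{j-1}+s_j+t_j)$; were $s_p = t_j$ the two sides would coincide, so $s_p \neq t_j$ for every $p \in P_{j-1}$, and likewise $t_q \neq t_j$ for every $q \in Q_{j-1}$ by \eqref{eqn:Qj-1:tq}. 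Together with Lemma \ref{lem:tj-in-T+} ($s_p \neq s_j$), these facts pin down which of the unit vectors $s_j$, $t_j$, $\{s_p\}$, $\{t_q\}$ can act on any given coordinate; in particular, for any vector $w$ obtained from $x_j - t_j$ by adding unit vectors taken (with multiplicity) from $\{s_p : p \in P_{j-1}\} \cup \{t_q : q \in Q_{j-1}\}$, each $-s_p$ and $-t_q$ that was actually used lies in $\inc(w,x_j)$, while $t_j \in \inc(w-s_p,x_j)$.

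With these preparations I would prove, by induction on $|P|+|Q|$, the stronger claim that $(x_j - t_j) + \sum_{p \in P}s_p + \sum_{q \in Q}t_q \notin J$ for all $P \subseteq P_{j-1}$ and $Q \subseteq Q_{j-1}$; since $y + s_j = (x_j - t_j) + \sum_{p\in P_{j-1}}s_p + \sum_{q\in Q_{j-1}}t_q$, the three assertions of the lemma are the two largest cases. The base case $|P|+|Q|\le 1$ is exactly Lemma \ref{le:t_+}. For the inductive step, suppose the displayed vector $w$ lies in $J$; picking an index, say $p\in P$, I would apply (J-EXC) to $w$, $x_j$, and $-s_p \in \inc(w,x_j)$. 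Since $w - s_p \notin J$ by the induction hypothesis, (J-EXC) returns a nonzero exchange vector $w'$ with $w - s_p + w' \in J$; if $w' \in \{-s_{p'}\}\cup\{-t_{q'}\}$ we land on a strictly smaller vector of the prescribed form, contradicting the induction hypothesis.

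The remaining possibility, $w' = t_j$, is the crux: it only produces $x_j + (\text{a smaller combination}) \in J$, which is not absurd on its face. To dispose of it I would run a second (J-EXC) chase from that vector toward $x_{j-1}$, using that subtracting $t_j$ from it lands again on a vector of the prescribed form, hence outside $J$ by the induction hypothesis, and invoking the gradient estimates to exclude the exchange vector $-s_j$ — the only outcome of the secondary chase that would fail to produce yet another smaller prescribed‑form vector in $J$. Keeping the collision bookkeeping consistent and certifying that this secondary chase terminates in a contradiction, rather than cycling between $x_{j-1}$‑ and $x_j$‑based vectors, is where I expect the real work to lie.
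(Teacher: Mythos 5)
Your proposal has a genuine gap, and you identify its location yourself: the case $w'=t_j$ in your inductive step (and the analogous case $w''=-s_j$ in the ``secondary chase'') is never actually closed. After the first (J-EXC) application you land on a vector of the form $x_j+\sum_{p\in P\setminus\{p_0\}}s_p+\sum_{q\in Q}t_q\in J$, which is based at $x_j$ rather than at $x_j-t_j$ and hence is not covered by your induction hypothesis; the secondary chase toward $x_{j-1}$ then produces, in its bad case, a vector based at $x_{j-1}$, for which no non-membership statement is available either (Lemma~\ref{le:y1-2} is based at $x+s^*$, not at $x_{j-1}$). So the argument as written does not terminate in a contradiction, and the ``real work'' you defer is precisely the content of the lemma. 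Your preliminary observations ($s_p\neq s_j$ by Lemma~\ref{lem:tj-in-T+}, $t_q\neq t_j$ by \eqref{eqn:Qj-1:tq}, and $s_p\neq t_j$ via the strict inequality in the proof of \eqref{eq:le:t_+}) are correct and are indeed what the paper uses, but they do not rescue the induction.

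The missing idea is to run the exchange in the opposite direction. Assume $y+s_j\in J$ (or $y+s_j-s_{p'}$, or $y+s_j-t_{q'}$). Since $(y+s_j)-x_j=-t_j+\sum_{p\in P_{j-1}}s_p+\sum_{q\in Q_{j-1}}t_q$ and $t_j$ acts on a coordinate disjoint from all the $s_p$ and $t_q$ involved, we have $-t_j\in\inc(x_j,\,y+s_j)$. Apply (J-EXC) with base point $x_j$, target the assumed member of $J$, and first step $-t_j$: since $x_j-t_j=x_{j-1}+s_j\notin J$, the axiom forces a second step $w\in\inc(x_j-t_j,\,y+s_j)\subseteq\{s_p\mid p\in P_{j-1}\}\cup\{t_q\mid q\in Q_{j-1}\}$ with $x_j-t_j+w\in J$. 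By Lemma~\ref{lem:tj-in-T+} and \eqref{eqn:Qj-1:tq-2} every such $w$ is one of the vectors $s_p$ $(j+1\le p\le l)$ or $t_q$ $(q\in T_-,\ j+1\le q\le l)$, so $x_j-t_j+w\in J$ contradicts Lemma~\ref{le:t_+} immediately. This one-shot argument handles all three assertions (and, incidentally, would prove your ``stronger claim'' for every $P\subseteq P_{j-1}$, $Q\subseteq Q_{j-1}$ without any induction), because Lemma~\ref{le:t_+} already supplies all the non-membership facts needed after exactly two steps from $x_j$.
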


\begin{proof}
 We prove $y + s_j \notin J$ by contradiction.
 Assume, to the contrary, that $y + s_j \in J$.
 It holds that
\begin{align}
(y+s_j) - x_j 
& = - t_j + \sum_{p \in P_{j-1}}s_p + \sum_{q \in Q_{j-1}}t_q.
\label{eqn:lem:tj-in-T+:3}
\end{align}
 This equation and \eqref{eqn:Qj-1:tq} imply $-t_j \in \inc(x_j, y+s_j)$.
 By (J-EXC) applied to $x_j$, $y+s_j$, and $-t_j \in \inc(x_j, y+s_j)$,
we have $x_j - t_j + w \in J$ for some
\[
 w \in \inc(x_j - t_j, y+s_j) \cup \{\textbf{0}\} 
=
 \{s_p \mid p \in P_{j-1}\} \cup \{t_q \mid q \in Q_{j-1}\} \cup \{\textbf{0}\}, 
\]
where the equation is by \eqref{eqn:lem:tj-in-T+:3}.
  Since $s_j \notin \{s_p \mid p \in P_{j-1}\}$
by Lemma \ref{lem:tj-in-T+}, we have
\[
 \{s_p \mid p \in P_{j-1}\} \subseteq \{s_{p} \mid j+1 \le p \le l\}.
\]
 This inclusion and \eqref{eqn:Qj-1:tq-2} imply that
\[
 w \in
\{s_{p} \mid j+1 \le p \le l\}
 \cup \{t_{q} \mid q\in T_-,\ j+1 \le q \le l\}\cup \{\textbf{0}\},
\]
a contradiction to Lemma \ref{le:t_+}
since $x_j - t_j + w \in J$.
 Hence, $y + s_j \notin J$ follows.

 We then prove \eqref{eqn:lem:y-sj-w-notinJ:1}.
 Assume, to the contrary, that 
$y + s_j - s_{p'} \in J$ holds for some $p' \in P_{j-1}$.
 It holds that
\begin{align}
(y + s_j - s_{p'})- x_j 
& = -s_{p'} - t_j + \sum_{p \in P_{j-1}}s_p + \sum_{q \in Q_{j-1}}t_q \notag\\
&=  - t_j + \sum_{p \in P_{j-1} \setminus \{p'\}}s_p + \sum_{q \in Q_{j-1}}t_q.
\end{align}
 Note that $t_j \notin \{t_q\mid Q_{j-1}\}$ by \eqref{eqn:Qj-1:tq},
and therefore the equation above implies $-t_j \in \inc(x_j, y + s_j - s_{p'})$. 
 By (J-EXC) applied to $x_j$, $y + s_j - s_{p'}$, 
and $-t_j \in \inc(x_j, y + s_j - s_{p'})$,
we have $x_j - t_j + w \in J$ for some
\begin{align*}
w & \in  \inc(x_j - t_j, y + s_j - s_{p'}) \cup \{\textbf{0}\}\\
 & =
 \{s_p \mid p \in P_{j-1}\setminus \{p'\}\}
\cup \{t_q \mid q \in Q_{j-1}\}\cup \{\textbf{0}\}.
\end{align*}
  Since $s_j \notin \{s_p \mid p \in P_{j-1}\}$
by Lemma \ref{lem:tj-in-T+}, we have
\[
 \{s_p \mid p \in P_{j-1}\} \subseteq \{s_{p} \mid j+1 \le p \le l\}.
\]
 This inclusion and \eqref{eqn:Qj-1:tq-2} imply that
\[
 w \in
\{s_{p} \mid j+1 \le p \le l\}
 \cup \{t_{q} \mid q\in T_-,\ j+1 \le q \le l\}\cup \{\textbf{0}\},
\]
a contradiction to Lemma \ref{le:t_+}
since $x_j - t_j + w \in J$.
 Hence, \eqref{eqn:lem:y-sj-w-notinJ:1} holds.

 We omit the proof of \eqref{eqn:lem:y-sj-w-notinJ:2}
since it can be done in a similar way.
\end{proof}

\begin{lem}
 If $j \in T_+$, then $y + s_j + t_j \in J$ holds.
\end{lem}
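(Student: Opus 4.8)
The plan is to invoke the exchange axiom (J-EXC) for the pair $y, x_j \in J$, starting from the increment $s_j$, and then to use the three non-membership statements of Lemma \ref{lem:y-sj-w-notinJ} to rule out every outcome except the desired one.

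First I would verify that $s_j \in \inc(y, x_j)$. From $x_j - y = s_j + t_j - \sum_{p \in P_{j-1}} s_p - \sum_{q \in Q_{j-1}} t_q$, the conformality relation \eqref{eqn:conformal}, Lemma \ref{lem:tj-in-T+} (which gives $s_j \notin \{s_p \mid p \in P_{j-1}\}$, and $j\notin P_{j-1}$ in particular), and the observation that $t_q \ne s_j$ for every $q \in Q_{j-1}$ — for otherwise $y + s_j - t_q = y \in J$ would contradict \eqref{eqn:lem:y-sj-w-notinJ:2} — one sees that the coordinate carrying $s_j$ receives a strictly positive contribution in $x_j - y$, so indeed $s_j \in \inc(y, x_j)$. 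Since $y + s_j \notin J$ by Lemma \ref{lem:y-sj-w-notinJ}, applying (J-EXC) to $y$, $x_j$, and $s_j$ yields a vector $w \in \inc(y + s_j, x_j)$ with $y + s_j + w \in J$.

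Next I would pin down $\inc(y + s_j, x_j)$. Because $x_j - (y + s_j) = t_j - \sum_{p \in P_{j-1}} s_p - \sum_{q \in Q_{j-1}} t_q$ and, by \eqref{eqn:Qj-1:tq}, $t_j \ne t_q$ for every $q \in Q_{j-1}$, every positive-direction unit vector of this difference is either $t_j$ itself or is created by one of the terms $-s_p$ or $-t_q$; hence $\inc(y + s_j, x_j) \subseteq \{t_j\} \cup \{-s_p \mid p \in P_{j-1}\} \cup \{-t_q \mid q \in Q_{j-1}\}$. So $w$ falls into one of three cases. If $w = -s_p$ for some $p \in P_{j-1}$, then $y + s_j - s_p \in J$, contradicting \eqref{eqn:lem:y-sj-w-notinJ:1}; if $w = -t_q$ for some $q \in Q_{j-1}$, then $y + s_j - t_q \in J$, contradicting \eqref{eqn:lem:y-sj-w-notinJ:2}. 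The only surviving possibility is $w = t_j$, which gives $y + s_j + t_j \in J$ and thereby completes both this lemma and, with it, the proof of Lemma \ref{lem:key-lemma-proofofTh5} in the case $j \in T_+$.

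The whole argument is preparatory: the real content sits in Lemmas \ref{le:t_+}, \ref{lem:tj-in-T+}, and \ref{lem:y-sj-w-notinJ}. The one point that demands care is the bookkeeping of the increment sets $\inc(y, x_j)$ and $\inc(y + s_j, x_j)$ — specifically, making sure that conformality \eqref{eqn:conformal}, together with $s_j\notin\{s_p\mid p\in P_{j-1}\}$ and the disjointness facts \eqref{eqn:Qj-1:tq} and Lemma \ref{lem:tj-in-T+}, leaves $t_j$ as the only new direction available after the failed step $s_j$. Once that is settled, the conclusion follows with no further estimate and no appeal to convexity.
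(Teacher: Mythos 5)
Your proof is correct and follows essentially the same route as the paper: show $s_j \in \inc(y, x_j)$, apply (J-EXC) to the pair $(y, x_j)$ with increment $s_j$, and use Lemmas \ref{lem:tj-in-T+} and \ref{lem:y-sj-w-notinJ} to eliminate every candidate for $w$ except $t_j$. Your extra observation ruling out $t_q = s_j$ via \eqref{eqn:lem:y-sj-w-notinJ:2} is a careful touch that the paper leaves implicit, but it does not change the argument.
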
 

\begin{proof}
  It holds that
 \begin{align}
 y- x_j 
 & = -s_j - t_j + \sum_{p \in P_{j-1}}s_p + \sum_{q \in Q_{j-1}}t_q ,
 \label{eqn:y-xj}
 \end{align}
 which, together with $s_j \notin \{s_p \mid p \in P_{j-1}\}$,
 implies that $s_j \in \inc(y, x_j)$.
 By (J-EXC) applied to $y$, $x_j$, and $s_j \in \inc(y, x_j)$,
 we have $y + s_j + w \in J$ for some
 \begin{align*}
  w & \in \inc(y+s_j, x_j) \cup \{\textbf{0}\} \\
 &  =
 \{t_j\} \cup
 \{-s_p \mid p \in P_{j-1}\} \cup \{-t_q \mid q \in Q_{j-1}\} \cup \{\textbf{0}\},
 \end{align*}
 where the equality is by \eqref{eqn:y-xj}.
 Since $y + s_j + w \in J$, 
 Lemmas \ref{lem:tj-in-T+} and \ref{lem:y-sj-w-notinJ} 
 imply that 
$w = t_j$, i.e.,
 $y + s_j + t_j \in J$ holds. 
\end{proof}

 This concludes the proof of Lemma \ref{lem:key-lemma-proofofTh5}
in the case of $j \in T_+$.

\section*{Acknowledgement}
The author thanks Akiyoshi Shioura for careful reading and numerous helpful comments.
This work was supported by JSPS KAKENHI Grant Number 21K21290.

\appendix
\renewcommand*{\thesection}{\Alph{section}}

\section*{Appendix}

\section{Proof of Theorem \ref{th:ex_min_cut}}
\label{sec:ap}

Let $x \in J$ be a vector 
that is not an optimal solution of {\rm (JSC)},
and $s^{*} \in \univec$ be a vector satisfying \eqref{eq:descent_s-2}.
 We show that 
there exists some $x^* \in M^*(x)$ such that $s^* \in \inc(x, x^*)$.
 The proof given below is a minor modification of the one 
for  \cite[Theorem 4.2]{Shioura07}.

 We assume, without loss of generality, that $s^* = -\chi_i$ for some $h \in N$.
Then, $s^* \in \inc(x, x^*)$ is equivalent to the inequality $x^*(i) \le x(i) - 1$.
 Assume, to the contrary, that $x^*(i) > x(i) - 1$ holds for every $x^* \in M^*(x)$.
Let $x^* \in M^*(x)$ be an optimal solution of (JSC) minimizing the value $x^*(i)$
among all such vectors.

By the definition of $s^*$, there exists $t^* \in U \cup \{ \textbf{0} \}$
such that 
\begin{equation}
\label{eq:ap_1}
x + s^* + t^* \in J, \quad f(x + s^* + t^*) < f(x).
\end{equation}
 Note that $s^*+t^* \ne \textbf{0}$ by $f(x + s^* + t^*) < f(x)$.

In the proof we use the following lemmas.

\begin{lem}
\label{le:ap_1}
There exists $u \in \inc(x^*, x)$ such that $f(x^* + u) > f(x^*)$
and $f(x - u) < f(x + s^*)$
\end{lem}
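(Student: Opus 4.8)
Since $x$ is not optimal, $x\neq x^{*}$ and hence $\inc(x^{*},x)\neq\emptyset$, so there is at least one candidate for $u$. The two conditions pull in opposite directions: $f(x^{*}+u)>f(x^{*})$ asks that stepping out of the optimum $x^{*}$ raises $f$, while $f(x-u)<f(x+s^{*})$ asks that the reverse step into $x$ beats the greedy choice $s^{*}=-\chi_{i}$. The plan is to isolate what each condition forces and then to produce a single $u$ meeting both, by running a controlled exchange in $J$ between $x^{*}$ and $x$ and transferring function increments across the two endpoints with the separable convexity of $f$ (Proposition~\ref{pr:sep}).

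For the first condition I would first record: whenever $u\in\inc(x^{*},x)$ satisfies $x^{*}+u\in J$, optimality of $x^{*}$ gives $f(x^{*}+u)\ge f(x^{*})$, and equality is impossible, since then $x^{*}+u$ would be an optimal solution with $\|(x^{*}+u)-x\|_{1}=\mu(x)-1<\mu(x)$, contradicting the definition \eqref{eqn:def-mu} of $\mu(x)$; hence $f(x^{*}+u)>f(x^{*})$ for every such $u$. It therefore suffices to find $u$ that lands in $J$ from $x^{*}$ and whose reverse step beats $s^{*}$. For the second condition, note that $f(x+s^{*})<f(x)$ by Lemma~\ref{le:x+s'}, so if some $u\in\inc(x^{*},x)$ had $x-u\in J$ together with $f(x-u)<f(x+s^{*})$, then $-u$ would qualify as a direction $s$ in \eqref{eq:descent_s-2} --- take $t=\mathbf{0}$, using $f(x-u)<f(x)$ --- forcing $f(x+s^{*})\le f(x-u)$, a contradiction. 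Thus the $u$ we seek necessarily has $x-u\notin J$, and its existence cannot be read off from (J-EXC) alone; it must be produced together with the value inequality.

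To build such a $u$ I would start from the feasible vector $w:=x+s^{*}+t^{*}\in J$, which has $f(w)<f(x)$ and satisfies $w(i)\le x(i)-1\le x^{*}(i)-1$, and apply (J-EXC) repeatedly to generate a walk $x^{*}=z_{0},z_{1},\ldots,z_{m}=x$ inside $J$ whose consecutive differences lie in $\inc(z_{j-1},x)$ --- so that no cancellation of unit vectors occurs --- while choosing the entering unit vectors so as to control the function increments, in the manner of Lemma~\ref{lem:x-sequence} together with Theorem~\ref{th:monotone-s*}. The initial step $z_{0}\to z_{1}=x^{*}+u$ then automatically yields $f(x^{*}+u)>f(x^{*})$ since $z_{1}\in J$, and for $f(x-u)<f(x+s^{*})$ I would use Proposition~\ref{pr:sep}(i) to slide the unit vector $u$ from the $x^{*}$-end of the walk to the $x$-end, combining this with the monotonicity of the greedy increments along the walk and with the fact that $s^{*}=-\chi_{i}$ is globally greedy at $x$, in the spirit of the arguments of Section~\ref{sec:proof_lemmas}.

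The main obstacle is precisely this transfer: $u$ must be chosen so that both ends cooperate --- landing in $J$ when applied at $x^{*}$ (first condition) yet leaving $J$ when removed from $x$ (forced by the second condition) --- and the separable-convexity bookkeeping has to relate a step taken at a coordinate $k$ at the $x^{*}$-end both to the reverse step at $x$ and to the fixed step $s^{*}=-\chi_{i}$. Finally, since the argument tacitly uses that $x^{*}$ is a \emph{nearest} optimum, the case in which the optimal set of (JSC) is strictly larger than $M^{*}(x)$ should be handled, exactly as in Section~\ref{sec:proofs}, by first replacing $f$ by the perturbed function $f_{\varepsilon}(y)=f(y)+\varepsilon\|y-x\|_{1}$ with $\varepsilon>0$ sufficiently small; I would make that reduction at the very beginning.
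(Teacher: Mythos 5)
Your preliminary observations are sound: if $x^*+u\in J$ then optimality plus nearestness force $f(x^*+u)>f(x^*)$, and any $u$ satisfying the second inequality must have $x-u\notin J$, since otherwise $-u$ would beat $s^*$ in \eqref{eq:descent_s-2}. But the construction you propose to actually produce $u$ has a genuine gap. A jump system does not in general admit a unit-step walk $x^*=z_0,z_1,\dots,z_m=x$ lying entirely in $J$: the axiom (J-EXC) only guarantees steps of the form $s$ or $s+t$, and it may well happen that $x^*+u\notin J$ for \emph{every} $u\in\inc(x^*,x)$. So the ``initial step $z_0\to z_1=x^*+u\in J$'' from which you read off the first inequality need not exist. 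Moreover, you explicitly leave the transfer of the second inequality (your ``main obstacle'') unresolved, and that transfer is exactly the content of the lemma.

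The paper's proof avoids both difficulties by never requiring $x^*+u\in J$ or $x-u\in J$. It first shows $x^*+s^*\notin J$ (from $f(x^*+s^*)-f(x^*)\le f(x+s^*)-f(x)<0$, by Proposition~\ref{pr:sep}(i) and Lemma~\ref{le:x+s'}, together with optimality of $x^*$), then applies (J-EXC) \emph{once} to the pair $(x^*,\,x+s^*+t^*)$ in the direction $s^*$ to obtain $u$ with $x^*+s^*+u\in J$, hence $f(x^*+s^*+u)>f(x^*)$. After ruling out $u=s^*$ and $u=t^*$ (so that $u\in\inc(x^*,x)$), the identity $f(x^*+s^*+u)-f(x^*)=\{f(x^*+s^*)-f(x^*)\}+\{f(x^*+u)-f(x^*)\}$ from Proposition~\ref{pr:sep}(ii) forces $f(x^*+u)-f(x^*)>f(x^*)-f(x^*+s^*)>0$, and one more application of Proposition~\ref{pr:sep}(i) in the direction $u\in\inc(x^*,x)$ yields $f(x-u)-f(x)\le f(x^*)-f(x^*+u)<f(x^*+s^*)-f(x^*)\le f(x+s^*)-f(x)$. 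This single-exchange argument is what your sketch is missing; the multi-step walk and the appeal to Theorem~\ref{th:monotone-s*} are not needed, and the perturbation reduction is likewise unnecessary here, since the lemma is already stated for the specific $x^*\in M^*(x)$ fixed in the surrounding proof.
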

\begin{proof}
Since $s^* \in \inc(x^*, x + s^*)$, Proposition \ref{pr:sep} (i)
and Lemma \ref{le:x+s'} imply
\begin{equation}
\label{eq:ap_2}
    f(x^* + s^*) - f(x^*) \le f(x + s^*) - f(x) < 0,
\end{equation}
which, together with $x^* \in M^*(x)$, yields $x^* + s^* \notin J$.
Since $s^* \in \inc(x^*, x + s^* + t^*)$, (J-EXC) implies that 
there exists $u \in \inc(x^* + s^*, x+ s^* + t^*)$ such that $x^* + s^* + u \in J$.
Since $x^* \in M^*(x)$, we have
\begin{equation}
\label{eq:ap_3}
    f(x^* + s^* + u) > f(x^*).
\end{equation}
To prove the Lemma \ref{le:ap_1}, we consider the following claim.
\begin{cla}
\label{cl:ap}
    It holds that $u \neq s^*$.
\end{cla}
\noindent [{\rm \bf Proof of Claim}] \quad 
Assume, to the contrary, that $u = s^*$.
We consider the following two cases and derive a contradiction.

\medskip

\noindent
\textbf{Case 1:} $s^* = t^*$

Separable convexity of $f$,
the inequality $x^*(i) \ge x(i)$, and \eqref{eq:ap_1} imply
\[
    f(x^* + 2s^*) - f(x^*) \le f(x + 2s^*) - f(x) < 0,
\]
which contradicts the inequality \eqref{eq:ap_3}.

\medskip

\noindent
\textbf{Case 2:} $s^* \ne t^*$

The inequality \eqref{eq:ap_3} implies $f(x^* + 2s^*) > f(x^*)$,
from which follows
\begin{equation}
\label{eq:ap_4}
    f(x^* + 2s^*) - f(x^* + s^*) \ge (1/2) \{ f(x^* + 2s^*) - f(x^*) \} > 0.
\end{equation}
Since $s^* = u \in \inc(x^* + s^*, x + s^* + t^*)$, Proposition \ref{pr:sep} (i) implies
\begin{equation}
\label{eq:ap_5}
    f(x + s^* + t^*) - f(x + t^*) \ge f(x^* + 2s^*) - f(x^* + s^*).
\end{equation}
Since $f(x + s^* + t^*) - f(x + t^*) = f(x + s^*) - f(x)$ by Proposition \ref{pr:sep} (ii),
it follows from \eqref{eq:ap_4} and \eqref{eq:ap_5} that $f(x + s^*) > f(x)$,
a contradiction to Lemma~\ref{le:x+s'}. \quad [{\rm \bf End of Claim}]

\medskip

We first show that $u \in \inc(x^*, x)$.
Assume, to the contrary, that $u \notin \inc(x^*, x)$.
Since $u \in \inc(x^* + s^*, x + s^* + t^*) = \inc(x^*, x + t^*)$,
we have $u = t^*$.
Then, $t^* \neq s^*$ by Claim \ref{cl:ap}.
Therefore, Proposition \ref{pr:sep} (ii) implies
\begin{align*}
f(x^* + s^* + u) - f(x^*) &= f(x^* + s^* + t^*) - f(x^*)\\
			&= \{f(x^* + s^*) - f(x^*)\} + \{f(x^* + t^*) - f(x^*)\}\\
			&\le \{f(x + s^*) - f(x)\} + \{f(x + t^*) - f(x)\}\\
			&= f(x + s^* + t^*) - f(x^*) < 0,
\end{align*}
where the first inequality is by $s^* \in \inc(x^*, x+s^*)$ and $t^* = u \in \inc(x^*, x + t^*)$,
and the second inequality by \eqref{eq:ap_1}.
This, however, is a contradiction to \eqref{eq:ap_3}.

We then show that $f(x^* + u) > f(x^*)$ and $f(x - u) < f(x + s^*)$.
By \eqref{eq:ap_3}, Proposition \ref{pr:sep} (ii), and Claim \ref{cl:ap}, 
we have
\begin{equation}
\label{eq:ap_6}
    0 < f(x^* + s^* + u) - f(x^*) = \{f(x^* + s^*) - f(x^*)\} + \{f(x^* + u) - f(x^*)\}.
\end{equation}
Therefore, it holds that
\begin{align*}
f(x - u) - f(x) &\le f(x^*) - f(x^* + u)\\
			&< f(x^* + s^*) - f(x^*)\\
			&\le f(x + s^*) - f(x^*) < 0,
\end{align*}
where the first inequality is by Proposition \ref{pr:sep} (i) and $u \in \inc(x^*, x)$,
the second by \eqref{eq:ap_6}, and the last two inequalities are by \eqref{eq:ap_2}.
This implies $f(x^* + u) > f(x^*)$ and $f(x - u) < f(x + s^*)$.
\end{proof}

Let $u_1 \in \inc(x^*, x)$ be a vector with $f(x^* + u_1) > f(x^*)$ minimizing the value
$f(x - u_1)$ among all such vectors.
It follows from Lemma \ref{le:x+s'} and \ref{le:ap_1} that
\begin{equation}
\label{eq:ap_7}
    f(x - u_1) < f(x + s^*) < f(x),
\end{equation}
which implies $x - u_1 \notin J$ by \eqref{eq:descent_s-2}.
Hence, (J-EXC) implies that there exists $v \in \inc(x - u_1, x^*)$
such that $x - u_1 + v \in J$.
By \eqref{eq:descent_s-2} and \eqref{eq:ap_7}, we have
\begin{equation}
\label{eq:ap_8}
    f(x - u_1 + v) \ge f(x).
\end{equation}

\begin{lem}
\label{le:ap_2}
    $v \ne -u_1$.
\end{lem}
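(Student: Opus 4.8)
The plan is to argue by contradiction. Suppose $v = -u_1$; I want to derive a contradiction from the convexity of the relevant one-variable function together with the data accumulated so far.

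First I would reduce everything to a single coordinate. Write $u_1 = \sigma\chi_k$ with $k \in N$ and $\sigma \in \{+1,-1\}$, and put $g = f_k$, $\alpha = x(k)$, $\beta = x^*(k)$. Each vector appearing below ($x$, $x-u_1$, $x-2u_1$ on one hand, $x^*$, $x^*+u_1$ on the other) differs from $x$, resp.\ from $x^*$, only in the $k$-th coordinate, so by separability only $g$ is relevant. Then: \eqref{eq:ap_7} gives $g(\alpha-\sigma) < g(\alpha)$; \eqref{eq:ap_8}, using $v=-u_1$ so that $x-u_1+v = x-2u_1$, gives $g(\alpha-2\sigma) \ge g(\alpha)$; and the defining property of $u_1$ gives $g(\beta+\sigma) > g(\beta)$. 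Finally, combining $u_1 \in \inc(x^*,x)$ with $-u_1 = v \in \inc(x-u_1,x^*)$ yields $\norm{(x-2u_1)-x^*}_1 = \norm{x-x^*}_1 - 2$, which in the $k$-th coordinate says precisely $\sigma(\alpha-\beta)\ge 2$; equivalently, starting at $\beta$ and stepping in the direction $\sigma$ one reaches $\alpha-2\sigma$ after a nonnegative number of steps.

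Now I would invoke convexity of $g$ in the form that the forward difference $D(m) := g(m+\sigma)-g(m)$ satisfies $D(m+\sigma) \ge D(m)$ for all $m$. From $g(\alpha-\sigma) < g(\alpha) \le g(\alpha-2\sigma)$ I get $D(\alpha-2\sigma) = g(\alpha-\sigma)-g(\alpha-2\sigma) < 0$, while $g(\beta+\sigma) > g(\beta)$ gives $D(\beta) > 0$. Since $\beta$ precedes $\alpha-2\sigma$ in the direction $\sigma$, monotonicity of $D$ forces $D(\alpha-2\sigma) \ge D(\beta) > 0$, contradicting $D(\alpha-2\sigma) < 0$. Hence $v \ne -u_1$.

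The step I expect to be the crux is extracting the coordinatewise inequality $\sigma(\alpha-\beta)\ge 2$ from the two incidence relations $-u_1 \in \inc(x,x^*)$ and $-u_1 \in \inc(x-u_1,x^*)$: it is this that places $\beta$ on the correct side of $\alpha-2\sigma$ so that $D(\beta)$ and $D(\alpha-2\sigma)$ become comparable; without it the convexity estimate is empty. Everything else is routine sign-bookkeeping, which is cleanest if one works throughout with the direction $\sigma$ rather than splitting into the cases $\sigma=+1$ and $\sigma=-1$.
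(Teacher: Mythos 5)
Your proof is correct and is essentially the paper's argument, just unwound coordinatewise: your monotone forward-difference inequality $D(\alpha-2\sigma)\ge D(\beta)$, justified by extracting $\sigma(\alpha-\beta)\ge 2$ from $-u_1=v\in\inc(x-u_1,x^*)$, is exactly the paper's application of Proposition \ref{pr:sep}(i) giving $f(x^*)-f(x^*+u_1)\ge f(x-2u_1)-f(x-u_1)$, and the remaining inputs (\eqref{eq:ap_7}, \eqref{eq:ap_8}, and the defining property $f(x^*+u_1)>f(x^*)$ of $u_1$) are used in the same way. No gaps.
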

\begin{proof}
Assume, to the contrary, that $v = -u_1$.
Since $-u_1 = v \in \inc(x - u_1, x^*)$, Proposition \ref{pr:sep} (i) implies
\begin{equation}
\label{eq:ap_9}
    f(x^*) - f(x^* + u_1) \ge f(x - 2u_1) - f(x - u_1).
\end{equation}
By \eqref{eq:ap_7} and \eqref{eq:ap_8}, we have
\begin{equation}
\label{eq:ap_10}
    f(x - 2u_1) - f(x - u_1) \ge f(x) - f(x - u_1) > 0.
\end{equation}
It follows from \eqref{eq:ap_9} and \eqref{eq:ap_10} that $f(x^*) > f(x^* + u_1)$,
a contradiction to the choice of $u_1$.
\end{proof}

Since $v \in \inc (x - u_1, x^*) \subseteq \inc(x, x^*)$, 
it follows from Proposition \ref{pr:sep} (i) that
\begin{equation}
\label{eq:ap_11}
    f(x^*) - f(x^* - v) \ge f(x + v) - f(x).
\end{equation}
By Proposition \ref{pr:sep} (ii), \eqref{eq:ap_8} and Lemma \ref{le:ap_2}, we have
\begin{equation}
\label{eq:ap_12}
    f(x + v) - f(x) \ge f(x) - f(x - u_1).
\end{equation}
It follows from \eqref{eq:ap_7}, \eqref{eq:ap_11}, and \eqref{eq:ap_12} that
\begin{equation}
\label{eq:ap_13}
    f(x^*) - f(x^* - v) \ge f(x) - f(x - u_1) > 0.
\end{equation}
From this inequality we have $x^* - v \notin J$ since $x^* \in M^*(x)$.
Hence, (J-EXC) implies that there exists $u_2 \in \inc(x^* - v, x)$ such that 
$x^* - v + u_2 \in J$.
We note that $(x^* - v + u_2)(i) \le x^*(i)$ since $-s^* \notin \{ -v, u_2 \}$
and that $\norm{(x^* - v + u_2) - x}_1 < \norm{x^* - x}_1$.
Therefore, we have
\begin{equation}
\label{eq:ap_14}
    f(x^* - v + u_2) > f(x^*)
\end{equation}
by the choice of $x^*$.

\begin{lem}
\label{le:ap_3}
    $u_2 \ne -v$.
\end{lem}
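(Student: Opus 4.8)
The plan is to argue by contradiction, in exactly the same spirit as the proof of Lemma~\ref{le:ap_2}, which is the ``dual'' statement obtained by interchanging the roles of $x, u_1$ and $x^*, v$. So I would assume $u_2 = -v$ and work toward a contradiction with the chain \eqref{eq:ap_7}, namely $f(x - u_1) < f(x + s^*) < f(x)$; the target intermediate inequality is $f(x+v) < f(x)$, since this will clash with \eqref{eq:ap_12}.

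First, since we are assuming $u_2 = -v$ and $u_2 \in \inc(x^* - v, x)$ was already established, Proposition~\ref{pr:sep}(i) applies to $x^* - v$, $x$, and the increment $u_2 = -v$; using $x^* - v + u_2 = x^* - 2v$ and $x - u_2 = x + v$ this gives
\[
 f(x^* - v) + f(x) \ \ge\ f(x^* - 2v) + f(x + v),
 \qquad\text{i.e.,}\qquad
 f(x) - f(x + v) \ \ge\ f(x^* - 2v) - f(x^* - v).
\]
Next I would show the right-hand side is positive: \eqref{eq:ap_13} gives $f(x^* - v) < f(x^*)$, while \eqref{eq:ap_14} (with $u_2 = -v$) gives $f(x^* - 2v) = f(x^* - v + u_2) > f(x^*)$, so $f(x^* - 2v) - f(x^* - v) > 0$ and hence $f(x + v) < f(x)$. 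Finally, combining $f(x+v) - f(x) < 0$ with the previously proved inequality \eqref{eq:ap_12}, $f(x + v) - f(x) \ge f(x) - f(x - u_1)$, yields $f(x - u_1) > f(x)$, contradicting \eqref{eq:ap_7}. Therefore $u_2 \ne -v$.

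There is no substantial obstacle here: the heavy lifting was done in Lemma~\ref{le:ap_1} and in the inequalities \eqref{eq:ap_12}, \eqref{eq:ap_13}, \eqref{eq:ap_14} already derived. The only points needing care are (i) applying Proposition~\ref{pr:sep}(i) in the right direction so that the ``gradient'' inequality lands on $f(x) - f(x+v)$, and (ii) noting that, in contrast to the proof of Lemma~\ref{le:ap_2}, no appeal to the separable convexity of $f$ is required, because $f(x^* - 2v) > f(x^*) > f(x^* - v)$ already makes the relevant difference strictly positive.
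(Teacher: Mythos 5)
Your proof is correct and follows essentially the same route as the paper: assume $u_2=-v$, apply Proposition~\ref{pr:sep}(i) to get $f(x)-f(x+v)\ge f(x^*-2v)-f(x^*-v)>0$ (positivity from \eqref{eq:ap_13} and \eqref{eq:ap_14}), and contradict the consequence $f(x+v)-f(x)\ge f(x)-f(x-u_1)>0$ of \eqref{eq:ap_7} and \eqref{eq:ap_12}. No issues.
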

\begin{proof}
Assume, to the contrary, that $u_2 = -v$.
Since $-v = u_2 \in \inc(x^* - v, x)$,
Proposition \ref{pr:sep} (i) implies
\begin{equation}
\label{eq:ap_15}
    f(x) - f(x + v) \ge f(x^* - 2v) - f(x^* - v) > 0,
\end{equation}
where the last inequality is by \eqref{eq:ap_13} and \eqref{eq:ap_14}.
On the other hand, \eqref{eq:ap_7} and \eqref{eq:ap_12} imply
\[
    f(x + v) - f(x) \ge f(x) - f(x - u_1) > 0.
\]
This inequality, however, is a contradiction to \eqref{eq:ap_15}.
\end{proof}

By Proposition \ref{pr:sep} (ii), \eqref{eq:ap_14} and Lemma \ref{le:ap_3}, we have
\begin{equation}
\label{eq:ap_16}
    f(x^* + u_2) - f(x^*) > f(x^*) - f(x^* - v).
\end{equation}
Since $u_2 \in \inc(x^* - v, x) \subseteq \inc(x^*, x)$, it follows from Proposition \ref{pr:sep} (i) that 
\[
    f(x) - f(x - u_2) > f(x^* + u_2) - f(x^*),
\]
which, together with \eqref{eq:ap_13} and \eqref{eq:ap_16}, implies $f(x^* + u_2) > f(x^*)$
and $f(x - u_2) < f(x - u_1)$, a contradiction to the choice of $u_1$.

This concludes the proof of Theorem \ref{th:ex_min_cut}.

\bibliographystyle{plain}

\end{document}